\numberwithin{equation}{section} 
\let\C\relax
\newcommand{\coloneqq}{:=}
\newenvironment{pdeq}{ \left\{ \begin{aligned}}{\end{aligned}\right.}
\newcommand{\np}[1]{(#1)}
\newcommand{\nb}[1]{[#1]}
\newcommand{\bp}[1]{\big(#1\big)}
\newcommand{\bb}[1]{\big[#1\big]}
\newcommand{\Bp}[1]{\bigg(#1\bigg)}
\newcommand{\Bb}[1]{\bigg[#1\bigg]}
\newcommand{\cala}{{\mathcal A}}
\newcommand{\calb}{{\mathcal B}}
\newcommand{\calf}{{\mathcal F}}
\newcommand{\calh}{{\mathcal H}}
\newcommand{\calm}{{\mathcal M}}
\newcommand{\caln}{{\mathcal N}}
\newcommand{\calp}{{\mathcal P}}
\newcommand{\calt}{{\mathcal T}}
\newcommand{\R}{\mathbb{R}}
\newcommand{\C}{\mathbb{C}}
\newcommand{\Z}{\mathbb{Z}}
\newcommand{\N}{\mathbb{N}}
\DeclareMathOperator{\e}{e}
\DeclareMathOperator{\Div}{div}
\DeclareMathOperator{\supp}{supp}
\DeclareMathOperator*{\esssup}{ess\,sup}
\DeclareMathOperator{\curl}{curl}
\DeclareMathOperator{\impart}{Im}
\newcommand{\set}[1]{\ensuremath{\{#1\}}}
\newcommand{\setc}[2]{\ensuremath{\{#1\ \vert\ #2\}}}
\newcommand{\setcl}[2]{\ensuremath{\bigl\{#1\ \big\vert\ #2\bigr\}}}
\newcommand{\ball}{\mathrm{B}}
\renewcommand{\restriction}[2]{#1 | _{#2}}
\newcommand{\proj}{\calp}
\newcommand{\projcompl}{\calp_\bot}
\newcommand{\grp}{G}
\newcommand{\dualgrp}{\widehat{G}}
\newcommand{\torus}{{\mathbb T}}
\newcommand{\idmatrix}{I}
\newcommand{\grad}{\nabla}
\newcommand{\D}{{\mathrm D}}
\newcommand{\dx}{{\mathrm d}x}
\newcommand{\dtx}{{\mathrm d}(t,x)}
\newcommand{\dsy}{{\mathrm d}(s,y)}
\newcommand{\dtau}{{\mathrm d}\tau}
\newcommand{\ds}{{\mathrm d}s}
\newcommand{\dt}{{\mathrm d}t}
\newcommand{\dy}{{\mathrm d}y}
\newcommand{\dS}{{\mathrm d}S}
\newcommand{\dxi}{{\mathrm d}\xi}
\newcommand{\SR}{\mathscr{S}}
\newcommand{\TDR}{\mathscr{S^\prime}}
\newcommand{\FT}{\mathscr{F}}
\newcommand{\iFT}{\mathscr{F}^{-1}}
\newcommand{\mmultiplier}{m}
\newcommand{\Mmultiplier}{M}
\newcommand{\norm}[1]{\lVert#1\rVert}
\newcommand{\norml}[1]{\big\lVert#1\big\rVert}
\newcommand{\snorm}[1]{{\lvert #1 \rvert}}
\newcommand{\snorml}[1]{{\bigl\lvert #1 \big\rvert}}
\newcommand{\snormL}[1]{{\Bigl\lvert #1 \Big\rvert}}
\newcommand{\velnorm}[1]{\mathrm{M}_\radius\np{#1}}
\newcommand{\vortnorm}[1]{\mathrm{N}_\radius^\varepsilon\np{#1}}
\newcommand{\LR}[1]{\mathrm{L}^{#1}}
\newcommand{\LRloc}[1]{\mathrm{L}^{#1}_{\mathrm{loc}}} 
\newcommand{\CR}[1]{\mathrm{C}^{#1}}  
\newcommand{\CRi}{\CR \infty}
\newcommand{\CRci}{\CR \infty_0}
\newcommand{\WSR}[2]{\mathrm{W}^{#1,#2}} 
\newcommand{\DSR}[2]{\mathrm{D}^{#1,#2}} 
\newcommand{\DSRN}[2]{\mathrm{D}^{#1,#2}_0}
\newcommand{\WSRloc}[2]{\mathrm{W}^{#1,#2}_{\mathrm{loc}}}
\newcommand{\velspace}{\calm_\radius}
\newcommand{\vortspace}{\caln_\radius^\varepsilon}
\newcommand{\DSRNsigma}[2]{\mathrm{D}^{#1,#2}_{0,\sigma}}
\newcommand{\CRcisigma}{\CR{\infty}_{0,\sigma}}
\newcommand{\vvel}{v}
\newcommand{\vpres}{p}
\newcommand{\Vvel}{V}
\newcommand{\wvel}{w}
\newcommand{\wpres}{\mathfrak{q}}
\newcommand{\zvel}{z}
\newcommand{\zvelss}{z_0}
\newcommand{\zvelpp}{z_\perp}
\newcommand{\uvel}{u}
\newcommand{\upres}{\mathfrak{p}}
\newcommand{\Uvel}{U}
\newcommand{\Upres}{\mathfrak{P}}
\newcommand{\tuvel}{\tilde{u}}
\newcommand{\wakefct}[1]{s(#1)}
\newcommand{\fundsolvel}{\varGamma^\rey}
\newcommand{\fundsolvelss}{\fundsolvel_0}
\newcommand{\fundsolvelssjl}{\fundsolvel_{0,j\ell}}
\newcommand{\fundsolvelpp}{\fundsolvel_\perp}
\newcommand{\fundsolvelppjl}{\fundsolvel_{\perp,j\ell}}
\newcommand{\fundsolvort}{\phi^\rey}
\newcommand{\fundsolvortss}{\fundsolvort_0}
\newcommand{\fundsolvortpp}{\fundsolvort_\perp}
\newcommand{\fundsolaux}{\varGamma_{\mathrm{H}}^{\perf k,\rey}}
\newcommand{\fundsolauxeta}{\varGamma_{\mathrm{H}}^{\eta,\rey}}
\newcommand{\fundsolauxetaperf}{\varGamma_{\mathrm{H}}^{\perf\eta,\rey}}
\newcommand{\tin}{\text{in }}
\newcommand{\ton}{\text{on }}
\newcommand{\tfor}{\text{for }}
\newcommand{\half}{\frac{1}{2}}
\newcommand{\fourth}{\frac{1}{4}}
\renewcommand{\epsilon}{\varepsilon}
\newcommand{\rey}{\lambda}
\newcommand{\per}{\calt}
\newcommand{\perf}{\frac{2\pi}{\per}}
\newcommand{\eone}{\e_1}
\newcommand{\ej}{\e_j}
\newcommand{\cutoff}{\chi}
\newcommand{\onedist}{1}
\newcommand{\change}[1]{}
\newcommand{\radius}{S}
\newcommand{\Kconst}{K}
\newcommand{\nonlin}{\cala}
\newcommand{\nonlinss}{\cala_0}
\newcommand{\nonlinpp}{\cala_\perp}
\newcommand{\nvec}{\mathrm{n}}
\newcommand{\Cexp}{C_4}
\newcommand{\Cexpfs}{C_3}
\newcommand{\CexpfsMult}{C_5}
\newcommand{\CVelocityDecay}{C_2}
\newcommand{\CVorticityDecay}{C_1}
\newcommand{\CHsvelnorm}{C_6}
\newcommand{\CHsvortnorm}{C_7}
\newcommand{\CFsvelnorm}{C_8}
\newcommand{\CFsvortnorm}{C_9}
\theoremstyle{plain}
\newtheorem{thm}{Theorem}[section]
\newtheorem{defn}[thm]{Definition}
\newtheorem{lem}[thm]{Lemma}
\newtheorem{prop}[thm]{Proposition}
\newtheorem{cor}[thm]{Corollary}
\theoremstyle{remark}
\newtheorem{rem}[thm]{Remark}
\begin{document}
%%%%%%%%%%%%%%%%%%%%%%%%%%%%%%%%%%%%%%%%%%%%%%%%%%%%%%%%%%%%%%
%%          Title, author, date, abstract, etc.             %%
%%%%%%%%%%%%%%%%%%%%%%%%%%%%%%%%%%%%%%%%%%%%%%%%%%%%%%%%%%%%%%
\title{Spatial decay of the vorticity field of time-periodic viscous flow past a body}

\author{Thomas Eiter%
\thanks{Partially supported by Deutsche Forschungsgemeinschaft 
(DFG, German Research Foundation), 
project number: 427538878} 
\and 
Giovanni P. Galdi}

%\date{}
\maketitle

\begin{abstract}
We study the asymptotic spatial behavior of the vorticity field, $\omega(x,t)$, associated to a 
time-periodic Navier--Stokes flow past a body, $\mathscr B$, in the class of weak solutions satisfying a Serrin-like condition. 
We  show that, outside the wake region, $\mathcal R$,
$\omega$ decays pointwise at an  exponential rate, uniformly in time. Moreover, denoting by   $\bar{\omega}$  its time-average over a period and by $\omega_P:=\omega-\bar{\omega}$ its purely periodic component, we prove that inside $\mathcal R$, $\bar{\omega}$ has the same algebraic decay 
as that known for the associated steady-state problem, whereas $\omega_P$ decays even faster, uniformly in time. 
This implies, in particular, that ``sufficiently far" from $\mathscr B$, $\omega(x,t)$ behaves like the vorticity field of the corresponding steady-state problem.
\end{abstract}

\noindent\textbf{MSC2020:}   35Q30, 35B10, 76D05, 35E05.
\\
\noindent\textbf{Keywords:} Navier-Stokes, time-periodic solutions, vorticity field, 
fundamental solution, asymptotic behavior.

%%%%%%%%%%%%%%%%%%%%%%%%%%%%%%%%%%%%%%%%%%%%%%%%%%%%%%%%%%%%%%
%%          Main document                                   %%
%%%%%%%%%%%%%%%%%%%%%%%%%%%%%%%%%%%%%%%%%%%%%%%%%%%%%%%%%%%%%%
\section{Introduction}

Consider a (rigid) body, $\mathscr B$,  translating with constant nonzero  velocity, $\vvel_\infty$, in a viscous (Navier-Stokes) liquid, $\mathscr L$, that occupies the whole space outside $\mathscr B$. Without loss of generality, we  assume that $\vvel_\infty$ is directed along the positive $x_1$-axis, namely,
$\vvel_\infty=\rey\eone$ with $\rey>0$. We also assume that $\mathscr L$ is subject to a body force  and a distribution of boundary velocity, both being time-periodic of period $\mathcal T$. 
Then, the time-periodic dynamics of the liquid around the body 
are governed by the
following set of equations
\begin{equation}\label{sys:NavierStokesTP_ExteriorDomain}
\begin{pdeq}
\partial_t\uvel-\Delta\uvel-\rey\partial_1\uvel+\uvel\cdot\grad\uvel+\grad\upres&=f
&&\tin\torus\times\Omega, \\
\Div\uvel&=0 
&&\tin\torus\times\Omega, \\
\uvel&=\uvel_\ast 
&&\ton\torus\times\partial\Omega, \\
\lim_{\snorm{x}\to\infty}\uvel(t,x)&=0 
&&\tfor t\in\torus,
\end{pdeq}
\end{equation}
where $\Omega\coloneqq\R^3\setminus\mathscr B$ is the domain occupied by the liquid.
Moreover,  $\uvel\colon\torus\times\Omega\to\R^3$ and $\upres\colon\torus\times\Omega\to\R$ are velocity and pressure fields 
of the liquid,
$f\colon\torus\times\Omega\to\R^3$ is the external body force,
and $\uvel_\ast\colon\torus\times\partial\Omega\to\R^3$
the velocity field at the boundary.
The time-axis is given by the torus group 
$\torus\coloneqq\R/\per\Z$, which 
ensures that all functions appearing in \eqref{sys:NavierStokesTP_ExteriorDomain} 
are time-periodic with a prescribed period $\per>0$. 
Note that for
a body at rest, that is, for $\rey=0$, the mathematical and physical characteristics of the flow 
are very different from those for $\rey\neq0$. For this issue, we refer the reader to the recent papers \cite{galdi:zeroaverage,galdi:transarbitrary}.
\par
Existence, uniqueness and spatial asymptotic behavior of solutions  to \eqref{sys:NavierStokesTP_ExteriorDomain} have been the object of several recent researches  \cite{GaldiKyed_TPflowViscLiquidpBody,GaldiKyed_TPSolNS3D_AsymptoticProfile,Eiter_SpatiallyAsymptoticStructureTPNS_2020}. In particular, under suitable assumptions on the data, these results  provide sharp pointwise {\em algebraic} decays for the velocity field and its first spatial derivatives; see \cite{Eiter_SpatiallyAsymptoticStructureTPNS_2020} and Theorem \ref{thm:VelocityDecay} below. However, as  suggested by physical grounds, the vorticity field $\omega:=\curl u$  is expected to decay at an {\em exponential} rate, at least outside the ``wake region" behind $\mathscr B$. It is just to this question that the present paper is devoted.  
%This can also be inferred by classical results on the steady-state 
\par
More precisely, we shall study  the  asymptotic behavior of 
the  vorticity field $\curl\uvel(x,t)$ for $\snorm{x}\to\infty$, uniformly in time.
In these regards, we recall that in the case of a steady-state flow, that is, when $\np{\vvel,\vpres}$ is a time-independent
solution to \eqref{sys:NavierStokesTP_ExteriorDomain},
a famous result of \textsc{Clark} \cite{Clark_VorticityAtInfinityNavierStokes_1970}
and \textsc{Babenko} and \textsc{Vasil'ev} \cite{BabenkoVasilev_AsymptoticBehaviorSteadyFlow_1973}
shows that for $\snorm{x}$ sufficiently large one has
\begin{equation}\label{est:Curlu_SteadyStateCase}
\snorm{\curl\vvel(x)}\leq C \snorm{x}^{-3/2}\e^{-\alpha \wakefct{x}}
\end{equation}
for some constants $C,\,\alpha>0$,
where 
\[
\wakefct{x}\coloneqq\snorm{x}+x_1.
\]
In particular, this reflects the anisotropic behavior of the fluid flow and
translates, in mathematical terms, the presence of a ``wake region" behind $\mathscr B$. 
Estimate \eqref{est:Curlu_SteadyStateCase} 
implies that the vorticity, $\curl\vvel$, decays exponentially fast on rays 
$\setc{x\in\R^3}{x_1=\theta\snorm{x}}$ for $\theta\in(-1,1]$,
while inside parabolic regions $\setc{x\in\R^3}{\wakefct{x}\leq \beta}$, $\beta>0$,
estimate \eqref{est:Curlu_SteadyStateCase} merely yields an algebraic decay rate.
Since time-independent solutions are trivially also time-periodic, 
one would expect a similar behavior in the time-periodic case.
As a matter of fact, we show that this is indeed true and that 
the vorticity field associated to a time-periodic flow is subject to an analogus estimate.

Actually, as proved  in \cite{Eiter_SpatiallyAsymptoticStructureTPNS_2020},
 if we split $\uvel$ into 
its time average $\vvel$ and a purely periodic part $\wvel:=\uvel-\vvel$,
then the decay rates of $\vvel$ and $\grad\vvel$ are much slower than those of $\wvel$ and $\grad\wvel$.
Thus, also in the problem at hand, it seems reasonable to derive separate pointwise estimates for the two parts
$\curl\vvel$ and $\curl\wvel$ of the vorticity $\curl\uvel$.
In doing so, we are indeed able to show that
the time-independent part $\vvel$ satisfies \eqref{est:Curlu_SteadyStateCase}
whereas the other part obeys the estimate
\begin{equation}\label{est:Curlu_PurelyPeriodic}
\snorm{\curl\wvel(t,x)}\leq C \snorm{x}^{-9/2}\e^{-\alpha \wakefct{x}}
\end{equation}
for all  sufficiently large $\snorm{x}$, 
and therefore decays faster.
It is worth emphasizing that we establish this result for {\em any} weak solution to \eqref{sys:NavierStokesTP_ExteriorDomain} (see Definition \ref{def:WeakSolution_NStp}), whose purely periodic part {\em only} satisfies the Serrin-like condition \eqref{el:VelocityAdditionalIntegrability},  provided the data are sufficiently smooth with $f$ of bounded spatial support; see Theorem \ref{thm:VorticityDecay}.

A main tool in our approach is the introduction of a time-periodic fundamental solution
associated to the vorticity field $\curl\uvel$.
The concept of time-periodic fundamental solutions in the field of fluid dynamics is new
and was recently introduced by \textsc{Kyed} \cite{Kyed_FundsolTPStokes2016}
and \textsc{Galdi} and \textsc{Kyed} \cite{GaldiKyed_TPSolNS3D_AsymptoticProfile}
in the case of a three-dimensional Navier--Stokes flow,
and further extended by \textsc{Eiter} and \textsc{Kyed} \cite{EiterKyed_etpfslns}
to the general $n$-dimensional case.
The fundamental solution $\fundsolvel$ introduced there consists of the fundamental solution $\fundsolvelss$
to the steady-state problem and a second so-called purely periodic part $\fundsolvelpp$. 
Analogously, we define the time-periodic vorticity fundamental solution $\fundsolvort$ 
as the sum 
of the corresponding steady-state fundamental solution $\fundsolvortss$ 
and a purely periodic part $\fundsolvortpp$.

After introducing these time-periodic fundamental solutions,
our procedure parallels that of \cite{DeuringGaldi_ExpDecayVorticity_2016},
where \textsc{Deuring} and \textsc{Galdi} studied the 
vorticity field associated to the steady-state flow past a rotating body.
Note that this problem is directly related to 
the one investigated here 
since a time-independent solution in the frame attached to the rotating body
corresponds to a time-periodic solution in the inertial frame.
By means of the above time-periodic fundamental solutions we deduce representation formulas
for $\uvel$ and $\curl\uvel$,
which enable us to express 
$\uvel$ as a fixed point of a nonlinear map $F_\radius$ of convolution type; see eq. \eqref{eq:uFixedPoint_VortDecay}.
We then 
establish the existence of a fixed point $\zvel=F_\radius(\zvel)$ of this map
in a class of functions such that $\curl\zvel$ decays in the expected way; see Corollary \ref{cor:ExistenceFixedPoint} .
Successively, we show that this fixed point is, in fact, unique in the {\em larger} class of functions
that merely satisfy the pointwise estimates of $\uvel$ and $\grad\uvel$ established in 
\cite{Eiter_SpatiallyAsymptoticStructureTPNS_2020}; see Theorem \ref{thm:VorticityDecay_WholeSpace}.
Since $\uvel$ is a fixed point of $F_\radius$ by construction, 
we thus conclude $\uvel=\zvel$ and that $\uvel=\vvel+\wvel$ satisfies 
\eqref{est:Curlu_SteadyStateCase} and \eqref{est:Curlu_PurelyPeriodic}.
Observe that, in order to employ the contraction mapping principle,
the existence of the fixed point $\zvel$ is established
in a class of functions that satisfy a slightly weaker estimate
than that given in \eqref{est:Curlu_PurelyPeriodic}.
However, by another application of the representation formulas
via the vorticity fundamental solution, we finally
obtain the asserted decay rates \eqref{est:Curlu_SteadyStateCase} and \eqref{est:Curlu_PurelyPeriodic}.
The result just described is proved in the case where $\Omega$ is the whole space $\R^3$. However, we show that it can be readily 
transferred to the case of an exterior domain 
by a classical cut-off argument, provided $u_*$ and $f$ are sufficiently smooth, with $u_*$ having zero total net flux at $\partial\Omega$. We leave it as an open question whether this condition can indeed be removed.

Finally, we observe that some of the intermediate results are contained in the first author's PhD thesis \cite{Eiter_Diss}. However, they were derived
under the stringent assumption that both external force $f$ and solution $\uvel$ are of class $\CRi$. In contrast,  here we merely require summability assumptions on
$f$ and $\uvel$ (see \eqref{cond:forcing} and \eqref{el:VelocityAdditionalIntegrability}) which
represents a rather significant improvement

The paper is structured as follows.
After introducing the basic notation in Section \ref{sec:Notation},
we present our main result on the decay of the vorticity field in Section \ref{sec:MainResults}.
In Section \ref{sec:TPFundSol} we recall the notion of a time-periodic fundamental solution to the Navier--Stokes equations 
and introduce the concept of a time-periodic vorticity fundamental solution.
Section \ref{sec:Regularity} is dedicated to the study of regularity
of weak solutions to the time-periodic Navier--Stokes problem.
The introduced fundamental solutions 
are employed in Section \ref{sec:FixedPointProblem}
in order to conclude a suitable fixed-point equation.
After the derivation of appropriate estimates for the terms in this equation in Section \ref{sec:Estimates},
we finish the proof of the main result in Section \ref{sec:ConclusionProof}.

\section{Notation}
\label{sec:Notation}

Points in $\torus\times\Omega$ for $\Omega\subset\R^3$ are usually denoted by $(t,x)$ 
and consist of a time variable $t\in\torus$
and a spatial variable $x\in\Omega$.
For a sufficiently regular function $\uvel\colon\torus\times\Omega\to\R^3$ 
we write $\partial_j\uvel\coloneqq\partial_{x_j}\uvel$,
and we set $\Delta\uvel\coloneqq\partial_j\partial_j\uvel$ 
and $\Div\uvel\coloneqq\partial_j\uvel_j$.
Here we employ Einstein's summation convention, 
which we do frequently in the following.
By $\delta_{jk}$ and $\varepsilon_{jk\ell}$ we denote
the Kronecker delta and the Levi-Civita symbol, respectively.

For $R>0$ and $x\in\R^3$ we set $\ball_R(x)\coloneqq\setc{y\in\R^3}{\snorm{x-y}<R}$ and
$\ball^R(x)\coloneqq\setc{y\in\R^3}{\snorm{x-y}>R}$,
and in the case $x=0$ we write $\ball_R\coloneqq\ball_R(0)$ and $\ball^R\coloneqq\ball^R(0)$.
Moreover, for $R>r>0$ we set $\ball_{r,R}\coloneqq\ball_R\cap\ball^r$.
For vectors $a,b\in\R^3$ their vector product $a\wedge b$ and their tensor product $a\otimes b$ 
are given by $(a\wedge b)_{j}=\varepsilon_{jk\ell}a_k b_\ell$ and
$(a\otimes b)_{jk}=a_jb_k$, respectively.
Moreover, we call a subset $\Omega\subset\R^3$ an exterior domain, 
if it is the complement of a non-empty compact subset of $\R^3$. 
Without loss of generality, we always assume that $0$ is contained 
in the interior of $\R^3\setminus\Omega$.

In order to include the time periodicity in the formulation
of the Navier--Stokes equations \eqref{sys:NavierStokesTP_ExteriorDomain},
we formulated the system
on $\torus\times\Omega$. 
In the case $\Omega=\R^3$, which plays a prominent role in our approach, 
the time-space domain is given by the locally compact Abelian group $\grp\coloneqq\torus\times\R^3$.
The dual group of $\grp$ can be identified with $\dualgrp=\Z\times\R^3$,
the elements of which we denote by $(k,\xi)\in\Z\times\R^3$.
We equip the group $\torus$ with the normalized Haar measure given by
\[
\forall f\in\CR{}(\torus):\qquad
\int_\torus f(t)\,\dt=\frac{1}{\per}\int_0^\per f(t)\,\dt,
\]
the group $\Z$ with the counting measure, 
and $\grp$ and $\dualgrp$ with the corresponding product measures.
The Fourier transform $\FT_\grp$ on $\grp$
and its inverse $\iFT_\grp$ are formally given by
\[
\begin{aligned}
\FT_\grp\nb{f}(k,\xi)
&\coloneqq\int_\torus\int_{\R^3} f(t,x)\e^{-i\perf kt-ix\cdot\xi}\,\dx\dt,\\
\iFT_\grp\nb{f}(t,x)
&\coloneqq\sum_{k\in\Z}\int_{\R^3} f(k,\xi)\e^{i\perf kt+ix\cdot\xi}\,\dxi.\\
\end{aligned}
\]
This defines an isomorphism $\FT_\grp\colon\SR(\grp)\to\SR(\dualgrp)$ with inverse $\iFT_\grp$,
provided that the Lebesgue measure $\dxi$ is normalized appropriately.
Here $\SR(\grp)$ is the so-called Schwartz--Bruhat space, which is a generalization 
of the classical Schwartz space in the Euclidean setting;
see \cite{Bruhat61,EiterKyed_tplinNS_PiFbook}.
By duality, this yields an isomorphism $\FT_\grp\colon\TDR(\grp)\to\TDR(\dualgrp)$
between the corresponding dual spaces $\TDR(\grp)$ and $\TDR(\dualgrp)$,
the spaces of tempered distributions.

For an open set $\Omega\subset\R^3$ or $\Omega\subset\torus\times\R^3$ and $q\in[1,\infty]$, $m\in\N$,
we denote the classical Lebesgue and Sobolev spaces  by $\LR{q}(\Omega)$
and $\WSR{m}{q}(\Omega)$, respectively.
Moreover, $\LRloc{1}(\Omega)$ is the set of all locally integrable functions,
and $\WSRloc{1}{1}(\Omega)$ is the subset of $\LRloc{1}(\Omega)$ with 
locally integrable weak derivatives.
For an open subset $\Omega\subset\R^3$,
homogeneous Sobolev spaces are denoted by 
\[
\DSR{m}{q}(\Omega)
\coloneqq\setcl{\uvel\in\LRloc{1}(\Omega)}{\grad^m \uvel\in\LR{q}(\Omega)}, 
\] 
where $\grad^m\uvel$ denotes the collection of all $m$-th 
weak derivatives of $\uvel$.
We further set
\[
\CRcisigma(\Omega)\coloneqq\setc{\varphi\in\CRci(\Omega)^3}{\Div\varphi=0},
\]
where $\CRci(\Omega)$ is the class of 
compactly supported smooth functions on $\Omega$.
For $q\in[1,\infty]$ and a (semi-)normed vector space $X$,
$\LR{q}(\torus;X)$ denotes the corresponding Bochner--Lebesgue space on $\torus$,
and
\[
\WSR{1,2}{q}(\torus\times\Omega)
\coloneqq
\setcl{\uvel\in\LR{q}(\torus;\WSR{2}{q}(\Omega))}
{\partial_t\uvel\in\LR{q}(\torus\times\Omega)}.
\]

We further define the projections
\[
\proj f (x)\coloneqq \int_\torus f(t,x)\,\dt, 
\qquad \projcompl f\coloneqq f-\proj f,
\] 
which decompose $f\in\LRloc{1}\np{\torus\times\Omega}$ 
into a time-independent \emph{steady-state} part $\proj f$
and a remainder \emph{purely periodic} part $\projcompl f$.
One readily sees that $\proj$ and $\projcompl$ are bounded operators on $\LR{q}(\torus\times\Omega)$ for all $q\in[1,\infty]$
and that
\[
\proj f = \iFT_\grp \bb{\delta_\Z(k) \FT_\grp\nb{f}}, 
\qquad
\projcompl f = \iFT_\grp \bb{\np{1-\delta_\Z(k)} \FT_\grp\nb{f}}, 
\]
where $\delta_\Z$ is the delta distribution on $\Z$.

The letter $C$ always denotes a generic positive constant, 
the value of which may change from line to line.
When we want to specify the dependence of the constant $C$ on quantities $a, b, \ldots$, 
we write $C(a, b, \ldots)$.

\section{Main result}
\label{sec:MainResults}
As emphasized earlier on, our focus  
is the pointwise estimates of the vorticity field $\curl \uvel$
associated to a solution $\np{\uvel,\upres}$ of
\eqref{sys:NavierStokesTP_ExteriorDomain}.
More precisely, we study the vorticity field of weak solutions 
to \eqref{sys:NavierStokesTP_ExteriorDomain}
defined as follows.

\begin{defn}\label{def:WeakSolution_NStp}
Let $f\in\LRloc{1}(\torus\times\Omega)^3$.
A function $\uvel\in\LRloc{1}(\torus\times\Omega)^3$ 
is called \emph{weak solution} to \eqref{sys:NavierStokesTP_ExteriorDomain}
if
\begin{enumerate}
\item[i.]
$\grad\uvel\in\LR{2}(\torus\times\Omega)^{3\times3}$, 
$\uvel\in\LR{2}(\torus;\LR{6}(\Omega))^3$,
$\Div\uvel=0$ in $\torus\times\Omega$,
$\uvel=\uvel_\ast$ on $\torus\times\partial\Omega$,
\label{item:WeakSolution_NStp_L2}
\item[ii.]
$\projcompl\uvel\in\LR{\infty}(\torus;\LR{2}(\Omega))^3$,
\label{item:WeakSolution_NStp_LInfty}
\item[iii.]
the identity
\[
\int_{\torus\times\Omega}\bb{-\uvel\cdot\partial_t\varphi
+\grad\uvel:\grad\varphi
-\rey\partial_1\uvel\cdot\varphi
+\np{\uvel\cdot\grad\uvel}\cdot\varphi}\,\dtx
=\int_{\torus\times\Omega} f\cdot\varphi\,\dtx
\]
holds for all test functions $\varphi\in\CRcisigma(\torus\times\Omega)$.
\label{item:WeakSolution_NStp_WeakFormulation}
\end{enumerate}
\end{defn}

Let us explain the choice of the functional class for weak solutions. 
When $\Omega=\R^3$, condition 
i.~is equivalent to
$\uvel\in\LR{2}(\torus;\DSRNsigma{1}{2}(\R^3))$,
where $\DSRNsigma{1}{2}(\R^3)$ is the closure 
of $\CRcisigma(\R^3)$ with respect to the homogeneous norm $\norm{\grad\cdot}_2$.
In this case, the class of solutions from Definition \ref{def:WeakSolution_NStp}
is the same as considered in \cite{GaldiKyed_TPSolNS3D_AsymptoticProfile}
and \cite{Eiter_SpatiallyAsymptoticStructureTPNS_2020},
where the asymptotic behavior of the velocity field $\uvel$ and its gradient $\grad\uvel$ 
was investigated.
Moreover, for any $f\in\LR{2}(\torus;\DSRN{-1}{2}(\R^3))^3$ 
the existence of a weak solution in the above sense 
was shown by \textsc{Kyed} \cite{Kyed_habil}.
Therefore, this class of solutions is a natural candidate
for further investigation of the associated vorticity field $\curl\uvel$. 

The goal of the present article is to prove the following result.

\begin{thm}\label{thm:VorticityDecay}
Let $\Omega\subset\R^3$ be an exterior domain
with boundary of class $\CR{2}$,
and let $\rey> 0$. 
Let $f$ and $\uvel_\ast$ be such that
\begin{align}
\forall q\in(1,\infty): \ f\in\LR{q}(\torus\times\Omega)^3, 
&& 
\supp f \text{ bounded}
\label{cond:forcing},
\\
\uvel_\ast\in\CR{}\np{\torus;\CR{2}(\partial\Omega)}^3
\cap\CR{1}\np{\torus;(\partial\Omega)}^3,
&&
\int_{\partial\Omega}\uvel_\ast\cdot\nvec\,\dS=0,
\label{cond:BoundaryData}
\end{align}
where $\nvec$ denotes the unit outer normal at $\partial\Omega$.
Let $\uvel$ be a
weak time-periodic solution to \eqref{sys:NavierStokesTP_ExteriorDomain}
in the sense of Definition \ref{def:WeakSolution_NStp}, which satisfies
\begin{equation}
\label{el:VelocityAdditionalIntegrability}
\exists r\in(5,\infty): \quad \projcompl\uvel\in\LR{r}(\torus\times\Omega)^3.
\end{equation}
Then there exist constants $\CVorticityDecay>0$ and 
$\alpha=\alpha(\rey,\per)>0$ such that
\begin{align}
\snorm{\curl\proj\uvel(x)}
&\leq \CVorticityDecay\snorm{x}^{-3/2}\e^{-\alpha\wakefct{x}}, 
\label{est:VorticityDecay_ss}\\
\snorm{\curl\projcompl\uvel(t,x)}
&\leq \CVorticityDecay\snorm{x}^{-9/2}\e^{-\alpha\wakefct{x}}
\label{est:VorticityDecay_pp}
\end{align}
for all $t\in\torus$ and $x\in\Omega$. 
\end{thm}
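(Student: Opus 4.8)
The plan is to reduce the exterior-domain statement to the whole-space case $\Omega=\R^3$ and there to construct the solution via a contraction argument using the time-periodic vorticity fundamental solution. First I would recall from \cite{Eiter_SpatiallyAsymptoticStructureTPNS_2020} (see Theorem~\ref{thm:VelocityDecay} below) that under \eqref{cond:forcing}--\eqref{el:VelocityAdditionalIntegrability} the solution $\uvel$ and its gradient $\grad\uvel$ already satisfy sharp pointwise algebraic decay estimates, with $\proj\uvel$ decaying like the steady-state flow and $\projcompl\uvel$ decaying faster. I would also establish in Section~\ref{sec:Regularity} that the Serrin-type condition \eqref{el:VelocityAdditionalIntegrability} plus smoothness of $f$ upgrades the weak solution to a strong (indeed classical away from $\partial\Omega$) solution, so that $\curl\uvel$ is well-defined pointwise and the representation formulas below may be applied. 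Then, since a time-independent solution in a rotating frame corresponds to a time-periodic one in the inertial frame, the scheme of \cite{DeuringGaldi_ExpDecayVorticity_2016} applies almost verbatim after replacing the rotation-related fundamental solution by the time-periodic fundamental solution of \cite{EiterKyed_etpfslns,GaldiKyed_TPSolNS3D_AsymptoticProfile} and its vorticity counterpart $\fundsolvort=\fundsolvortss+\fundsolvortpp$ introduced in Section~\ref{sec:TPFundSol}.

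The core of the argument is carried out for $\Omega=\R^3$. Using the time-periodic velocity fundamental solution $\fundsol=\fundsolss+\fundsolvelpp$ and the Oseen-type structure of \eqref{sys:NavierStokesTP_ExteriorDomain}, I would write $\uvel$ as a convolution $\uvel = \fundsol * (f - \uvel\cdot\grad\uvel)$, with a corresponding pressure term, localized away from $\mathscr B$ by a cut-off; this exhibits $\uvel$ as a fixed point $\uvel = F_\radius(\uvel)$ of a nonlinear map of convolution type (eq.~\eqref{eq:uFixedPoint_VortDecay}). Taking $\curl$ and splitting into steady-state and purely periodic parts, one gets $\curl\proj\uvel = \fundsolvortss * (\dots)$ and $\curl\projcompl\uvel = \fundsolvortpp * (\dots)$. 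In Section~\ref{sec:Estimates} I would prove the decisive convolution estimates: the steady-state vorticity kernel $\fundsolvortss$ obeys the Clark--Babenko--Vasil'ev bound \eqref{est:Curlu_SteadyStateCase}, while the purely periodic kernel $\fundsolvortpp$ decays faster — fast enough to yield \eqref{est:Curlu_PurelyPeriodic} (in fact one first proves a slightly weaker exponent and then bootstraps). Combining these kernel estimates with the known algebraic decay of $\uvel,\grad\uvel$ (which controls the data $f-\uvel\cdot\grad\uvel$, the forcing being compactly supported), and choosing $\radius$ large, the map $F_\radius$ is a contraction on a function class whose curl satisfies the anisotropic exponential bounds; Corollary~\ref{cor:ExistenceFixedPoint} gives a fixed point $\zvel$ there. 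A uniqueness argument in the larger class defined merely by the \cite{Eiter_SpatiallyAsymptoticStructureTPNS_2020} estimates (Theorem~\ref{thm:VorticityDecay_WholeSpace}) forces $\uvel=\zvel$, and a final application of the representation formula sharpens the periodic exponent from $-9/2+\varepsilon$ to $-9/2$, giving \eqref{est:VorticityDecay_ss}--\eqref{est:VorticityDecay_pp} for $\Omega=\R^3$.

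For a general exterior domain I would use a classical cut-off: fix $\cutoff\in\CRci(\R^3)$ equal to $1$ near $\mathscr B$, write $\uvel = \cutoff\uvel + (1-\cutoff)\uvel$, and note that $\twvel:=(1-\cutoff)\uvel - \bogopr[\dots]$, with $\bogopr$ a Bogovskii-type operator correcting the divergence produced by the cut-off, solves a whole-space problem \eqref{sys:NavierStokesTP_ExteriorDomain} on $\R^3$ with a new, compactly supported and sufficiently regular forcing; the hypothesis $\int_{\partial\Omega}\uvel_\ast\cdot\nvec\,\dS=0$ together with \eqref{cond:BoundaryData} is exactly what is needed for the Bogovskii correction to exist with the required summability, and the interior regularity of Section~\ref{sec:Regularity} guarantees the new data lie in the relevant spaces. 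The whole-space result then applies to $\twvel$, and since $\curl\uvel = \curl\twvel$ outside a bounded set, the bounds \eqref{est:VorticityDecay_ss}--\eqref{est:VorticityDecay_pp} transfer to $\uvel$ for $\snorm{x}$ large, hence (adjusting $\CVorticityDecay$) for all $x\in\Omega$.

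The main obstacle is the analysis of the purely periodic vorticity kernel $\fundsolvortpp$: one must show it decays strictly faster than $\fundsolvortss$, quantitatively with the exponent $-9/2$ and the same anisotropic exponential factor $\e^{-\alpha\wakefct{x}}$, uniformly in $t\in\torus$. This requires a careful Fourier-analytic study of $\FT_\grp[\fundsolvortpp]$ over the nonzero frequencies $k\neq0$, where the extra factor $i\perf k$ supplies the additional spatial decay but also complicates the oscillatory-integral estimates; controlling the resulting series in $k$ and tracking the dependence $\alpha=\alpha(\rey,\per)$ is the delicate part, and it is here that the bootstrap from a weaker to the sharp exponent becomes necessary. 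All other steps — the representation formulas, the contraction estimates once the kernel bounds are in hand, the regularity upgrade, and the cut-off reduction — are by now fairly standard in this circle of ideas.
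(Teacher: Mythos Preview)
Your proposal is correct and follows essentially the same route as the paper: reduction to the whole space via a cut-off plus Bogovski\u{\i} correction (using the zero-flux condition), regularity upgrade, representation via the time-periodic velocity and vorticity fundamental solutions, a contraction argument in a weighted space encoding the anisotropic exponential decay (first with exponent $-9/2+\varepsilon$), uniqueness in the larger class defined only by the algebraic bounds of Theorem~\ref{thm:VelocityDecay}, and a final bootstrap via \eqref{eq:RepresentationVorticity_CurluWedgeu_pp} to reach $-9/2$. One small refinement worth noting: in the paper the convolution representation and the fixed-point map are written with the nonlinearity in the form $\curl\uvel\wedge\uvel$ rather than $\uvel\cdot\grad\uvel$ (using $\uvel\cdot\grad\uvel=\tfrac12\grad\snorm{\uvel}^2+\curl\uvel\wedge\uvel$ and that $\fundsolvel$ annihilates gradients), which is what makes the vorticity seminorm $\vortnorm{\cdot}$ appear naturally in the contraction estimates.
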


\begin{rem}The constant $\CVorticityDecay$ depends on $\Omega,\lambda$ and on norms of the solution $u$ which, in turn, can be estimated in terms of the body force $f$. So, ultimately, $\CVorticityDecay$ depends on $\Omega,\lambda$ and $f$. 
If not specified otherwise, this may always be the case for all other constants $C$, $C_i$ that we will introduce throughout the paper.
\end{rem}

\begin{rem} In our proof, we need the zero-flux condition \eqref{cond:BoundaryData}$_4$ on the boundary velocity $u_*$, which, instead, is not needed in the particular case of steady-state solutions \cite{Clark_VorticityAtInfinityNavierStokes_1970,BabenkoVasilev_AsymptoticBehaviorSteadyFlow_1973}. Though it is probable that our result continues to hold if the flux is only ``sufficiently small," it is not clear whether the same conclusion may be drawn for flux of arbitrary magnitude. 

\end{rem}
\begin{rem}Condition \eqref{el:VelocityAdditionalIntegrability}
is merely a technical assumption. 
As pointed out in \cite{GaldiKyed_TPSolNS3D_AsymptoticProfile}
for the case $\Omega=\R^3$,
it leads to additional local regularity of the solution
but does not improve its spatial decay properties.
\end{rem}
\begin{rem} If  $f$ is time-independent, then $u\equiv\calp u$, and our result reduces to that of \textsc{Clark} \cite{Clark_VorticityAtInfinityNavierStokes_1970}
and \textsc{Babenko} and \textsc{Vasil'ev} \cite{BabenkoVasilev_AsymptoticBehaviorSteadyFlow_1973}. Actually --as it becomes clear from our proof--  in such a case, we do not need the assumptions \eqref{cond:BoundaryData}, and \eqref{el:VelocityAdditionalIntegrability}.
\end{rem}

\section{Time-periodic fundamental solutions}
\label{sec:TPFundSol}

In this section, we consider the so-called Oseen linearization of \eqref{sys:NavierStokesTP_ExteriorDomain} in the
whole space
given by
\begin{equation}\label{sys:NSlintp_TPFS}
\begin{pdeq}
\partial_t\uvel-\Delta\uvel - \rey\partial_1\uvel +\grad\upres &= f 
&& \tin\torus\times\R^3, \\
\Div\uvel &=0 && \tin\torus\times\R^3
\end{pdeq}
\end{equation}
for $\rey>0$.
In \cite{GaldiKyed_TPSolNS3D_AsymptoticProfile, EiterKyed_etpfslns},
a velocity fundamental solution $\fundsolvel$ to the time-periodic problem 
\eqref{sys:NSlintp_TPFS} was introduced such that
\[
\uvel=\fundsolvel\ast f
\]
with convolution taken with respect to the 
locally compact abelian group $\grp=\torus\times\R^3$.
It is given by 
\begin{equation}
\fundsolvel \coloneqq \fundsolvelss\otimes \onedist_{\torus} + \fundsolvelpp,
\label{eq:tpfundsol_decompVel}
\end{equation} 
where 
\begin{align}
&\fundsolvelss\colon\R^3\setminus\set{0}\to\R^{3\times3}, \quad 
\fundsolvelssjl(x)
\coloneqq\frac{1}{4\pi\rey}\bb{\delta_{j\ell}\Delta-\partial_j\partial_\ell}
\int_0^{\wakefct{\rey x} /2} \frac{1-\e^{-\tau}}{\tau}\,\dtau, 
\label{eq:OseenFundSolss_3d}\\
&\fundsolvelpp \coloneqq \iFT_\grp\Bb{ 
\frac{1-\delta_\Z(k)}{\snorm{\xi}^2 + i(\perf k - \rey \xi_1)}\,
\Bp{\idmatrix - \frac{\xi\otimes\xi}{\snorm{\xi}^2}}},
\label{eq:tpfundsol_deffundsolcompl}
\end{align}
the symbol $\onedist_{\torus}$ denotes the constant $1$ distribution, 
and 
$\wakefct{x}= \snorm{x}+x_1$ as above.
In particular, the fundamental solution $\fundsolvel$ decomposes into 
a \emph{steady-state} part $\fundsolvelss$ and a \emph{purely periodic} part $\fundsolvelpp$.
The steady-state part $\fundsolvelss$ is the fundamental solution to the steady-state Oseen problem
\begin{align}\label{sys:NSlinss_TPFS}
\begin{pdeq}
-\Delta\vvel - \rey\partial_1\vvel +\grad\vpres &= f && \tin\R^3, \\
\Div\vvel &=0 && \tin\R^3;
\end{pdeq}
\end{align}
see \cite[Section VII.3]{GaldiBookNew}. 
This function shows strongly anisotropic behavior, which is reflected in the pointwise estimates
\begin{equation}
\forall \alpha\in\N_0^3\ \forall \epsilon>0\ \exists C>0\ \forall \snorm{x}\geq \epsilon:\quad  
\snorm{\D_x^\alpha \fundsolvelss(x)} \leq  C\bb{\snorm{x}\np{1+\wakefct{\rey x}}}^{-1-\frac{\snorm{\alpha}}{2}};
\label{est:fundsolss_Decay}
\end{equation}
see \cite[Lemma 3.2]{Farwig_habil}.
For the purely periodic part $\fundsolvelpp$ one can show the estimates
\begin{equation}
\forall \alpha \in \N_0^3 \ 
\forall r\in [1,\infty)\ \forall\epsilon>0\ \exists C>0\ \forall \snorm{x}\geq \epsilon:\  
\norm{\D_x^\alpha \fundsolvelpp(\cdot,x)}_{\LR{r}(\torus)} \leq C\snorm{x}^{-3-\snorm{\alpha}};
\label{est:tpfundsol_ComplPointwiseEst}
\end{equation}
see \cite{EiterKyed_etpfslns}. 
Observe that estimate \eqref{est:tpfundsol_ComplPointwiseEst} does not have an anisotropic character
and that the purely periodic part $\fundsolvelpp$ decays faster than
the steady-state part $\fundsolvelss$.

In order to derive estimates of the solution $\uvel$ from those of the fundamental solution
$\fundsolvel$, one thus has to study convolutions of functions 
that satisfy pointwise estimates similar to those in 
\eqref{est:fundsolss_Decay} and \eqref{est:tpfundsol_ComplPointwiseEst}.
Convolutions of the first type were examined by 
\textsc{Farwig} \cite{Farwig_habil,FarwigOseenAnisotropicallyWeightedSob} 
in dimension $n=3$, 
and later by 
\textsc{Kra\v cmar}, \textsc{Novotn\'y} and \textsc{Pokorn\'y}
\cite{KracmarNovotnyPokorny2001} in the general $n$-dimensional case.
We collect some of their results in the following theorem,
which gives estimates of convolutions with $\fundsolvelss$ and $\grad\fundsolvelss$.

\begin{thm}\label{thm:ConvFundsolss}
Let $A\in[2,\infty)$ and $B\in[0,\infty)$, and let $g\in\LR{\infty}(\R^3)$ 
such that $\snorm{g(x)}\leq M\np{1+\snorm{x}}^{-A}\np{1+\wakefct{x}}^{-B}$. 
Then there exists a constant 
$ C= C(A,B,\rey)>0$
with the following properties:
\begin{enumerate}
\item
If $A+\min\set{1,B}>3$, then
\begin{equation}
\snorml{\snorm{\fundsolvelss}\ast g (x)} 
\leq  C M \bb{\np{1+\snorm{x}}\bp{1+\wakefct{\rey x}}}^{-1}.
\end{equation}
\item
If $A+\min\set{1,B}>3$ and $A+B\geq7/2$, then
\begin{equation}
\snorml{\snorm{\grad\fundsolvelss}\ast g (x)} 
\leq  C M \bb{\np{1+\snorm{x}}\bp{1+\wakefct{\rey x}}}^{-3/2}.
\end{equation}
\end{enumerate}
\end{thm}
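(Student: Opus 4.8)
The plan is to prove Theorem~\ref{thm:ConvFundsolss} by collecting the convolution estimates of Farwig~\cite{Farwig_habil,FarwigOseenAnisotropicallyWeightedSob} and Kra\v{c}mar--Novotn\'y--Pokorn\'y~\cite{KracmarNovotnyPokorny2001}, but since the problem asks for a self-contained sketch, I would organize it around the anisotropically weighted pointwise calculus for the Oseen kernel. First I would fix the notation $\eta_s(x) := (1+\snorm{x})^{-s}$ and $\mu_b(x):=(1+\wakefct{x})^{-b}$ for the isotropic and wake weights, and recall from \eqref{est:fundsolss_Decay} that $\snorm{\fundsolvelss(x)}\lesssim \eta_1(x)\mu_1(x)$ and $\snorm{\grad\fundsolvelss(x)}\lesssim \eta_{3/2}(x)\mu_{3/2}(x)$ (absorbing the factor $\rey$ into the constant and using $\wakefct{\rey x}\simeq \rey\wakefct{x}$). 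The hypothesis says $\snorm{g(x)}\leq M\,\eta_A(x)\mu_B(x)$. So in both cases we must estimate $(\eta_a\mu_b)\ast(\eta_A\mu_B)$ pointwise, with $(a,b)=(1,1)$ in part~(i) and $(a,b)=(3/2,3/2)$ in part~(ii), and show it is bounded by a constant times $\eta_1\mu_1$, respectively $\eta_{3/2}\mu_{3/2}$.

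The core of the argument is the \emph{convolution lemma for anisotropic weights}, which I would state as: if $a+\min\{1,b\}>3$, $A+\min\{1,B\}>3$, $a\le 1+A$-type compatibility holds, and the ``slower'' decay is subcritical, then $(\eta_a\mu_b)\ast(\eta_A\mu_B)\lesssim \eta_{\min\{a,A,\ldots\}}\mu_{\min\{b,B,\ldots\}}$ up to the appropriate saturation. Concretely I would decompose the integral $\int_{\R^3}\eta_a\mu_b(x-y)\,\eta_A\mu_B(y)\,\dy$ into three regions: (I) $\snorm{y}\le\tfrac12\snorm{x}$, where $\eta_a\mu_b(x-y)\simeq\eta_a\mu_b(x)$ may be pulled out, leaving $\int_{\snorm{y}\le\snorm{x}/2}\eta_A\mu_B(y)\,\dy$ which is $O(1)$ provided $A+\min\{1,B\}>3$ (this is where that hypothesis is used, and it gives the contribution $\lesssim M\,\eta_a\mu_b(x)$); (II) $\snorm{x-y}\le\tfrac12\snorm{x}$, symmetric, pulling out $\eta_A\mu_B(y)\simeq \eta_A\mu_B(x)$ — this is the term that produces $M\,\eta_A\mu_B(x)$, and since in part~(i) $A+\min\{1,B\}>3$ forces $\eta_A\mu_B$ to decay at least like $\eta_1\mu_1$ up to the saturation exponents, and in part~(ii) the extra assumption $A+B\ge 7/2$ is exactly what guarantees $\eta_A\mu_B(x)\lesssim \eta_{3/2}\mu_{3/2}(x)$ after accounting for the wake geometry; (III) the ``far'' region where both $\snorm{y}\ge\tfrac12\snorm{x}$ and $\snorm{x-y}\ge\tfrac12\snorm{x}$, handled by H\"older or by splitting further according to whether $y$ lies inside or outside the paraboloidal wake, using the elementary geometric inequalities $\wakefct{x}\le \wakefct{x-y}+\wakefct{y}$ and $\snorm{x}\le\snorm{x-y}+\snorm{y}$ to distribute the weights.

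For part~(ii) there is the additional subtlety that the gradient kernel $\grad\fundsolvelss$ is more singular at the origin ($\eta_{3/2}$ is not locally integrable times nothing — actually $\snorm{x}^{-3/2}$ \emph{is} locally integrable in $\R^3$, so no issue there) but decays faster, so region~(II) is the binding constraint; the inequality $A+B\ge 7/2$ together with $A+\min\{1,B\}>3$ is precisely the condition under which the self-convolution-type bound closes at the exponent $(3/2,3/2)$ without loss. I would verify the borderline cases: when $B\ge 1$ the wake weight in region~(I) integrates to a constant as soon as $A>2$, and when $B<1$ one needs the full strength $A+B>3$; the stated hypothesis $A\in[2,\infty)$, $B\in[0,\infty)$ with $A+\min\{1,B\}>3$ covers both uniformly.

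The main obstacle, and the step I would spend the most care on, is region~(III): controlling the far-field integral where neither argument of the convolution is comparable to $x$. Here one cannot simply pull out a weight, and a naive application of H\"older loses the anisotropy. The correct tool is the change of variables adapted to parabolic coordinates (as in Farwig's lemma) or, equivalently, the observation that on the set $\{\wakefct{y}\ge c\snorm{x}^{1/2}\}$ the wake weight $\mu_B(y)$ already beats any polynomial in $\snorm{x}$, while on the complementary thin parabolic neighborhood of the $x_1$-axis the volume is small enough ($O(\snorm{x}^{1/2}\cdot\snorm{x})$ for the relevant slab) to compensate. Balancing these two contributions against each other — and checking that the exponent bookkeeping yields exactly $\eta_1\mu_1$ in case~(i) and $\eta_{3/2}\mu_{3/2}$ in case~(ii), rather than something weaker — is the technical heart of the proof, and is where I would invoke the detailed estimates of \cite[Lemma~3.2 and the convolution lemmas]{Farwig_habil} and \cite{KracmarNovotnyPokorny2001} rather than redo the parabolic-coordinate computation from scratch.
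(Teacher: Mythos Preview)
Your proposal is correct and aligns with the paper's approach: the paper's own proof is a one-line citation to \cite[Theorems 3.1 and 3.2]{KracmarNovotnyPokorny2001}, and you ultimately defer to the same references (together with \cite{Farwig_habil}) for the technical heart, namely your region~(III). Your heuristic regional decomposition is more than the paper provides and is broadly on target; the one point to flag is that the wake-weight comparability $\mu_b(x-y)\simeq\mu_b(x)$ on region~(I) is not as automatic as you suggest, since $\wakefct{\cdot}$ is not translation-comparable in the naive sense, but since you explicitly invoke the cited references for the detailed parabolic-coordinate bookkeeping, this is not a gap in your argument.
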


\begin{proof}
These are special cases of \cite[Theorems 3.1 and 3.2]{KracmarNovotnyPokorny2001}.
\end{proof}

An analogous result for convolutions with $\fundsolvelpp$ and $\grad\fundsolvelpp$
was derived in \cite{Eiter_SpatiallyAsymptoticStructureTPNS_2020}.

\begin{thm}\label{thm:ConvFundsolpp}
Let $A\in\R$ and $g\in\LR{\infty}(\torus\times\R^3)$ 
such that $\snorm{g(t,x)}\leq M\np{1+\snorm{x}}^{-A}$.
Then for any $\varepsilon>0$ there exists a constant
$C= C(A,\rey,\per,\varepsilon)>0$
with the following properties:
\begin{enumerate}
\item
If $A>3$, then
\begin{equation}
\forall \snorm{x}\geq \epsilon: \qquad
\snorml{\snorm{\fundsolvelpp}\ast_{\grp} g (t,x)} 
\leq C M
\np{1+\snorm{x}}^{-3}.
\label{est:ConvFundsolpp}
\end{equation}
\item
If $A>4$, then
\begin{equation}
\forall \snorm{x}\geq \epsilon: \qquad
\snorml{\snorm{\grad\fundsolvelpp}\ast_{\grp} g (t,x)} 
\leq  C M 
\np{1+\snorm{x}}^{-4}.
\label{est:ConvFundsolpp_Grad}
\end{equation}
\end{enumerate}
\end{thm}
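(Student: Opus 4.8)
The plan is to reduce both assertions to a single scalar convolution estimate in the spatial variable by integrating out the time variable first. Writing the group convolution as $\fundsolvelpp\ast_\grp g(t,x)=\int_\torus\int_{\R^3}\fundsolvelpp(t-s,x-y)\,g(s,y)\,\dy\,\ds$, using that $\snorm{g(s,y)}\le M(1+\snorm{y})^{-A}$ uniformly in $s$, and invoking the translation invariance of the Haar measure on $\torus$, one obtains
\[
\snorm{\fundsolvelpp\ast_\grp g(t,x)}\le M\int_{\R^3} F(x-y)\,(1+\snorm{y})^{-A}\,\dy,
\qquad
F(z):=\norm{\fundsolvelpp(\cdot,z)}_{\LR{1}(\torus)}.
\]
The right-hand side is independent of $t$, which is exactly why the resulting bound is uniform in time. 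Two facts are used: estimate \eqref{est:tpfundsol_ComplPointwiseEst} with $r=1$, which provides for every $\delta>0$ a constant $C_\delta$ with $F(z)\le C_\delta\snorm{z}^{-3}$ for $\snorm{z}\ge\delta$; and the local summability $F\in\LR{1}(\ball_1)$, which follows from the local regularity of $\fundsolvelpp$ established in \cite{EiterKyed_etpfslns} and is genuinely needed, since \eqref{est:tpfundsol_ComplPointwiseEst} says nothing near $z=0$. (The estimates below will show the last integral converges, justifying the use of Fubini.) For the gradient part the argument is identical with $F$ replaced by $F_1(z):=\norm{\grad\fundsolvelpp(\cdot,z)}_{\LR{1}(\torus)}$, which satisfies $F_1(z)\le C_\delta\snorm{z}^{-4}$ for $\snorm{z}\ge\delta$ and $F_1\in\LR{1}(\ball_1)$.

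It then remains to bound $I(x):=\int_{\R^3}F(x-y)(1+\snorm{y})^{-A}\,\dy$ by $C(1+\snorm{x})^{-3}$ for $\snorm{x}\ge\epsilon$, $A>3$. I would split $\R^3$ into (i) $\snorm{y}\le\snorm{x}/2$; (ii) $\snorm{y}>\snorm{x}/2$ and $\snorm{x-y}\le\snorm{x}/4$; (iii) $\snorm{y}>\snorm{x}/2$ and $\snorm{x-y}>\snorm{x}/4$. In regions (i) and (iii) one has $\snorm{x-y}\ge\snorm{x}/4\ge\epsilon/4$, hence $F(x-y)\le C\snorm{x}^{-3}$, while the remaining $y$-integral of $(1+\snorm{y})^{-A}$ — over all of $\R^3$ in case (i), over $\set{\snorm{y}>\snorm{x}/2}$ in case (iii) — is bounded by a constant because $A>3$; so these regions contribute $\le C(1+\snorm{x})^{-3}$. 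In region (ii), $(1+\snorm{y})^{-A}\le C(1+\snorm{x})^{-A}$, and after substituting $z=x-y$ the remaining integral obeys
\[
\int_{\snorm{z}\le\snorm{x}/4}F(z)\,\dz
\le\norm{F}_{\LR{1}(\ball_1)}+C\!\!\int_{1\le\snorm{z}\le\snorm{x}/4}\!\!\snorm{z}^{-3}\,\dz
\le C\log(2+\snorm{x}),
\]
so region (ii) contributes $\le C(1+\snorm{x})^{-A}\log(2+\snorm{x})\le C(1+\snorm{x})^{-3}$, where the last step uses that $A>3$ is \emph{strict}, so the logarithm is absorbed into the surplus power. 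Summing the three contributions gives \eqref{est:ConvFundsolpp}.

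The gradient estimate \eqref{est:ConvFundsolpp_Grad} follows by the same decomposition with $F_1$ in place of $F$ and target exponent $-4$; the one change is that in region (ii) the intermediate integral is now $\int_{1\le\snorm{z}\le\snorm{x}/4}\snorm{z}^{-4}\,\dz\le C$, i.e.\ \emph{bounded} rather than logarithmic, so that one only needs $A>4$ — to keep $\int_{\R^3}(1+\snorm{y})^{-A}\,\dy$ and $\int_{\snorm{y}>\snorm{x}/2}(1+\snorm{y})^{-A}\,\dy$ under control and to have $(1+\snorm{x})^{-A}\le(1+\snorm{x})^{-4}$ in region (ii). Since the genuinely hard analysis — the pointwise decay \eqref{est:tpfundsol_ComplPointwiseEst} of $\fundsolvelpp$ and its local summability — is already at our disposal, I do not expect a real obstacle; the only delicate points are the bookkeeping at the borderline decay rates $\snorm{z}^{-3}$ and $\snorm{z}^{-4}$, in particular using the strictness of $A>3$ precisely where the logarithm shows up, and correctly invoking the local-integrability input near the origin.
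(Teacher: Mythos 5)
The paper does not actually prove this theorem: the ``proof'' it gives is a one-line citation to \cite[Theorem 3.3]{Eiter_SpatiallyAsymptoticStructureTPNS_2020}. So there is no in-text argument to compare against. What you have written is a correct, self-contained reconstruction of the kind of argument that must stand behind the citation, and the structure of your argument is exactly what one would expect (splitting the convolution into a near and far region and exploiting that the kernel's decay rate matches the desired decay rate).

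Your reduction to the scalar estimate via $F(z)\coloneqq\norm{\fundsolvelpp(\cdot,z)}_{\LR{1}(\torus)}$ is the right move: after pulling out $\snorm{g(s,y)}\le M(1+\snorm{y})^{-A}$, the $s$-integral decouples by translation invariance of the Haar measure on $\torus$, and the resulting bound is manifestly uniform in $t$. You correctly identify the two inputs needed on $F$: the far-field bound $F(z)\le C_\delta\snorm{z}^{-3}$ for $\snorm{z}\ge\delta$, which is \eqref{est:tpfundsol_ComplPointwiseEst} with $r=1$, $\alpha=0$; and local integrability of $F$ near $z=0$, which indeed cannot be extracted from \eqref{est:tpfundsol_ComplPointwiseEst} alone (that estimate is vacuous as $\snorm{z}\to0$, and $\snorm{z}^{-3}$ is non-integrable at the origin in $\R^3$). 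You correctly flag that this local integrability comes from \cite{EiterKyed_etpfslns}; concretely, that reference gives $\fundsolvelpp\in\LR{q}(\torus\times\R^3)$ and $\grad\fundsolvelpp\in\LR{q}(\torus\times\R^3)$ for $q$ in a nontrivial range including $q=1$, so $F,F_1\in\LR{1}(\R^3)$ by Fubini, and a fortiori in $\LR{1}(\ball_1)$. Your three-region decomposition then works exactly as stated: in regions (i) and (iii) you have $\snorm{x-y}\gtrsim\snorm{x}\gtrsim\epsilon$, so $F(x-y)\lesssim\snorm{x}^{-3}$ and the $(1+\snorm{y})^{-A}$ integral is finite precisely because $A>3$; in region (ii) the critical $\int_{1\le\snorm{z}\le\snorm{x}/4}\snorm{z}^{-3}\,\dz\sim\log\snorm{x}$ is absorbed by the surplus power in $(1+\snorm{x})^{-A}$ exactly because $A>3$ is strict. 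The gradient version with $F_1(z)\lesssim\snorm{z}^{-4}$ and $A>4$ is simpler (the region-(ii) integral is now bounded), as you note. Two small remarks: first, in regions (i) and (iii) your bound comes out as $C\snorm{x}^{-3}$ rather than $C(1+\snorm{x})^{-3}$; this is harmless since the constant is allowed to depend on $\epsilon$ and $\snorm{x}\ge\epsilon$, but it is worth stating explicitly. Second, the final Fubini justification you defer to the end is indeed covered, since the iterated integral of absolute values you produce is shown finite. The argument is correct.
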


\begin{proof}
We refer to \cite[Theorem 3.3]{Eiter_SpatiallyAsymptoticStructureTPNS_2020}.
\end{proof}

Next we derive a fundamental solution for the vorticity field $\curl\uvel$. 
For $\uvel=\fundsolvel\ast f$ a direct computation yields
\[
\np{\curl\uvel}_m
= \varepsilon_{mhj}\partial_h\fundsolvelssjl\ast\proj f _\ell
+ \varepsilon_{mhj}\partial_h\fundsolvelppjl\ast f_\ell
=\varepsilon_{mh\ell} \partial_h\fundsolvortss\ast\proj f _\ell
+\varepsilon_{mh\ell}\partial_h\fundsolvortpp\ast f_\ell
\]
with
\begin{align}
\fundsolvortss(x)
&\coloneqq \frac{1}{4\pi\snorm{x}}\e^{-\wakefct{\rey x}/2},
\label{eq:fundsolvortss} 
\\
\fundsolvortpp
&\coloneqq \iFT_\grp\Bb{\frac{1-\delta_\Z(k)}{\snorm{\xi}^2-i\rey \xi_1+i\perf k}}.
\label{eq:fundsolvortpp_multiplier}
\end{align}
In conclusion, we obtain
\begin{equation}\label{eq:vorticityOseen}
\curl \uvel(t,x) = \int_{\grp} \grad\fundsolvort(t-s,x-y) \wedge f(s,y) \,\dsy,
\end{equation}
where
\begin{equation}\label{eq:fundsolvort_decomp}
\fundsolvort\coloneqq\fundsolvortss\otimes\onedist_{\torus}+\fundsolvortpp.
\end{equation}
We have thus found an integral formula for the vorticity $\curl\uvel$. 
We call $\fundsolvort$ the \emph{vorticity fundamental solution}.
As for the velocity fundamental solution $\fundsolvel$, 
the vorticity fundamental solution $\fundsolvort$ 
decomposes into a steady-state and a purely periodic part,
which can be analyzed separately.
A direct computation leads to the the following estimate of $\grad\fundsolvortss$.
\begin{thm}\label{thm:FundsolVortss_est}
There exists $ C= C\np{\rey}>0$ 
such that for all $x\in\R^3\setminus\set{0}$ it holds
\begin{equation}
\snorm{\grad\fundsolvortss(x)}
\leq C\bp{\snorm{x}^{-2}+\snorm{x}^{-3/2}\wakefct{\rey x}^{1/2}} 
\e^{-\wakefct{\rey x}/2}.
\label{est:FundsolVortss_grad}
\end{equation}
\end{thm}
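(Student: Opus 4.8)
The plan is to establish \eqref{est:FundsolVortss_grad} by differentiating the explicit expression \eqref{eq:fundsolvortss} directly; unlike the later parts of the paper this needs no fixed-point argument or convolution estimate, only careful bookkeeping of how the anisotropic weight $\wakefct{\rey x}$ is generated by the derivatives. Since $\rey>0$ we have $\wakefct{\rey x}=\rey\wakefct{x}$, and the elementary identities $\partial_j\snorm{x}=x_j/\snorm{x}$ and $\partial_j\wakefct{x}=x_j/\snorm{x}+\delta_{1j}$, together with the product rule applied to $\fundsolvortss(x)=\tfrac{1}{4\pi\snorm{x}}\e^{-\wakefct{\rey x}/2}$, give
\[ \partial_j\fundsolvortss(x)=\frac{1}{4\pi}\Bb{-\frac{x_j}{\snorm{x}^3}-\frac{\rey}{2\snorm{x}}\Bp{\frac{x_j}{\snorm{x}}+\delta_{1j}}}\e^{-\wakefct{\rey x}/2}. \]
It then suffices to bound the bracket, uniformly in $x\in\R^3\setminus\set{0}$, by $C(\rey)\np{\snorm{x}^{-2}+\snorm{x}^{-3/2}\wakefct{\rey x}^{1/2}}$, after which the exponential factor simply passes through.

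I would estimate the three contributions to the bracket separately. The term $\snorm{x_j}/\snorm{x}^3\le\snorm{x}^{-2}$ is immediate. For the streamwise derivative ($j=1$) the key observation is that $x_1/\snorm{x}+1=\wakefct{x}/\snorm{x}\ge0$, so what looks like an $\snorm{x}^{-1}$ term is in fact $\frac{\rey}{2\snorm{x}}\cdot\frac{\wakefct{x}}{\snorm{x}}=\frac{\wakefct{\rey x}}{2\snorm{x}^2}$; since $\wakefct{\rey x}=\rey(\snorm{x}+x_1)\le2\rey\snorm{x}$ this is at most $\tfrac12(2\rey)^{1/2}\snorm{x}^{-3/2}\wakefct{\rey x}^{1/2}$. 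For the transverse directions ($j\in\set{2,3}$) one uses $x_2^2+x_3^2=\snorm{x}^2-x_1^2=(\snorm{x}-x_1)(\snorm{x}+x_1)\le2\snorm{x}\wakefct{x}$, whence $\snorm{x_j}\le(2\snorm{x}\wakefct{x})^{1/2}$ and $\frac{\rey}{2\snorm{x}}\cdot\frac{\snorm{x_j}}{\snorm{x}}\le\frac{\rey^{1/2}}{\sqrt2}\,\snorm{x}^{-3/2}\wakefct{\rey x}^{1/2}$, using $\wakefct{x}=\rey^{-1}\wakefct{\rey x}$. Summing over $j=1,2,3$ yields \eqref{est:FundsolVortss_grad} with a constant depending only on $\rey$.

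I do not expect any real obstacle here: the estimate is completely elementary once one notices the two identities $\partial_1\wakefct{x}=\wakefct{x}/\snorm{x}$ and $x_2^2+x_3^2\le2\snorm{x}\wakefct{x}$, which are exactly what supplies the extra half-power of the wake weight $\wakefct{\rey x}$ in the streamwise and transverse directions, respectively. The only points requiring mild attention are keeping track of the $\rey$-dependence of the constant and observing that $\e^{-\rey\wakefct{x}/2}=\e^{-\wakefct{\rey x}/2}$ factors cleanly out of every term.
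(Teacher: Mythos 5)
Your proof is correct and essentially the same as the paper's: both proceed by differentiating \eqref{eq:fundsolvortss} directly and extracting the anisotropic factor $\wakefct{\rey x}^{1/2}$ from the gradient of the wake function. The only organizational difference is that the paper states the single identity $\snorm{\grad\nb{\wakefct{\rey x}}}^2=2\rey^2\wakefct{x}/\snorm{x}$, which bounds $\snorm{\grad\bb{\wakefct{\rey x}}}$ all at once, whereas you split the same computation into the streamwise observation $\partial_1\wakefct{x}=\wakefct{x}/\snorm{x}$ and the transverse bound $x_2^2+x_3^2\le2\snorm{x}\wakefct{x}$; these two facts are precisely the component-wise decomposition of the paper's identity, since $\snorm{\grad s(x)}^2=(\partial_1 s)^2+(x_2^2+x_3^2)/\snorm{x}^2$. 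Both versions are equally elementary and yield the same constant structure.
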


\begin{proof}
The estimate follows directly by taking derivatives in \eqref{eq:fundsolvortss}
and using the identity 
$\snorm{\grad\nb{\wakefct{\rey x}}}^2=2\rey^2\wakefct{x}/\snorm{x}$.
\end{proof}

The remainder of this section is dedicated to the derivation of
an analogous estimate of $\grad\fundsolvortpp$.
More precisely, we show the following result.

\begin{thm}\label{thm:FundsolVortpp_est}
There exist constants $C= C(\rey,\per,q,\gamma)>0$
and $\Cexpfs= \Cexpfs(\rey,\per)>0$
such that 
for all $\gamma\in(0,1)$, $q\in[1,\frac{1}{1-\gamma})$ and
$x\in\R^3\setminus\set{0}$
it holds
\begin{align}
\norm{\fundsolvortpp(\cdot,x)}_{\LR{q}(\torus)}
&\leq C\snorm{x}^{-(1+2\gamma)} \e^{- \Cexpfs\snorm{x}},
\label{est:FundsolVortpp_fct}\\
\norm{\grad\fundsolvortpp(\cdot,x)}_{\LR{q}(\torus)}
&\leq C\snorm{x}^{-(2+2\gamma)} \e^{- \Cexpfs\snorm{x}}.
\label{est:FundsolVortpp_grad}
\end{align}
\end{thm}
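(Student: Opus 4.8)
The plan is to analyze the Fourier multiplier defining $\fundsolvortpp$ in \eqref{eq:fundsolvortpp_multiplier} directly. Writing out the inverse Fourier transform on $\grp = \torus\times\R^3$, we have
\[
\fundsolvortpp(t,x) = \sum_{k\in\Z\setminus\set{0}} \e^{i\perf kt} \int_{\R^3} \frac{\e^{ix\cdot\xi}}{\snorm{\xi}^2 - i\rey\xi_1 + i\perf k}\,\dxi
= \sum_{k\in\Z\setminus\set{0}} \e^{i\perf kt}\, \calk_k(x),
\]
where each $\calk_k$ is (up to the Oseen drift $-i\rey\xi_1$) the kernel of the resolvent $(-\Delta - \rey\partial_1 + i\perf k)^{-1}$ on $\R^3$. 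The key point is that for $k\neq 0$ the operator $-\Delta - \rey\partial_1 + i\perf k$ has a strictly positive ``mass term'' in the sense that, after completing the square $\snorm{\xi}^2 - i\rey\xi_1 = \snorm{\xi - \tfrac{i\rey}{2}\eone}^2 + \tfrac{\rey^2}{4}$, the symbol never vanishes and in fact stays bounded away from zero by a quantity growing like $\snorm{k}$. This means $\calk_k$ is, after the standard shift $\calk_k(x) = \e^{\rey x_1/2}\widetilde{\calk}_k(x)$, essentially the Yukawa (Bessel-potential) kernel in $\R^3$ with mass parameter $\mu_k^2 := \tfrac{\rey^2}{4} + i\perf k$; explicitly
\[
\widetilde{\calk}_k(x) = \frac{\e^{-\mu_k\snorm{x}}}{4\pi\snorm{x}}, \qquad \realpart\mu_k \geq c(\rey,\per)\bp{1 + \snorm{k}}^{1/2}
\]
for a suitable branch of the square root. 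So each summand decays exponentially in $\snorm{x}$, but with a rate that \emph{improves} with $\snorm{k}$, which is exactly the mechanism that will let the series converge after extracting a uniform exponential factor.

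**The main estimate, term by term.** First I would record the pointwise bound on each kernel and its gradient: from the explicit Yukawa formula, for $\snorm{x}>0$,
\[
\snorm{\widetilde{\calk}_k(x)} \leq C\,\frac{\e^{-\realpart\mu_k\snorm{x}}}{\snorm{x}}, \qquad
\snorm{\grad\widetilde{\calk}_k(x)} \leq C\,\frac{(1+\snorm{\mu_k})\,\e^{-\realpart\mu_k\snorm{x}}}{\snorm{x}} + C\,\frac{\e^{-\realpart\mu_k\snorm{x}}}{\snorm{x}^2}.
\]
Since $\snorm{\mu_k}\leq C(1+\snorm{k})^{1/2}$ and $\realpart\mu_k \geq c(1+\snorm{k})^{1/2}$, and since $\e^{\rey x_1/2}\leq \e^{\rey\snorm{x}/2}$, the factor $\e^{\rey x_1/2}$ is harmlessly absorbed provided $\realpart\mu_k > \rey/2$, which holds for all $\snorm{k}$ large and can be arranged for \emph{all} $k\neq 0$ at the cost of the constant $\Cexpfs$. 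Now, to get the $\LR{q}(\torus)$ bound I would use the Hausdorff--Young inequality on $\torus$ — for $q\in[1,\infty)$ one controls $\norm{\cdot}_{\LR{q}(\torus)}$ by the $\ell^{q'}$ norm of the Fourier coefficients when $q\geq 2$, but since we want all $q\in[1,\tfrac{1}{1-\gamma})$ and that range sits below $2$, the cleaner route is: for $q\in[1,\infty)$,
\[
\norm{\fundsolvortpp(\cdot,x)}_{\LR{q}(\torus)} \leq \norm{\fundsolvortpp(\cdot,x)}_{\LR{\infty}(\torus)} \leq \sum_{k\neq 0}\snorm{\calk_k(x)} \leq C\,\e^{\rey x_1/2}\sum_{k\neq 0}\frac{\e^{-\realpart\mu_k\snorm{x}}}{\snorm{x}}.
\]
Hmm — but this gives only $\snorm{x}^{-1}\e^{-\Cexpfs\snorm{x}}$, not the claimed $\snorm{x}^{-(1+2\gamma)}$. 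The extra $\snorm{x}^{-2\gamma}$ decay must come from the series: splitting off the exponential factor $\e^{-\Cexpfs\snorm{x}}$ with $\Cexpfs$ slightly below $\inf_{k\neq0}(\realpart\mu_k - \rey/2)$, the tail is $\sum_{k\neq 0}\e^{-(\realpart\mu_k - \rey/2 - \Cexpfs)\snorm{x}} \leq \sum_{k\neq 0}\e^{-c'(1+\snorm{k})^{1/2}\snorm{x}}$, and comparing this sum to an integral, $\sum_{k\geq 1}\e^{-c'\sqrt{k}\,\snorm{x}} \sim \int_1^\infty \e^{-c'\sqrt{s}\,\snorm{x}}\,ds \sim C\snorm{x}^{-2}$ for small $\snorm{x}$. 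That yields the local blow-up rate $\snorm{x}^{-1}\cdot\snorm{x}^{-2} = \snorm{x}^{-3}$ near the origin, which for $\snorm{x}\leq 1$ dominates $\snorm{x}^{-(1+2\gamma)}$ for any $\gamma<1$; and for $\snorm{x}\geq 1$ everything is bounded by $C\e^{-\Cexpfs\snorm{x}}$ anyway. So \eqref{est:FundsolVortpp_fct} follows by interpolating these two regimes — the role of $\gamma$ and of restricting $q<\tfrac{1}{1-\gamma}$ is precisely to trade a worse power of $\snorm{x}$ near $0$ for a uniform-in-$q$ constant, via a Hölder estimate $\norm{f}_{\LR{q}(\torus)}\leq \norm{f}_{\LR{\infty}(\torus)}^{1-\theta}\norm{f}_{\LR{1}(\torus)}^{\theta}$ combined with the fact that the $\LR{1}$ norm in $t$ of $\fundsolvortpp$ (which involves no $k=0$ term) has better local behavior than the $\LR{\infty}$ norm. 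The gradient estimate \eqref{est:FundsolVortpp_grad} is identical but with one extra factor of $\snorm{\mu_k}\sim(1+\snorm{k})^{1/2}$ inside the sum and an extra $\snorm{x}^{-1}$ from differentiating $\snorm{x}^{-1}$; the $(1+\snorm{k})^{1/2}$ weakens the comparison integral from $\snorm{x}^{-2}$ to $\snorm{x}^{-3}$, giving one more power of $\snorm{x}$ in the bound, i.e. $\snorm{x}^{-(2+2\gamma)}$.

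**Assembling the proof and the main obstacle.** Concretely the steps are: (1) complete the square in the symbol and identify $\calk_k$ with a shifted Yukawa kernel, fixing the correct branch of $\sqrt{\mu_k^2}$ so that $\realpart\mu_k>0$; (2) prove the two-sided bound $c(1+\snorm{k})^{1/2}\leq\realpart\mu_k\leq\snorm{\mu_k}\leq C(1+\snorm{k})^{1/2}$; (3) write down the explicit pointwise bounds on $\calk_k$ and $\grad\calk_k$ including the $\e^{\rey x_1/2}$ factor; (4) fix $\Cexpfs\in\bp{0,\inf_{k\neq 0}(\realpart\mu_k-\rey/2)}$ — here one must check $\realpart\mu_1 > \rey/2$, i.e. that for $\snorm{k}=1$ the mass still beats the drift, which is where the dependence of $\Cexpfs$ on both $\rey$ and $\per$ enters and where a genuine computation with the square root is needed; (5) sum the series in $k$, comparing to $\int_0^\infty\e^{-c\sqrt{s}\,\snorm{x}}\,s^{j/2}\,ds = C\snorm{x}^{-2-j}$ for $j=0$ (function) and the analogous bound for $j=1$ (gradient); (6) interpolate in $q$ between $\LR{\infty}(\torus)$ and $\LR{1}(\torus)$ to obtain the $\gamma$-dependent exponent and the uniform constant. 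I expect the genuine obstacle to be step (4) together with the care needed in step (2): one must show that the resolvent ``mass'' $\realpart\mu_k$ is \emph{strictly} positive and bounded below uniformly over $k\neq 0$, which is clear for large $\snorm{k}$ but for $\snorm{k}=1$ requires checking that the Oseen drift $\rey\partial_1$ does not destroy the gap — this is exactly the point where the purely periodic frequencies $k\neq 0$ behave qualitatively better than the steady ($k=0$) part, and it is what makes $\fundsolvortpp$ decay exponentially in $\snorm{x}$ (isotropically) rather than only along the wake complement. Everything downstream (the series summation and the interpolation in $q$) is routine once this spectral gap is in hand.
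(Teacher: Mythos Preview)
Your kernel identification (steps 1--4) is correct and matches the paper's Lemma~4.5: each Fourier mode is a shifted Yukawa kernel with mass parameter whose real part is bounded below by $c\snorm{k}^{1/2}$, and the check that $\realpart\mu_1>\rey/2$ is exactly the content of that lemma. Summing the series to obtain the $\LR{\infty}(\torus)$ bound $\snorm{x}^{-3}\e^{-\Cexpfs\snorm{x}}$ (step 5) is also fine.

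The genuine gap is step 6. You assert that the $\LR{1}(\torus)$ norm of $\fundsolvortpp(\cdot,x)$ has local blow-up rate $\snorm{x}^{-1}$, strictly better than the $\LR{\infty}$ rate $\snorm{x}^{-3}$, so that H\"older interpolation recovers the intermediate exponent $-(1+2\gamma)$. But you give no argument for the $\LR{1}$ bound, and it does not follow from the size of the Fourier coefficients: the $\LR{1}$ norm of a Fourier series is not controlled by any $\ell^p$ norm of its coefficients. The only cheap bound is $\norm{\cdot}_{\LR{1}}\le\norm{\cdot}_{\LR{2}}$, and Parseval gives $\norm{\fundsolvortpp(\cdot,x)}_{\LR{2}}\sim\snorm{x}^{-2}$, which is too weak whenever $\gamma<1/2$. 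Hausdorff--Young covers $q\ge2$ (and indeed gives the sharper bound $\snorm{x}^{-(3-2/q)}$), but for $q\in[1,2)$ and $\gamma\in(1-1/q,1/2)$ your outline produces no inequality stronger than $\snorm{x}^{-2}$, while the theorem claims $\snorm{x}^{-(1+2\gamma)}$ with $1+2\gamma<2$. Establishing the $\LR{1}$ bound directly would require exploiting cancellation among the phases $\e^{i\sqrt{-\mu_k}\snorm{x}}$, which is a separate (and nontrivial) argument.

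The paper takes a different route that sidesteps this entirely. It writes
\[
\D_x^\alpha\fundsolvortpp(\cdot,x)=\iFT_\torus\bb{\Mmultiplier_{\alpha,x}\,\FT_\torus\nb{\varphi_\gamma}},
\qquad
\Mmultiplier_{\alpha,x}(k)=(1-\delta_\Z(k))\snorm{k}^\gamma\D_x^\alpha\fundsolaux(x),
\quad
\varphi_\gamma=\iFT_\torus\bb{(1-\delta_\Z)\snorm{k}^{-\gamma}},
\]
and shows (Lemma~4.6) via the Marcinkiewicz multiplier theorem and the transference principle that $\Mmultiplier_{\alpha,x}$ is an $\LR{q}(\torus)$ multiplier with operator norm $\le C\snorm{x}^{-(1+\snorm{\alpha}+2\gamma)}\e^{-\CexpfsMult\snorm{x}}$ for every $q\in(1,\infty)$. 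Since $\varphi_\gamma\in\LR{q}(\torus)$ precisely for $q<1/(1-\gamma)$, the claimed estimate follows by applying the multiplier to this fixed function. Thus the constraint $q<1/(1-\gamma)$ arises not from interpolation but from the integrability threshold of $\varphi_\gamma$, and the factor $\snorm{x}^{-2\gamma}$ comes from the multiplier bound on $\snorm{k}^\gamma\calk_k(x)$ rather than from the series summation.
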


For the proof of Theorem \ref{thm:FundsolVortpp_est}
we represent $\fundsolvortpp$ in a different way.
From
$\iFT_{\grp}=\iFT_{\torus}\otimes\iFT_{\R^3}$ we conclude the identity
\begin{equation}
\fundsolvortpp(t,x)
= \iFT_\torus \bb{ k\mapsto\bp{1-\delta_\Z(k)}\fundsolaux(x)}(t),
\label{eq:fundsolvortpp_series}
\end{equation}
where 
\[
\fundsolauxeta\coloneqq\iFT_{\R^3}\Bb{\frac{1}{\snorm{\xi}^2-i\rey \xi_1+i\eta}}
\]
is the fundamental solution to the equation
\begin{equation}
\label{eq:HelmholtzWithDrift}
i\eta\,\vvel-\Delta\vvel -\rey \partial_1\vvel =f \qquad \tin\R^3.
\end{equation}
This function is explicitly given by
\begin{equation}\label{eq:fundsolaux3D}
\fundsolauxeta\colon\R^3\setminus\set{0}\to\C, \qquad
\fundsolauxeta(x)=\frac{1}{4\pi\snorm{x}}\e^{i\sqrt{-\mu}\snorm{x}-\frac{\rey}{2}x_1}
\end{equation}
for $\eta\neq0$ and $\mu \coloneqq \mu(\eta,\rey)\coloneqq (\rey/2)^2+i\eta\in\C\setminus\R$;
see \cite[Lemma 3.3]{EiterKyed_etpfslns}.
Here $\sqrt{z}$ is the square root of $z$ with nonnegative imaginary part.
We first derive pointwise estimates of $\fundsolauxeta$.

\begin{lem}\label{lem:fundsolaux_pointwiseEst3D}
Let $\eta_0>0$. Then there exists
$\Cexp= \Cexp(\rey,\eta_0)>0$ such that
\begin{align}
\snorm{\fundsolauxeta(x)}
&\leq C\snorm{x}^{-1}\e^{-\Cexp\snorm{\eta}^\half\snorm{x}}, 
\label{est:fundsolaux_pointwiseEst3D_fct}\\
\snorm{\grad\fundsolauxeta(x)}
&\leq C\bp{\snorm{x}^{-2}+\snorm{\eta}^\half\snorm{x}^{-1}}
\e^{-\Cexp\snorm{\eta}^\half\snorm{x}},
\label{est:fundsolaux_pointwiseEst3D_grad}
\end{align}
for all $\eta\in\R$ with $\snorm{\eta}>\eta_0$ and $x\in\R^3\setminus\set{0}$.
\end{lem}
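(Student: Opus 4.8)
The plan is to read both estimates directly off the closed formula~\eqref{eq:fundsolaux3D}, the only substantial point being a lower bound on $\impart\sqrt{-\mu}$ strong enough to absorb the factor $\e^{-\rey x_1/2}$, which grows like $\e^{\rey\snorm{x}/2}$ when $x_1<0$.

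First I would analyze the branch $\sqrt{-\mu}$. Write $\sqrt{-\mu}=a+ib$ with $a,b\in\R$ and $b>0$ (this is the prescribed branch, and $b\neq0$ since $\mu\notin\R$). Squaring and comparing real and imaginary parts in $(a+ib)^2=-\mu=-(\rey/2)^2-i\eta$ gives $a^2-b^2=-(\rey/2)^2$ and $2ab=-\eta$, hence $a^2+b^2=\sqrt{(\rey/2)^4+\eta^2}$, and therefore
\[
b^2=\tfrac12\Bp{\sqrt{(\rey/2)^4+\eta^2}+(\rey/2)^2},
\qquad
\snorm{\sqrt{-\mu}}=\bp{(\rey/2)^4+\eta^2}^{1/4}.
\]
From the first identity, $b^2-(\rey/2)^2=\tfrac12\bp{\sqrt{(\rey/2)^4+\eta^2}-(\rey/2)^2}=\tfrac12\,\eta^2\bp{\sqrt{(\rey/2)^4+\eta^2}+(\rey/2)^2}^{-1}\geq c_1(\rey,\eta_0)\snorm{\eta}$ whenever $\snorm{\eta}>\eta_0$, while $b^2\leq\tfrac12\bp{\snorm{\eta}+\rey^2/2}$ gives $b+\rey/2\leq c_2(\rey,\eta_0)\snorm{\eta}^{1/2}$ on the same range. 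Dividing yields
\[
b-\tfrac{\rey}{2}=\frac{b^2-(\rey/2)^2}{b+\rey/2}\geq\Cexp\,\snorm{\eta}^{1/2},
\qquad \snorm{\eta}>\eta_0,
\]
with $\Cexp=c_1/c_2>0$ depending only on $\rey$ and $\eta_0$; from the second identity one also has $\snorm{\sqrt{-\mu}}\leq C(\rey,\eta_0)\snorm{\eta}^{1/2}$ for $\snorm{\eta}>\eta_0$.

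Granted these two facts, the estimates follow at once. Since $\snorm{\fundsolauxeta(x)}=\tfrac1{4\pi}\snorm{x}^{-1}\e^{-b\snorm{x}-\rey x_1/2}$ and $x_1\geq-\snorm{x}$, the exponent is bounded above by $-(b-\rey/2)\snorm{x}\leq-\Cexp\snorm{\eta}^{1/2}\snorm{x}$, which gives~\eqref{est:fundsolaux_pointwiseEst3D_fct}. For the gradient, differentiating~\eqref{eq:fundsolaux3D} via $\partial_j\snorm{x}=x_j/\snorm{x}$ yields
\[
\partial_j\fundsolauxeta(x)=\frac{1}{4\pi}\Bb{-\frac{x_j}{\snorm{x}^{3}}+\frac{1}{\snorm{x}}\Bp{i\sqrt{-\mu}\,\frac{x_j}{\snorm{x}}-\frac{\rey}{2}\delta_{j1}}}\e^{i\sqrt{-\mu}\snorm{x}-\rey x_1/2},
\]
hence $\snorm{\grad\fundsolauxeta(x)}\leq C\bp{\snorm{x}^{-2}+\snorm{x}^{-1}\np{\snorm{\sqrt{-\mu}}+\rey}}\e^{-(b-\rey/2)\snorm{x}}$; bounding $\snorm{\sqrt{-\mu}}+\rey\leq C(\rey,\eta_0)\snorm{\eta}^{1/2}$ (using $\rey\leq\rey\,\eta_0^{-1/2}\snorm{\eta}^{1/2}$ for $\snorm{\eta}>\eta_0$) and the exponent as before gives~\eqref{est:fundsolaux_pointwiseEst3D_grad}.

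The only genuine obstacle is the lower bound $b-\rey/2\geq\Cexp\snorm{\eta}^{1/2}$. It is elementary once the formula for $b^2$ is available, but it necessarily degenerates as $\eta_0\downarrow0$: indeed $\sqrt{-\mu}\to i\rey/2$, hence $b\to\rey/2$, as $\eta\to0$, which is exactly why the hypothesis $\snorm{\eta}>\eta_0$ cannot be dropped. Everything else reduces to the product rule and one-variable inequalities.
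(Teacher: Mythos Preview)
Your proof is correct and follows essentially the same approach as the paper: both hinge on the lower bound $\impart\sqrt{-\mu}-\rey/2\geq\Cexp\snorm{\eta}^{1/2}$ and then read the two estimates off the explicit formula~\eqref{eq:fundsolaux3D}. The only difference is that the paper cites this key inequality from \cite[Lemma~3.2]{EiterKyed_etpfslns}, whereas you supply a self-contained elementary derivation via the closed form for $b^2$.
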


\begin{proof}
As in \cite[Lemma 3.2]{EiterKyed_etpfslns}, 
we show the existence of a constant
$\Cexp= \Cexp(\rey,\eta_0)>0$ such that 
\[
\impart( \sqrt{-\mu})-\frac{\snorm{\rey}}{2} \geq \Cexp \snorm{\eta}^\frac{1}{2}
\]
for $\snorm{\eta}\geq\eta_0$ and $\mu = (\rey/2)^2+i\eta$.
We thus have
\[
\snormL{\e^{i\sqrt{-\mu}\snorm{x}-\frac{\rey}{2}x_1}}
\leq \e^{-\impart\np{\sqrt{-\mu}}\snorm{x}+\frac{\snorm{\rey}}{2}\snorm{x}}
\leq \e^{-\Cexp\snorm{\eta}^\frac{1}{2}\snorm{x}}.
\]
This directly implies \eqref{est:fundsolaux_pointwiseEst3D_fct}.
Computing derivatives and employing this estimate again, we further deduce
\[
\snorml{\grad\fundsolauxeta(x)}
\leq C\bp{
\snorm{x}^{-2}+\snorm{x}^{-1}\np{\snorm{\sqrt{-\mu}}+\snorm{\rey}}
}\e^{-\Cexp\snorm{\eta}^\frac{1}{2}\snorm{x}},
\]
which implies \eqref{est:fundsolaux_pointwiseEst3D_grad}
by using $\snorm{\rey}\leq2\snorm{\sqrt{-\mu}}\leq C\snorm{\eta}^\frac{1}{2}$
for $\snorm{\eta}\geq\eta_0$.
\end{proof}
Now let
$\cutoff\in\CRi(\R)$, $0\leq \cutoff \leq 1$, with
$\cutoff(\eta)=0$ for $\snorm{\eta}\leq \half$ and $\cutoff(\eta)=1$ for $\snorm{\eta}\geq 1$. 
For $\alpha\in\N_0^3$ with $\snorm{\alpha}\leq1$, $\gamma\in(0,1)$ 
and $x\in\R^3\setminus\set{0}$
define the function
\begin{align}\label{eq:FundsolVortpp_est_multiplier}
\mmultiplier_{\alpha,x}\colon\R\to\R, \qquad 
\mmultiplier_{\alpha,x}(\eta)\coloneqq\cutoff(\eta)\snorm{\eta}^\gamma 
\D^\alpha\fundsolauxetaperf(x).
\end{align}
We show that $\mmultiplier_{\alpha,x}$ is an $\LR{q}(\R)$ multiplier
and give an estimate of the multiplier norm
by means of the Marcinkiewicz Multiplier Theorem.

\begin{lem}\label{lem:FundsolVortpp_est_multiplier}
Let $\alpha\in\N_0^3$ with $\snorm{\alpha}\leq1$, $\gamma\in(0,1)$ 
and $x\in\R^3\setminus\set{0}$.
Then $\mmultiplier_{\alpha,x}$ is an $\LR{q}(\R)$ multiplier for any $q\in(1,\infty)$,
and there exist constants 
$ C
=C\np{\rey,\per,q,\alpha,\gamma}>0$
and $ \CexpfsMult= \CexpfsMult\np{\rey,\per}>0$
such that
\[
\norml{\iFT_\R\bb{\mmultiplier_{\alpha,x}\FT_\R\nb{f}}}_{\LR{q}(\R)}
\leq C\snorm{x}^{-1-\snorm{\alpha}-2\gamma}
\e^{- \CexpfsMult\snorm{x}}\norm{f}_{\LR{q}(\R)}.
\]
\end{lem}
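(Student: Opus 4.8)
The plan is to verify that, for each fixed $x\in\R^3\setminus\set{0}$, the function $\mmultiplier_{\alpha,x}$ meets the hypotheses of the one‑dimensional Marcinkiewicz multiplier theorem on $\R$, with the associated constant dominated by $\snorm{x}^{-1-\snorm{\alpha}-2\gamma}\e^{-\CexpfsMult\snorm{x}}$. Since $\mmultiplier_{\alpha,x}$ is smooth on $\R$ and vanishes on $\set{\snorm{\eta}\leq\thalf}$ (because $\cutoff\equiv0$ there), the variation of $\mmultiplier_{\alpha,x}$ over a dyadic interval $[2^j,2^{j+1}]$ or $[-2^{j+1},-2^j]$ coincides with the integral of $\snorm{\mmultiplier_{\alpha,x}'}$ over that interval, and the latter is at most $(\log 2)\,\sup_{\eta\in\R}\snorm{\eta\,\mmultiplier_{\alpha,x}'(\eta)}$. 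Hence it suffices to prove
\[
\sup_{\eta\in\R}\snorm{\mmultiplier_{\alpha,x}(\eta)}
+\sup_{\eta\in\R}\snorm{\eta\,\mmultiplier_{\alpha,x}'(\eta)}
\leq C\,\snorm{x}^{-1-\snorm{\alpha}-2\gamma}\,\e^{-\CexpfsMult\snorm{x}},
\]
whereupon Marcinkiewicz's theorem yields that $\mmultiplier_{\alpha,x}$ is an $\LR{q}(\R)$ multiplier for every $q\in(1,\infty)$ with operator norm bounded by the right‑hand side; the rate $\CexpfsMult$ will turn out to depend only on $\rey$ and $\per$.

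For the pointwise input one starts from the explicit representation \eqref{eq:fundsolaux3D}, which gives $\fundsolauxetaperf(x)=\tfrac{1}{4\pi\snorm{x}}\e^{i\sqrt{-\mu}\snorm{x}-\frac{\rey}{2}x_1}$ with $\mu=(\rey/2)^2+i\perf\eta\in\C\setminus\R$, and differentiates in $x$ and in $\eta$, using $\partial_\eta\sqrt{-\mu}=-i\perf/(2\sqrt{-\mu})$. Arguing as in the proof of Lemma \ref{lem:fundsolaux_pointwiseEst3D} with $\eta_0=\thalf$, one has $\impart(\sqrt{-\mu})-\rey/2\geq\Cexp(\perf)^{1/2}\snorm{\eta}^{1/2}$ for $\snorm{\eta}\geq\thalf$, while $c_0\snorm{\eta}^{1/2}\leq\snorm{\sqrt{-\mu}}\leq C\snorm{\eta}^{1/2}$ and $\rey\leq C\snorm{\eta}^{1/2}$ on the same set. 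A direct computation then shows that on $\snorm{\eta}\geq\thalf$ each $x$‑derivative produces a factor of size $O\bp{\snorm{x}^{-1}+\snorm{\eta}^{1/2}}$, and $\partial_\eta$ a factor of size $O\bp{\snorm{\eta}^{-1/2}\bp{\snorm{x}+\snorm{\eta}^{-1/2}}}$. Tracking these through, one finds that for $\snorm{\alpha}\leq1$ both $\snorm{\D^\alpha\fundsolauxetaperf(x)}$ and $\snorm{\eta\,\partial_\eta\D^\alpha\fundsolauxetaperf(x)}$ are bounded, on $\snorm{\eta}\geq\thalf$, by finite sums of terms of the form $C\,\snorm{x}^{-a}\snorm{\eta}^{b}\e^{-c\snorm{\eta}^{1/2}\snorm{x}}$ with $a,b\geq0$ and $a+2b=1+\snorm{\alpha}$ (each $x$‑derivative raises the value of $a+2b$ by one, while $\eta\,\partial_\eta$ leaves it unchanged). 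Multiplying by $\cutoff(\eta)\snorm{\eta}^{\gamma}$—and also accounting for the contribution $\gamma\cutoff(\eta)\snorm{\eta}^{\gamma-1}\D^\alpha\fundsolauxetaperf(x)$ that arises when $\snorm{\eta}^{\gamma}$ is differentiated, and for the $\cutoff'$‑term, which is supported in $\thalf\leq\snorm{\eta}\leq1$ and is controlled trivially—every term contributing to $\snorm{\mmultiplier_{\alpha,x}(\eta)}$ and to $\snorm{\eta\,\mmultiplier_{\alpha,x}'(\eta)}$ takes the form $C\,\snorm{x}^{-a}\snorm{\eta}^{b+\gamma}\e^{-c\snorm{\eta}^{1/2}\snorm{x}}$ with $a+2(b+\gamma)=1+\snorm{\alpha}+2\gamma$.

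The argument is then closed by the elementary one‑variable estimate
\[
\sup_{\snorm{\eta}\geq\thalf}\snorm{\eta}^{\theta}\,\e^{-c\snorm{\eta}^{1/2}\snorm{x}}
\leq C(\theta,c)\,\snorm{x}^{-2\theta}\,\e^{-c'\snorm{x}}
\qquad(\theta>0,\ \snorm{x}>0),
\]
obtained by substituting $s=\snorm{\eta}^{1/2}$ and splitting according to whether the maximiser $s_\ast=2\theta/(c\snorm{x})$ lies below or above $2^{-1/2}$: when $\snorm{x}$ is large the supremum is attained at $s=2^{-1/2}$ and equals a pure exponential, which dominates any negative power of $\snorm{x}$; when $\snorm{x}$ is bounded the supremum equals $C(\theta,c)\snorm{x}^{-2\theta}$ and the exponential factor is harmless. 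Applying this with $\theta=b+\gamma$ to each of the terms above gives the bound $C\,\snorm{x}^{-a-2b-2\gamma}\e^{-c'\snorm{x}}=C\,\snorm{x}^{-1-\snorm{\alpha}-2\gamma}\e^{-c'\snorm{x}}$, which is exactly the required estimate, with $c'$ (hence $\CexpfsMult$) depending only on $\rey$ and $\per$. I expect the delicate point of the whole argument to be precisely this last dichotomy: one must extract genuine exponential decay $\e^{-\CexpfsMult\snorm{x}}$ uniformly for \emph{all} $x\neq0$ while simultaneously matching the \emph{exact} algebraic power $\snorm{x}^{-1-\snorm{\alpha}-2\gamma}$, which forces evaluation at the cutoff threshold $\snorm{\eta}=\thalf$—rather than at the unconstrained balance point $\snorm{\eta}\sim\snorm{x}^{-2}$—when $\snorm{x}$ is large.
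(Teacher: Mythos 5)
Your proposal is correct and takes essentially the same route as the paper: both establish pointwise bounds $\sup_\eta\snorm{\mmultiplier_{\alpha,x}(\eta)}+\sup_\eta\snorm{\eta\,\mmultiplier_{\alpha,x}'(\eta)}\leq C\snorm{x}^{-1-\snorm{\alpha}-2\gamma}\e^{-\CexpfsMult\snorm{x}}$ from the pointwise estimates of $\fundsolauxetaperf$ in Lemma \ref{lem:fundsolaux_pointwiseEst3D}, and then conclude via the Marcinkiewicz multiplier theorem. The only difference is bookkeeping: the paper extracts the algebraic power by taking an unconstrained supremum of $\snorm{\eta}^\gamma$ against one half of the exponential and then uses the cutoff constraint $\snorm{\eta}\geq\thalf$ to bound the remaining half by $\e^{-\CexpfsMult\snorm{x}}$, whereas you fold both steps into the single constrained estimate $\sup_{\snorm{\eta}\geq\thalf}\snorm{\eta}^{\theta}\e^{-c\snorm{\eta}^{1/2}\snorm{x}}\leq C\snorm{x}^{-2\theta}\e^{-c'\snorm{x}}$ via the dichotomy on the location of the unconstrained maximizer.
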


\begin{proof}
At first, let $\alpha=0$.
From \eqref{est:fundsolaux_pointwiseEst3D_fct} we conclude
\[
\snorm{\mmultiplier_{0,x}(\eta)}\leq C \cutoff(\eta)\snorm{\eta}^\gamma
\snorm{x}^{-1}\e^{- \Cexp\snorm{\perf\eta}^\half\snorm{x}}
\leq C \snorm{x}^{-1-2\gamma}\e^{- \Cexp\snorm{\perf\eta}^\half\snorm{x}/2}
\]
for $\snorm{\eta}\geq\half$.
Moreover, differentiating $\fundsolauxetaperf$ with respect to $\eta$, we obtain
\[
\snorml{\partial_\eta\fundsolauxetaperf(x)}
\leq C \snorm{\partial_\eta\sqrt{-\mu}}\,\snorm{x}\,\snorml{\fundsolauxetaperf(x)}
\leq C \snorm{\eta}^{-\half}\snorm{x}\,\snorml{\fundsolauxetaperf(x)},
\]
so that \eqref{est:fundsolaux_pointwiseEst3D_fct} yields
\begin{align*}
\snorm{\eta\partial_\eta\mmultiplier_{0,x}(\eta)}
&\leq\snorml{\cutoff'(\eta)\snorm{\eta}^{\gamma+1}\fundsolauxetaperf(x)}
+\snorml{\cutoff(\eta)\gamma\snorm{\eta}^\gamma\fundsolauxetaperf(x)}
+\snorml{\cutoff(\eta)\snorm{\eta}^{\gamma+1}\partial_\eta\fundsolauxetaperf(x)}\\
&\quad\leq C \bp{\snorm{\eta}^\gamma\snorm{x}^{-1}+\snorm{\eta}^{\gamma+\half}} 
\e^{- \Cexp\snorm{\perf\eta}^\half\snorm{x}}
\leq C \snorm{x}^{-1-2\gamma}\e^{- \Cexp\snorm{\perf\eta}^\half\snorm{x}/2}
\end{align*}
for $\snorm{\eta}\geq\half$.
Collecting these estimates and utilizing $\mmultiplier_{0,x}(\eta)=0$ for 
$\snorm{\eta}\leq \half$, we have
\begin{equation}\label{est:fundsolvortpp_fct_multiplierEst}
\snorm{\mmultiplier_{0,x}(\eta)}+\snorm{\eta\partial_\eta\mmultiplier_{0,x}(\eta)}
\leq C \snorm{x}^{-1-2\gamma}\e^{- \CexpfsMult\snorm{x}}
\end{equation}
with $ \CexpfsMult=\sqrt{\pi/\per} \Cexp/2$ for all $\eta\in\R$.

Next consider the case $\alpha=\ej$ for some $j\in\set{1,2,3}$.
Then \eqref{est:fundsolaux_pointwiseEst3D_grad} leads to
\[
\snorm{\mmultiplier_{\alpha,x}(\eta)}
\leq C \cutoff(\eta)\snorm{\eta}^\gamma
\bp{\snorm{x}^{-2}+\snorm{\eta}^\half\snorm{x}^{-1}}
\e^{- \Cexp\snorm{\perf\eta}^\half\snorm{x}}
\leq C \snorm{x}^{-2-2\gamma}\e^{- \Cexp\snorm{\perf\eta}^\half\snorm{x}/2}
\]
for $\snorm{\eta}\geq\half$.
Moreover, a straightforward calculation yields
\[
\snorml{\partial_\eta\partial_j\fundsolauxetaperf(x)}
\leq C \bp{\snorm{\mu}^{-\half}+\snorm{x}}
\snorml{\fundsolauxetaperf(x)}
\leq C \bp{\snorm{\eta}^{-\half}+\snorm{x}}
\snorml{\fundsolauxetaperf(x)},
\]
so that we can employ Lemma \ref{lem:fundsolaux_pointwiseEst3D} to estimate
\begin{align*}
\snorm{\eta\partial_\eta\partial_j\mmultiplier_{\alpha,x}(\eta)}
&\leq\snorml{\cutoff'(\eta)\snorm{\eta}^{\gamma+1}\partial_j\fundsolauxetaperf(x)}
+\snorml{\cutoff(\eta)\gamma\snorm{\eta}^\gamma\partial_j\fundsolauxetaperf(x)}\\
&\qquad\qquad+\snorml{\cutoff(\eta)\snorm{\eta}^{\gamma+1}
\partial_\eta\partial_j\fundsolauxetaperf(x)}\\
&\leq C \bp{\snorm{\eta}^\gamma\snorm{x}^{-2}+\snorm{\eta}^{\gamma+\half}\snorm{x}^{-1}
+\snorm{\eta}^{\gamma+1}} 
\e^{- \Cexp\snorm{\perf\eta}^\half\snorm{x}}
\\
&\leq C \snorm{x}^{-2-2\gamma}\e^{- \Cexp\snorm{\perf\eta}^\half\snorm{x}/2}
\end{align*}
for $\snorm{\eta}\geq\half$.
Collecting these estimates and utilizing $\mmultiplier_{\alpha,x}(\eta)=0$ for 
$\snorm{\eta}\leq \half$, we have
\begin{align}\label{est:fundsolvortpp_grad_multiplierEst}
\snorm{\mmultiplier_{\alpha,x}(\eta)}+\snorm{\eta\partial_\eta\mmultiplier_{\alpha,x}(\eta)}
\leq C \snorm{x}^{-2-2\gamma}\e^{- \CexpfsMult\snorm{x}}
\end{align}
with $ \CexpfsMult=\sqrt{\pi/\per} \Cexp/2$ as above.

By the Marcinkiewicz Multiplier Theorem (see \cite[Corollary 5.2.5]{Grafakos1}), 
the assertion is now a direct consequence of 
\eqref{est:fundsolvortpp_fct_multiplierEst} and \eqref{est:fundsolvortpp_grad_multiplierEst}.
\end{proof}

Using this result, we establish the pointwise estimates of $\fundsolvortpp$ asserted in Theorem \ref{thm:FundsolVortpp_est}
by means of the so-called transference principle for Fourier multipliers.

\begin{proof}[Proof of Theorem \ref{thm:FundsolVortpp_est}]
It suffices to consider $q\in(1,\infty)$.
Due to \eqref{eq:fundsolvortpp_series}, we have
\begin{equation}\label{eq:FundsolVortpp_TorusMult}
\D_x^\alpha\fundsolvortpp(\cdot,x)= \iFT_\torus\bb{\Mmultiplier_{\alpha,x}\FT_\torus\nb{\varphi_\gamma}}
\end{equation}
with
\[
\Mmultiplier_{\alpha,x}(k)\coloneqq \bp{1-\delta_\Z(k)} \snorm{k}^\gamma \D_x^\alpha\fundsolaux (x),
\qquad
\varphi_\gamma\coloneqq\iFT_\torus\bb{k\mapsto\bp{1-\delta_\Z(k)}\snorm{k}^{-\gamma}}.
\]
First, note that $\Mmultiplier_{\alpha,x}=\restriction{\mmultiplier_{\alpha,x}}{\Z}$.
Since $\mmultiplier_{\alpha,x}$ is a continuous $\LR{q}(\R)$ multiplier
by Lemma \ref{lem:FundsolVortpp_est_multiplier},
the transference principle 
(see \cite[Theorem B.2.1]{EdwardsGaudryBook} or \cite[Theorem 2.15]{EiterKyed_tplinNS_PiFbook})
implies that $\Mmultiplier_{\alpha,x}$
is an $\LR{q}(\torus)$ multiplier for any $q\in(1,\infty)$
and that
\[
\norm{\iFT_\torus\bb{\Mmultiplier_{\alpha,x}\FT_\torus\nb{f}}}_{\LR{q}(\torus)}
\leq C \snorm{x}^{-1-\snorm{\alpha}-2\gamma}\e^{- \Cexpfs\snorm{x}}\norm{f}_{\LR{q}(\torus)}.
\]
Moreover, we have
$\varphi_\gamma\in\LR{q}(\torus)$
provided $q<1/(1-\gamma)$, which is a direct consequence of
\cite[Example 3.1.19]{Grafakos1} for example.
Finally, the assertion follows from \eqref{eq:FundsolVortpp_TorusMult}.
\end{proof}

\section{Regularity results}\label{sec:Regularity}
 
Here we collect some results concerning the
regularity of weak solutions to \eqref{sys:NavierStokesTP_ExteriorDomain}
and its linearization,
which is given by
\begin{equation}\label{sys:NSlintp_extdomain}
\begin{pdeq}
\partial_t\uvel-\Delta\uvel - \rey\partial_1\uvel +\grad\upres &= f 
&& \tin\torus\times\Omega, 
\\
\Div\uvel &=0 && \tin\torus\times\Omega,
\\
\uvel & =\uvel_\ast
&&\ton\torus\times\partial\Omega.
\end{pdeq}
\end{equation}
First of all, we derive the following regularity theorem for solutions to 
\eqref{sys:NSlintp_extdomain} 
in the case $\proj f=0$.

\begin{lem}\label{lem:NSlintp_Regularity}
Let $\Omega\subset\R^3$ be an exterior domain of class $\CR{2}$,
let $\uvel_\ast$ be as in \eqref{cond:BoundaryData},
and let $f\in\LR{q}(\torus\times\Omega)$ for some $q\in(1,\infty)$.
Assume $\proj f=0$ and that $\uvel$ is a weak 
solution to \eqref{sys:NSlintp_extdomain},
that is, $\uvel=\uvel_\ast$ on $\torus\times\partial\Omega$,
$\Div\uvel=0$ and 
\begin{equation}\label{eq:NSlintp_weak}
\int_{\torus\times\Omega}\bb{-\uvel\cdot\partial_t\varphi
+\grad\uvel:\grad\varphi
-\rey\partial_1\uvel\cdot\varphi
}\,\dtx
=\int_{\torus\times\Omega} f\cdot\varphi\,\dtx
\end{equation}
for all $\varphi\in\CRcisigma(\torus\times\Omega)$.
Assume that $\uvel\in\LR{\infty}(\torus;\LR{2}(\Omega))^3$ and
$\grad\uvel\in\LR{2}(\torus\times\Omega)^{3\times3}$.
Then 
$\uvel\in\WSR{1,2}{q}(\torus\times\Omega)^3$,
and there exists $\upres\in\LR{q}(\torus;\DSR{1}{q}(\Omega))$
such that $\np{\uvel,\upres}$ is a strong solution to
\eqref{sys:NSlintp_extdomain}.
\end{lem}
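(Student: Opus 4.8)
The plan is to bootstrap regularity from the energy class to the maximal-regularity class $\WSR{1,2}{q}$ in two stages. First I would reduce to the whole-space problem by a localization/cut-off argument, and then exploit maximal $\LR q$-regularity for the time-periodic Oseen system \eqref{sys:NSlintp_TPFS} in $\R^3$. Concretely, fix a cut-off $\cutoff\in\CRci(\R^3)$ that equals $1$ on a neighborhood of $\R^3\setminus\Omega$ and is supported in a slightly larger ball. Using the zero-flux condition \eqref{cond:BoundaryData}$_4$ together with the Bogovski\u\i{} operator on the relevant bounded annulus, I would construct a time-periodic solenoidal extension $\mathfrak b$ of $\uvel_\ast$, supported in a fixed bounded region, with $\mathfrak b\in\WSR{1,2}{q}(\torus\times\Omega)$ (the required smoothness in time and space of $\mathfrak b$ follows from \eqref{cond:BoundaryData}; note that since $\proj f=0$ one also arranges $\proj\mathfrak b=0$, consistently with the purely periodic character of the problem). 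Replacing $\uvel$ by $\uvel-\mathfrak b$ reduces to a weak solution with homogeneous boundary data, still in the energy class, solving \eqref{sys:NSlintp_extdomain} with a new right-hand side in $\LR q(\torus\times\Omega)$ of bounded support.

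Next I would localize away from the body: writing $w\coloneqq (1-\cutoff)(\uvel-\mathfrak b)$, a direct computation shows $w$ solves a whole-space time-periodic Oseen system \eqref{sys:NSlintp_TPFS} with right-hand side $\tilde f$ consisting of the (compactly supported, $\LR q$) original forcing plus commutator terms involving $\grad\cutoff$, $\Delta\cutoff$ and $\partial_1\cutoff$ acting on $\uvel-\mathfrak b$; there is also a divergence term $\grad\cutoff\cdot(\uvel-\mathfrak b)$ which is again corrected by a Bogovski\u\i{} term. Since $\uvel\in\LR2(\torus;\LR6)$ and $\grad\uvel\in\LR2$, all these commutator terms lie at least in $\LR2(\torus\times K)$ for the relevant bounded set $K$; combined with the bounded support this gives $\tilde f\in\LR{s}(\torus\times\R^3)$ for some $s>1$, and then maximal regularity for the time-periodic Oseen operator in $\R^3$ (established in \cite{Kyed_FundsolTPStokes2016, GaldiKyed_TPSolNS3D_AsymptoticProfile, EiterKyed_etpfslns}, or via the representation $w=\fundsolvel\ast\tilde f$ together with the multiplier structure of \eqref{eq:tpfundsol_deffundsolcompl}) yields $w\in\WSR{1,2}{s}$ locally, hence $\uvel\in\WSR{1,2}{s}$ on a neighborhood of $\supp\cutoff$. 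A finite iteration (Sobolev embedding to upgrade $\uvel-\mathfrak b$ to higher integrability on bounded sets, feeding back into the commutator terms) raises the exponent from $2$ up to the target $q$, and in parallel the interior regularity near $\partial\Omega$ is handled by the known maximal-regularity theory for the time-periodic Oseen problem in bounded domains with $\CR2$ boundary. Patching the interior and exterior pieces gives $\uvel\in\WSR{1,2}{q}(\torus\times\Omega)$.

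Finally, the pressure is recovered in the usual way: from \eqref{eq:NSlintp_weak} the functional $\varphi\mapsto\int_{\torus\times\Omega}\bb{-\uvel\cdot\pt\varphi+\grad\uvel:\grad\varphi-\rey\partial_1\uvel\cdot\varphi-f\cdot\varphi}\dtx$ annihilates $\CRcisigma(\torus\times\Omega)$, so by the classical De Rham argument (applied on each fixed-time slice, or directly on $\torus\times\Omega$) there is $\upres$ with $\grad\upres$ equal to this residual; the $\WSR{1,2}{q}$-bounds just obtained show $\partial_t\uvel-\Delta\uvel-\rey\partial_1\uvel-f\in\LR q(\torus\times\Omega)$, and elliptic/negative-norm estimates then place $\grad\upres\in\LR q$ locally, giving $\upres\in\LR q(\torus;\DSR1q(\Omega))$. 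Testing against arbitrary (not necessarily solenoidal) $\varphi\in\CRci(\torus\times\Omega)^3$ and integrating by parts confirms that $(\uvel,\upres)$ is a strong solution.

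I expect the main obstacle to be the first, low-regularity rung of the bootstrap: at the energy level $\grad\uvel$ is only in $\LR2$, so the commutator right-hand side $\tilde f$ is a priori only in $\LR2$ in time-space on a bounded set, and one must be slightly careful that the time-periodic Oseen maximal-regularity estimate is available for that exponent (and that no time-average obstruction appears — this is where $\proj f=0$, hence $\proj\uvel$ essentially trivial modulo the harmonic Oseen part, is used to stay in the purely periodic setting where the multiplier in \eqref{eq:tpfundsol_deffundsolcompl} is nonsingular). Once the scheme is started, the remaining steps are routine iteration and standard elliptic/De Rham pressure recovery.
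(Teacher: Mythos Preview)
Your approach is viable in principle but differs substantially from the paper's, and is considerably more involved. The paper does not localize or bootstrap at all. Instead it proceeds in two short steps: (i) it directly invokes the existence of a strong solution $(\tuvel,\upres)\in\WSR{1,2}{q}(\torus\times\Omega)\times\LR{q}(\torus;\DSR{1}{q}(\Omega))$ to \eqref{sys:NSlintp_extdomain} in the exterior domain, by first extending the boundary data solenoidally to some $\Uvel\in\WSR{1,2}{q}$ and then citing \cite[Theorem~5.1]{GaldiKyed_TPflowViscLiquidpBody} for the homogeneous-boundary problem with right-hand side $f-\partial_t\Uvel+\Delta\Uvel+\rey\partial_1\Uvel$; (ii) it then shows $\uvel=\tuvel$ by a \emph{duality argument}: given any $\psi\in\CRci(\torus\times\Omega)^3$, it solves the \emph{adjoint} time-periodic Oseen problem (drift term $+\rey\partial_1$) with right-hand side $\projcompl\psi$ to obtain $\wvel\in\WSR{1,2}{2}\cap\WSR{1,2}{q'}$, inserts $\wvel$ as a test function in the weak formulations for both $\uvel$ and $\tuvel$, and integrates by parts to conclude $\int(\uvel-\tuvel)\cdot\psi=0$.

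Your cut-off/bootstrap route replaces the single citation in~(i) by a whole-space reduction plus a near-boundary estimate plus an iteration in the integrability exponent, and replaces the duality argument in~(ii) by an implicit identification step (your sentence ``maximal regularity \ldots\ yields $w\in\WSR{1,2}{s}$'' tacitly asserts that the particular weak solution $w$ coincides with the strong one produced by maximal regularity, which is precisely a weak--strong uniqueness statement). The paper's route is shorter and avoids all patching of interior and exterior pieces; your route is more self-contained in that it does not need the exterior-domain maximal-regularity theorem as a black box, but the uniqueness step you gloss over still requires a justification of the same nature as the paper's duality argument, and your treatment of the region near $\partial\Omega$ (``handled by the known maximal-regularity theory \ldots\ in bounded domains'') would need to be spelled out with comparable care.
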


\begin{proof}
First of all, using classical arguments 
(see \cite[Section III.3]{GaldiBookNew} for example)
one can show the existence of a function 
$\Uvel\in\WSR{1,2}{q}(\torus\times\Omega)^3$
such that $\Uvel=\uvel_\ast$ on $\torus\times\partial\Omega$ and
$\Div\Uvel=0$.
Moreover, since $\proj f=0$, 
by \cite[Theorem 5.1]{GaldiKyed_TPflowViscLiquidpBody}
there exist $\zvel\in\WSR{1,2}{q}(\torus\times\Omega)^3$ and
$\upres\in\LR{q}(\torus;\DSR{1}{q}(\Omega))$
such that
\begin{equation}
\begin{pdeq}
\partial_t\zvel-\Delta\zvel - \rey\partial_1\zvel +\grad\upres 
&= f - \partial_t\Uvel-\Delta\Uvel - \rey\partial_1\Uvel
&& \tin\torus\times\Omega, 
\\
\Div\zvel &=0 && \tin\torus\times\Omega,
\\
\zvel & =0
&&\ton\torus\times\partial\Omega.
\end{pdeq}
\end{equation}
Then $\np{\tuvel,\upres}\coloneqq\np{\zvel+\Uvel,\upres}$
solves \eqref{sys:NSlintp_extdomain},
and for the completion of the proof it remains to show $\uvel=\tuvel$.
For this purpose, we employ a duality argument.
Let $\psi\in\CRci(\torus\times\Omega)^3$.
By \cite[Theorem 5.1]{GaldiKyed_TPflowViscLiquidpBody}
there exist functions
$\wvel\in\WSR{1,2}{2}(\torus\times\Omega)^3
\cap\WSR{1,2}{q'}(\torus\times\Omega)^3$
and 
$\wpres\in\LR{2}(\torus;\DSR{1}{2}(\Omega))
\cap\LR{q'}(\torus;\DSR{1}{q'}(\Omega))$,
where $q'=q/(q-1)$,
which satisfy
\begin{equation}\label{sys:NSlintp_extdomain_hombdry}
\begin{pdeq}
\partial_t\wvel-\Delta\wvel + \rey\partial_1\wvel +\grad\wpres &= \projcompl\psi 
&& \tin\torus\times\Omega, 
\\
\Div\wvel &=0 && \tin\torus\times\Omega,
\\
\wvel & =0
&&\ton\torus\times\partial\Omega.
\end{pdeq}
\end{equation}
By a standard density argument one shows that we can let $\varphi=\wvel$ in 
the weak formulation \eqref{eq:NSlintp_weak}.
Then, by an integration by parts, we get
\[
\begin{aligned}
\int_{\torus\times\Omega}\np{\uvel-\tuvel}\cdot\projcompl\psi\,\dtx
&=\int_{\torus\times\Omega}\np{\uvel-\tuvel}
\cdot\bp{\partial_t\wvel-\Delta\wvel + \rey\partial_1\wvel +\grad\wpres}\,\dtx
\\
&=\int_{\torus\times\Omega}\bb{\uvel\cdot\partial_t\wvel+\grad\uvel:\grad\wvel
- \rey\partial_1\uvel\cdot\wvel}\,\dtx
\\
&\qquad\qquad-\int_{\torus\times\Omega} \bp{\partial_t\tuvel-\Delta\tuvel - \rey\partial_1\tuvel +\grad\upres}\cdot\wvel\,\dtx 
\\
&=\int_{\torus\times\Omega} f\cdot\wvel-\int_{\torus\times\Omega} f\cdot\wvel
=0.
\end{aligned}
\]
Since $\proj\uvel=\proj\tuvel=0$, we thus conclude
\[
\int_{\torus\times\Omega}\np{\uvel-\tuvel}\cdot\psi\,\dtx
=\int_{\torus\times\Omega}\np{\uvel-\tuvel}\cdot\proj\psi\,\dtx
+\int_{\torus\times\Omega}\np{\uvel-\tuvel}\cdot\projcompl\psi\,\dtx
=0
\]
for arbitrary $\psi\in\CRci\np{\torus\times\Omega}^3$,
which implies $\uvel=\tuvel$ and completes the proof.
\end{proof}

Based on this result for the linearized problem \eqref{sys:NSlintp_extdomain}, 
we can now show that the additional integrability condition
assumed in \eqref{el:VelocityAdditionalIntegrability}
leads to higher regularity of the weak solution.

\begin{lem}\label{lem:NavierStokesTP_Regularity}
In the hypotheses of Theorem \ref{thm:VorticityDecay}
we have $\uvel=\vvel+\wvel$, with 
$\vvel=\proj\uvel$, $\wvel=\projcompl\uvel$ such that
\begin{align}
&\forall q\in(1,\infty):\, \vvel\in\DSR{2}{q}(\Omega),\ \
\forall r\in(\frac{4}{3},\infty]:\, \vvel\in\DSR{1}{r}(\Omega), \ \
\forall s\in(2,\infty]:\, \vvel\in\LR{s}(\Omega),
\label{el:NavierStokesTP_Regularity_ss}
\\
&\forall q\in(1,\infty): \ \wvel\in\WSR{1,2}{q}(\torus\times\Omega).
\label{el:NavierStokesTP_Regularity_pp}
\end{align}
Moreover, there exists a pressure field $\upres$ with 
\begin{equation}\label{el:NavierStokesTP_Regularity_pressure}
\forall q\in(1,\infty): \ \upres\in\LR{q}(\torus;\DSR{1}{q}(\Omega))
\end{equation}
such that \eqref{sys:NavierStokesTP_ExteriorDomain} 
is satisfied in the strong sense.
\end{lem}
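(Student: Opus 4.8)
The plan is to exploit the decomposition $\uvel=\vvel+\wvel$, $\vvel\coloneqq\proj\uvel$, $\wvel\coloneqq\projcompl\uvel$, and to treat the two parts by the linear theories already at our disposal: the steady-state Oseen $\LR q$-theory for $\vvel$ and Lemma \ref{lem:NSlintp_Regularity} for $\wvel$. Applying $\proj$ and $\projcompl$ to the weak formulation in Definition \ref{def:WeakSolution_NStp} (equivalently, testing with the time-independent field $\proj\varphi$ and with $\projcompl\varphi$, both of which stay in $\CRcisigma(\torus\times\Omega)$ for $\varphi\in\CRcisigma(\torus\times\Omega)$) and using $\proj(\vvel\cdot\grad\wvel)=\proj(\wvel\cdot\grad\vvel)=0$, one finds that $\vvel$ is a weak solution of the steady Oseen system \eqref{sys:NSlinss_TPFS} in $\Omega$ with boundary datum $\proj\uvel_\ast$ and right-hand side $\proj f-\vvel\cdot\grad\vvel-\proj(\wvel\cdot\grad\wvel)$, while $\wvel$ is a weak solution of the linearized problem \eqref{sys:NSlintp_extdomain} with boundary datum $\projcompl\uvel_\ast$, vanishing time mean of the forcing, and right-hand side $\projcompl f-\projcompl(\uvel\cdot\grad\uvel)$. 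Both boundary data belong to the class \eqref{cond:BoundaryData} and have zero flux across $\partial\Omega$ because $\uvel_\ast$ does.

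I would then bootstrap. From Definition \ref{def:WeakSolution_NStp} we have $\grad\uvel\in\LR2(\torus\times\Omega)$ and $\uvel\in\LR2(\torus;\LR6(\Omega))$, hence $\vvel\in\LR6(\Omega)$ by Minkowski's inequality, and $\projcompl\uvel\in\LR\infty(\torus;\LR2(\Omega))$; together with the Serrin-type hypothesis \eqref{el:VelocityAdditionalIntegrability}, interpolation puts $\wvel$ into a suitable mixed-norm space $\LR{s}(\torus;\LR{p}(\Omega))$, and H\"older's inequality places $\vvel\cdot\grad\vvel$, the cross terms $\vvel\cdot\grad\wvel$, $\wvel\cdot\grad\vvel$, and $\wvel\cdot\grad\wvel$ into $\LR{q_0}(\torus\times\Omega)$ for some $q_0>1$. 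Feeding $\projcompl f-\projcompl(\uvel\cdot\grad\uvel)\in\LR{q_0}$ into Lemma \ref{lem:NSlintp_Regularity} yields $\wvel\in\WSR{1,2}{q_0}(\torus\times\Omega)$ and a purely periodic pressure $\projcompl\upres\in\LR{q_0}(\torus;\DSR1{q_0}(\Omega))$; feeding $\proj f-\vvel\cdot\grad\vvel-\proj(\wvel\cdot\grad\wvel)\in\LR{q_0}$ into the $\LR{q_0}$-theory of the exterior Oseen problem (see \cite[Ch.~VII]{GaldiBookNew}) yields $\vvel\in\DSR2{q_0}(\Omega)$ together with a steady pressure $\proj\upres$. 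Anisotropic Sobolev embeddings for $\WSR{1,2}{q_0}$ (where $\partial_t$ counts as two spatial derivatives) and the classical embeddings for $\DSR2{q_0}\cap\DSR12$ upgrade the integrability of $\uvel$ and $\grad\uvel$, hence of all quadratic terms, from $q_0$ to some $q_1>q_0$, and one repeats the step.

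The iteration stops after finitely many rounds: once the exponent passes the relevant thresholds, $\wvel$ is bounded (indeed H\"older continuous) on $\torus\times\Omega$ and $\vvel$ is bounded with bounded gradient, so $\uvel\cdot\grad\uvel\in\LR q(\torus\times\Omega)$ for \emph{every} $q\in(1,\infty)$; a last application of Lemma \ref{lem:NSlintp_Regularity} and of the Oseen $\LR q$-theory gives \eqref{el:NavierStokesTP_Regularity_pp} and $\vvel\in\DSR2{q}(\Omega)$ for all $q\in(1,\infty)$. The remaining statements in \eqref{el:NavierStokesTP_Regularity_ss}, i.e.\ $\vvel\in\DSR1{r}(\Omega)$ for $r>4/3$ and $\vvel\in\LR s(\Omega)$ for $s>2$, then follow by combining $\grad\vvel\in\LR2(\Omega)\cap\DSR1{q}(\Omega)$ (all $q$) with Gagliardo--Nirenberg/Sobolev inequalities and the asymptotic properties of the exterior Oseen problem with the (now) compactly supported and rapidly decaying forcing, in accordance with the decay \eqref{est:fundsolss_Decay} of $\fundsolvelss$. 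Setting $\upres\coloneqq\proj\upres+\projcompl\upres$ gives \eqref{el:NavierStokesTP_Regularity_pressure}, and a density argument shows that $(\uvel,\upres)$ solves \eqref{sys:NavierStokesTP_ExteriorDomain} in the strong sense.

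I expect the main difficulty to be the bookkeeping of the Lebesgue exponents through the bootstrap, made delicate by the fact that $\vvel$ and $\wvel$ are governed by \emph{different} linear regularity theories yet remain coupled through the quadratic nonlinearity via the cross terms $\vvel\cdot\grad\wvel$ and $\wvel\cdot\grad\vvel$; in particular one has to verify that the threshold $r>5$ in \eqref{el:VelocityAdditionalIntegrability} is exactly what is needed to start the iteration and that each step strictly increases the exponent, so that the full range $q\in(1,\infty)$ is eventually reached.
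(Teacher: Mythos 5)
Your overall architecture is the same as the paper's: split $\uvel=\vvel+\wvel$, obtain the steady-state problem for $\vvel$ and the purely-periodic linearized Oseen problem for $\wvel$, start from the integrability supplied by Definition \ref{def:WeakSolution_NStp} plus \eqref{el:VelocityAdditionalIntegrability}, and bootstrap via Lemma \ref{lem:NSlintp_Regularity} and anisotropic embeddings for $\WSR{1,2}{q}$. The one genuine deviation is how you treat $\vvel$: you move $\vvel\cdot\grad\vvel$ to the right-hand side and invoke the \emph{linear} exterior Oseen $\LR{q}$-theory, whereas the paper keeps the nonlinearity on the left and calls directly on the steady-state \emph{Navier--Stokes} regularity theory (\cite[Lemma~X.6.1, Theorem~X.6.4]{GaldiBookNew}). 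The paper's choice is not merely cosmetic: because the nonlinear steady-state theory already handles $\vvel\cdot\grad\vvel$ internally, a single application to the forcing $\proj f-\proj(\wvel\cdot\grad\wvel)\in\LR1\cap\LR{3/2}$ immediately delivers the full package $\vvel\in\DSR2{q}(\Omega)\cap\DSR1{r}(\Omega)\cap\LR{s}(\Omega)$ with the explicit exponent ranges in \eqref{el:regularity_ss1}, so only $\wvel$ has to be bootstrapped. Your variant needs a joint iteration in both $\vvel$ and $\wvel$, coupled through the cross terms; this is doable but adds a second layer of exponent bookkeeping.

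The genuine gap is exactly where you suspect it: you do not actually verify that the bootstrap closes. In the paper this is the bulk of the proof, carried out via the explicit implications \eqref{impl:bootstrap_1}--\eqref{impl:bootstrap_5}, which depend on the anisotropic embedding $\WSR{1,2}{q}\hookrightarrow\LR{5q/(5-2q)}$, $\grad\wvel\in\LR{5q/(5-q)}$ from \cite[Theorem~4.1]{GaldiKyed_TPflowViscLiquidpBody}, on the improved $\vvel$-regularity re-fed at each stage, and, crucially, on the Serrin exponent $r>5$ from \eqref{el:VelocityAdditionalIntegrability} to make the gain per step positive in the range $q\in(1,15/8)$. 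Asserting ``each step strictly increases the exponent'' is precisely what must be proved, and the threshold $r>5$ is where the proof would fail if it were weakened, so sketching that the bookkeeping ``should'' work out is not yet a proof of the $\forall q\in(1,\infty)$ statement. Two smaller inaccuracies: the forcing $\proj f-\proj(\wvel\cdot\grad\wvel)$ is \emph{not} compactly supported (only $\proj f$ is), so one cannot appeal to the compact-support version of the Oseen representation; and the route to $\vvel\in\LR{s}$ for $s>2$ and $\grad\vvel\in\LR{r}$ for $r>4/3$ really does require the sharp $\LR{q}$-theory for the exterior steady-state problem (the paper's citation of Lemma~X.6.1/Theorem~X.6.4), not merely interior Gagliardo--Nirenberg estimates, because the exponents encode the decay at infinity.
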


\begin{proof}
At first, observe that $\vvel$ is a weak solution to
the steady-state Navier--Stokes problem
\begin{equation}\label{sys:NavierStokesTP_ExteriorDomain_sspart}
\begin{pdeq}
-\Delta\vvel-\rey\partial_1\vvel+\vvel\cdot\grad\vvel+\grad\vpres
&=\proj f-\proj\nb{\wvel\cdot\grad\wvel}
&&\tin\Omega, \\
\Div\vvel&=0 
&&\tin\Omega, \\
\vvel&=\proj\uvel_\ast 
&&\ton\partial\Omega.
\end{pdeq}
\end{equation}
H\"older's inequality and Definition \ref{def:WeakSolution_NStp} yield
$\wvel\cdot\grad\wvel
\in\LR{1}(\torus;\LR{3/2}(\Omega))\cap\LR{2}(\torus;\LR{1}(\Omega))$.
Therefore, we have
$\proj f-\proj(\wvel\cdot\grad\wvel)\in\LR{1}(\Omega)\cap\LR{3/2}(\Omega)$,
and \cite[Lemma X.6.1]{GaldiBookNew} implies
\begin{equation}\label{el:regularity_ss1}
\forall q\in(1,\frac{3}{2}]:\, \vvel\in\DSR{2}{q}(\Omega),\ \
\forall r\in(\frac{4}{3},3]:\, \vvel\in\DSR{1}{r}(\Omega), \ \
\forall s\in(2,\infty):\, \vvel\in\LR{s}(\Omega)
\end{equation}
and the existence of $\vpres\in\DSR{1}{q}(\Omega)$ for all $q\in(1,\frac{3}{2}]$
such that \eqref{sys:NavierStokesTP_ExteriorDomain_sspart} is satisfied 
in the strong sense.
Moreover, 
$\wvel$ is a weak solution to
\begin{equation}\label{sys:NavierStokesTP_ExteriorDomain_pppart}
\begin{pdeq}
\partial_t\wvel-\Delta\wvel - \rey\partial_1\wvel +\grad\wpres 
&= \projcompl f -\vvel\cdot\grad\wvel-\wvel\cdot\grad\vvel
-\projcompl\np{\wvel\cdot\grad\wvel}
&& \tin\torus\times\Omega, 
\\
\Div\wvel &=0 && \tin\torus\times\Omega,
\\
\wvel & =\projcompl\uvel_\ast
&&\ton\torus\times\partial\Omega.
\end{pdeq}
\end{equation}
By a standard interpolation argument, the assumptions from 
Definition \ref{def:WeakSolution_NStp} imply
$\wvel\in\LR{10/3}(\torus\times\Omega)$.
Since $\vvel\in\LR{10/3}(\Omega)$ by \eqref{el:regularity_ss1}
and $\grad\uvel\in\LR{2}(\torus\times\Omega)$ by assumption,
this implies
\begin{equation}\label{el:rhs_pp}
\projcompl f -\vvel\cdot\grad\wvel-\wvel\cdot\grad\vvel
-\projcompl\np{\wvel\cdot\grad\wvel} \in\LR{s}(\torus\times\Omega)
\end{equation}
for $s=5/4$.
Now Lemma \ref{lem:NSlintp_Regularity} shows 
that there exists $\wpres$
such that
\begin{equation}\label{el:regularity_pp_5/4}
\wvel\in\WSR{1,2}{5/4}(\torus\times\Omega), \ 
\wpres\in\LR{5/4}(\torus;\DSR{1}{5/4}(\Omega)),
\end{equation}
and \eqref{sys:NavierStokesTP_ExteriorDomain_pppart}
holds in a strong sense.
Starting from \eqref{el:regularity_pp_5/4},
we now employ a boot-strap argument to conclude the proof.

If $\wvel\in\WSR{1,2}{q}(\torus\times\Omega)$ 
for some $q\in(1,15/8)$,
then the embedding theorem from 
\cite[Theorem 4.1]{GaldiKyed_TPflowViscLiquidpBody}
implies $\grad\wvel\in\LR{5q/(5-q)}(\torus\times\Omega)$. 
In virtue of \eqref{el:VelocityAdditionalIntegrability} 
and \eqref{el:regularity_ss1},
this implies $\wvel\cdot\grad\wvel,\
\vvel\cdot\grad\wvel\in\LR{s}(\torus\times\Omega)$
for $\frac{1}{s}=\frac{1}{q}+\frac{1}{r}-\frac{1}{5}$.
Moreover, \cite[Theorem 4.1]{GaldiKyed_TPflowViscLiquidpBody}
yields $\wvel\in\LR{5q/(5-q)}(\torus;\LR{15q/(15-8q)}(\Omega))$,
so that
$\wvel\cdot\grad\vvel\in\LR{s}$ by \eqref{el:regularity_ss1}.
In total, we thus obtain 
\eqref{el:rhs_pp} for $\frac{1}{s}=\frac{1}{q}+\frac{1}{r}-\frac{1}{5}$,
and Lemma \ref{lem:NSlintp_Regularity} leads to the implication
\begin{equation}\label{impl:bootstrap_1}
\exists\, q\in\bp{1,\frac{15}{8}}:\, \wvel\in\WSR{1,2}{q}(\torus\times\Omega)
\implies
\forall\, \frac{1}{s}\in\bb{\frac{1}{q}+\frac{1}{r}-\frac{1}{5},\frac{1}{q}}:\,
\wvel\in\WSR{1,2}{s}(\torus\times\Omega).
\end{equation}

If $\wvel\in\WSR{1,2}{q}(\torus\times\Omega)$ 
for some $q\in[5/3,5/2)$,
then \cite[Theorem 4.1]{GaldiKyed_TPflowViscLiquidpBody}
yields $\wvel\in\LR{5q/(5-2q)}(\torus\times\Omega)$ and
$\grad\wvel\in\LR{5q/(5-q)}(\torus\times\Omega)$,
which implies $\wvel\cdot\grad\wvel\in\LR{s_1}(\torus\times\Omega)$
for all $\frac{1}{s_1}\in[\frac{2}{q}-\frac{3}{5},\frac{2}{q}]$.
Hence we have $\proj(\wvel\cdot\grad\wvel)\in\LR{s_1}(\Omega)$,
and another application of \cite[Lemma X.6.1]{GaldiBookNew} in view of 
\eqref{el:regularity_ss1} yields $\grad\vvel\in\LR{t}(\Omega)$ for all $t\in[4/3,15/4]$ and $\vvel\in\LR{t}(\Omega)$ for all $t\in(2,\infty]$.
We thus conclude
$\wvel\cdot\grad\vvel\in\LR{s_2}(\torus\times\Omega)$ for 
$\frac{1}{s_2}\in[\frac{1}{q}-\frac{2}{15},\frac{1}{q}+\frac{3}{4}]$
and $\vvel\cdot\grad\wvel\in\LR{s_3}(\torus\times\Omega)$ 
for $\frac{1}{s_3}\in[\frac{1}{q}-\frac{1}{5},\frac{1}{q}+\frac{1}{2}]$.
In particular, we obtain \eqref{el:rhs_pp}
for $\frac{1}{s}=\frac{2}{q}-\frac{3}{5}$ if $q\leq15/7$,
and $\frac{1}{s}=\frac{1}{q}-\frac{2}{15}$ if $q\geq15/7$.
By Lemma \ref{lem:NSlintp_Regularity}, this implies
\begin{align}
\exists\, q\in\big(\frac{5}{3},\frac{15}{7}\big]:\, \wvel\in\WSR{1,2}{q}(\torus\times\Omega)
\implies
&\forall\, \frac{1}{s}\in\bb{\frac{2}{q}-\frac{3}{5},\frac{1}{q}}:\,
\wvel\in\WSR{1,2}{s}(\torus\times\Omega),
\label{impl:bootstrap_2} 
\\
\exists\, q\in\big[\frac{15}{7},\frac{5}{2}\big):\, \wvel\in\WSR{1,2}{q}(\torus\times\Omega)
\implies
&\forall\, \frac{1}{s}\in\bb{\frac{1}{q}-\frac{2}{15},\frac{1}{q}}:\,
\wvel\in\WSR{1,2}{s}(\torus\times\Omega).
\label{impl:bootstrap_3}
\end{align}

Arguing in a similar fashion, one shows the further implications
\begin{align}
\exists\, q\in\big[\frac{5}{2},5):\, \wvel\in\WSR{1,2}{q}(\torus\times\Omega)
\implies
&\forall\, \frac{1}{s}\in\big(\frac{1}{q}-\frac{1}{5},\frac{1}{q}\big]:\,
\wvel\in\WSR{1,2}{s}(\torus\times\Omega),
\label{impl:bootstrap_4} 
\\
\exists\, q\in[5,\infty):\, \wvel\in\WSR{1,2}{q}(\torus\times\Omega)
\implies
&\forall\, s\in[q,\infty):\,
\wvel\in\WSR{1,2}{s}(\torus\times\Omega).
\label{impl:bootstrap_5}
\end{align}
Using now \eqref{el:regularity_pp_5/4} as starting point, 
we can iteratively employ \eqref{impl:bootstrap_1}--\eqref{impl:bootstrap_5}
to obtain $\wvel\in\WSR{1,2}{s}(\torus\times\Omega)$ 
for all $s\in[5/4,\infty)$.
Firstly, this 
yields $\wvel\cdot\grad\wvel\in\LR{\infty}(\torus\times\Omega)$,
so that
$\proj f-\proj\np{\wvel\cdot\grad\wvel}\in\LR{q}(\Omega)$ 
for all $q\in[1,\infty)$.
Now \eqref{el:NavierStokesTP_Regularity_ss} 
is a direct consequence of \cite[Theorem X.6.4]{GaldiBookNew}.
Secondly, this shows that
\eqref{el:rhs_pp} holds for all $s\in[1,\infty)$,
whence Lemma \ref{lem:NSlintp_Regularity} 
implies \eqref{el:NavierStokesTP_Regularity_pp}.
Finally, the claimed regularity \eqref{el:NavierStokesTP_Regularity_pressure}
of $\upres=\vpres+\wpres$ is
a direct consequence.
This completes the proof.
\end{proof}
 
\section{The fixed-point problem}
\label{sec:FixedPointProblem}

In this section we derive a suitable fixed-point equation 
satisfied by weak solutions in the whole space,
and we introduce the necessary functional framework.
More precisely, the main focus of the subsequent analysis
lies on the study of problem \eqref{sys:NavierStokesTP_ExteriorDomain}
when $\Omega=\R^3$, namely, 
\begin{equation}\label{sys:NavierStokesTP}
\begin{pdeq}
\partial_t\uvel-\Delta\uvel-\rey\partial_1\uvel+\uvel\cdot\grad\uvel+\grad\upres&=f
&&\tin\torus\times\R^3, \\
\Div\uvel&=0 
&&\tin\torus\times\R^3, \\
\lim_{\snorm{x}\to\infty}\uvel(t,x)&=0 
&&\tfor t\in\torus\,.
\end{pdeq}
\end{equation}
The case of an exterior domain will be treated
at the end of the last section.
\par
We begin to observe that asymptotic properties of weak solutions to \eqref{sys:NavierStokesTP}
were studied in 
\cite{GaldiKyed_TPSolNS3D_AsymptoticProfile}
and \cite{Eiter_SpatiallyAsymptoticStructureTPNS_2020},
where the following
decay estimates of $\uvel$ and $\grad\uvel$ were derived.

\begin{thm}\label{thm:VelocityDecay}
Let $\rey>0$ and $f\in\LR{q}(\torus\times\R^3)^3$ for all $q\in(1,\infty)$
and let $\supp f$ be compact. 
Let $\uvel$ be a weak solution to \eqref{sys:NavierStokesTP} that satisfies \eqref{el:VelocityAdditionalIntegrability} 
for $\Omega=\R^3$.
Then there is $ \CVelocityDecay>0$
such that for all $\np{t,x}\in\torus\times\R^3$ 
the function $\uvel$ satisfies
\begin{align}
\snorm{\proj\uvel(x)}
&\leq \CVelocityDecay\bb{\bp{1+\snorm{x}}\bp{1+\wakefct{\rey x}}}^{-1}, 
\label{est:VelocityDecay_fct_ss}
\\
\snorm{\grad\proj\uvel(x)}
&\leq \CVelocityDecay\bb{\bp{1+\snorm{x}}\bp{1+\wakefct{\rey x}}}^{-\frac{3}{2}},
\label{est:VelocityDecay_grad_ss}
\\
\snorm{\projcompl\uvel(t,x)}
&\leq \CVelocityDecay\bp{1+\snorm{x}}^{-3},
\label{est:VelocityDecay_fct_pp}
\\
\snorm{\grad\projcompl\uvel(t,x)}
&\leq \CVelocityDecay\bp{1+\snorm{x}}^{-4}.
\label{est:VelocityDecay_grad_pp}
\end{align}
\end{thm}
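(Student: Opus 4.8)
The plan is to express the steady part $\vvel=\proj\uvel$ and the purely periodic part $\wvel=\projcompl\uvel$ of the solution through the two Oseen fundamental solutions $\fundsolvelss$, $\fundsolvelpp$ and to transfer the decay of these kernels to $\uvel$ and $\grad\uvel$ via the convolution estimates of Theorems~\ref{thm:ConvFundsolss} and~\ref{thm:ConvFundsolpp}. First I would invoke Lemma~\ref{lem:NavierStokesTP_Regularity} with $\Omega=\R^3$ (no boundary data): it yields $\uvel=\vvel+\wvel$ where, using the elementary cancellations $\proj(\vvel\cdot\grad\wvel)=\proj(\wvel\cdot\grad\vvel)=0$, $\vvel$ is a strong solution of the steady Oseen system with right-hand side $h_{\vvel}:=\proj f-\vvel\cdot\grad\vvel-\proj(\wvel\cdot\grad\wvel)$ and $\wvel$ a strong solution of the time-periodic Oseen system with right-hand side $h_{\wvel}:=\projcompl f-\vvel\cdot\grad\wvel-\wvel\cdot\grad\vvel-\projcompl(\wvel\cdot\grad\wvel)$; moreover $\vvel,\grad\vvel\in\LR{\infty}(\R^3)$ and $\wvel,\grad\wvel\in\LR{\infty}(\torus\times\R^3)$, all vanishing as $\snorm{x}\to\infty$, and $h_{\vvel}$, $h_{\wvel}$ lie in $\LR q$ over the respective domains for $q$ in an explicit range.

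Since the Oseen problems in the whole space possess unique solutions vanishing at infinity in the relevant classes, and since $\fundsolvel=\fundsolvelss\otimes\onedist_{\torus}+\fundsolvelpp$ generates the solution operator of the linearization, one then has
\[
\vvel=\fundsolvelss\ast h_{\vvel},\quad \grad\vvel=\grad\fundsolvelss\ast h_{\vvel},\qquad
\wvel=\fundsolvelpp\ast_{\grp}h_{\wvel},\quad \grad\wvel=\grad\fundsolvelpp\ast_{\grp}h_{\wvel}.
\]
The pieces of these convolutions produced by $\proj f$ and $\projcompl f$ have compact spatial support and therefore already satisfy the claimed rates: $\fundsolvelss\ast\proj f$ and $\grad\fundsolvelss\ast\proj f$ obey \eqref{est:VelocityDecay_fct_ss}--\eqref{est:VelocityDecay_grad_ss} by Theorem~\ref{thm:ConvFundsolss} (its hypotheses hold for any $A,B$ when the data are compactly supported), and $\fundsolvelpp\ast_{\grp}\projcompl f$, $\grad\fundsolvelpp\ast_{\grp}\projcompl f$ obey \eqref{est:VelocityDecay_fct_pp}--\eqref{est:VelocityDecay_grad_pp} by Theorem~\ref{thm:ConvFundsolpp}. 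It remains to bound the contributions of the quadratic terms, which I would do by a bootstrap whose initial step is a contraction argument.

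For the bootstrap: if $\vvel,\grad\vvel$ satisfy weighted pointwise bounds with exponents $a$ and $a+\tfrac12$ ($a\le1$), modelled on \eqref{est:VelocityDecay_fct_ss}--\eqref{est:VelocityDecay_grad_ss}, and $\wvel,\grad\wvel$ isotropic bounds with exponents $b$ and $b+1$ ($b\le3$), then $\vvel\cdot\grad\vvel$ decays with weighted exponents $(A,B)=(2a+\tfrac12,2a+\tfrac12)$, the terms $\vvel\cdot\grad\wvel$ and $\wvel\cdot\grad\vvel$ decay at least like $\snorm{x}^{-(a+b+1/2)}$, and $\projcompl(\wvel\cdot\grad\wvel)$ like $\snorm{x}^{-(2b+1)}$; feeding these into Theorems~\ref{thm:ConvFundsolss} and~\ref{thm:ConvFundsolpp}---keeping, crucially, the spatial derivative on the \emph{data} rather than moving it onto the kernel, so as to stay strictly above the threshold $A+\min\{1,B\}>3$---returns strictly larger exponents, and after finitely many steps one reaches $a=1$, $b=3$, i.e.\ \eqref{est:VelocityDecay_fct_ss}--\eqref{est:VelocityDecay_grad_pp}. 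To produce a first quantitative rate from the mere $\LR q$-regularity of Step~1, one splits each quadratic right-hand side into a compactly supported near-field part (again handled by Theorems~\ref{thm:ConvFundsolss}--\ref{thm:ConvFundsolpp}) and a far-field part that is small because $\vvel,\grad\vvel,\wvel,\grad\wvel$ vanish at infinity, and runs a contraction-mapping argument in the target decay class on the far-field region---in the spirit of the classical treatment of the steady exterior problem---whose output coincides with $\uvel$ by the uniqueness already invoked.

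The genuinely delicate points are two. First, the \emph{borderline exponents}: the Oseen self-interaction $\vvel\otimes\vvel$ lies exactly on the threshold of Theorem~\ref{thm:ConvFundsolss}(i), $A+\min\{1,B\}=3$, so one is forced to keep the derivative on $\vvel$ and work with $\vvel\cdot\grad\vvel$ (for which $(A,B)=(\tfrac52,\tfrac52)$ is safely above threshold); correspondingly, reaching the sharp rate $\snorm{x}^{-4}$ for $\grad\wvel$ in \eqref{est:VelocityDecay_grad_pp} is what forces the extra integrability assumption~\eqref{el:VelocityAdditionalIntegrability}. Second, the \emph{anisotropic bookkeeping}: tracking the $\wakefct{\rey x}$-factors through all the products and convolutions---so that $\vvel$ genuinely inherits the anisotropic Oseen rates rather than merely isotropic ones---is the bulk of the work, and it is precisely to organize this that Theorem~\ref{thm:ConvFundsolss} is stated in its anisotropic form.
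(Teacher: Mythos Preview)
The paper does not prove this theorem in-house: its proof is a two-line citation to \cite{Eiter_SpatiallyAsymptoticStructureTPNS_2020} and \cite{GaldiKyed_TPSolNS3D_AsymptoticProfile}, together with the remark that the smoothness assumption on $f$ there can be relaxed. Your sketch reproduces exactly the machinery of those references---representation of $\vvel$ and $\wvel$ through $\fundsolvelss$ and $\fundsolvelpp$, uniqueness for the whole-space Oseen problem, the convolution inequalities of Theorems~\ref{thm:ConvFundsolss}--\ref{thm:ConvFundsolpp}, and a near/far-field contraction---and it is also the template the present paper itself runs in Sections~\ref{sec:FixedPointProblem}--\ref{sec:ConclusionProof} for the vorticity. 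So your proposal is correct and essentially the intended approach.

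One clarification worth making: the ``bootstrap'' you describe, with exponents $(a,b)$ creeping up to $(1,3)$, is not quite how the argument closes. Theorems~\ref{thm:ConvFundsolss} and~\ref{thm:ConvFundsolpp} as stated do not yield intermediate rates; once their thresholds $A+\min\{1,B\}>3$ (resp.\ $A>3$, $A>4$) are met, they output the \emph{final} exponents $-1,-\tfrac32,-3,-4$ in one step. The actual mechanism is the one you describe in the next sentence: split off the near-field with a cut-off at radius $\radius$, run the contraction directly in the target decay class on $\torus\times\ball^\radius$ (the far-field nonlinearity is small since $\uvel,\grad\uvel\to0$), and then identify the fixed point with $\restriction{\uvel}{\torus\times\ball^\radius}$ by uniqueness in the weaker class. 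Your observations about the borderline $\vvel\otimes\vvel$ forcing the nonlinearity to be written as $\vvel\cdot\grad\vvel$ (equivalently $\curl\uvel\wedge\uvel$, cf.\ Proposition~\ref{prop:RepresentationVorticity}) and about \eqref{el:VelocityAdditionalIntegrability} being what ultimately secures the sharp $\snorm{x}^{-4}$ for $\grad\wvel$ are both correct.
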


\begin{proof}
Under the assumption $f\in\CRci(\torus\times\R^3)^3$,
this result was shown in \cite[Theorem 4.5]{Eiter_SpatiallyAsymptoticStructureTPNS_2020}
based on estimates of the velocity field $\uvel$ derived in \cite{GaldiKyed_TPSolNS3D_AsymptoticProfile}.
However, a careful study of the proofs shows 
that these results continue to be valid under the stated weaker assumption on $f$.
\end{proof}

To derive a suitable fixed-point equation, 
we exploit the following representation formulas
that result from the time-periodic fundamental solutions introduced in the previous section.

\begin{prop}\label{prop:RepresentationVorticity}
Let $\uvel$ be a weak solution as in Theorem \ref{thm:VelocityDecay}. 
Then 
\begin{equation}\label{eq:RepresentationVelocity_CurluWedgeu}
D_x^\alpha\uvel=D_x^\alpha\fundsolvel\ast\nb{f-\curl\uvel\wedge\uvel}
\end{equation}
for all $\alpha\in\N_0^3$ with $\snorm{\alpha}\leq1$.
In particular, the steady-state part
$\vvel\coloneqq\proj\uvel$ and the purely periodic part $\wvel\coloneqq\projcompl\uvel$ satisfy 
\begin{align}
D_x^\alpha\vvel&=D_x^\alpha\fundsolvelss\ast\bb{\proj f - \curl\vvel\wedge\vvel
-\proj\np{\curl\wvel\wedge\wvel}},
\label{eq:RepresentationVelocity_CurluWedgeu_ss}
\\
D_x^\alpha\wvel&=D_x^\alpha\fundsolvelpp\ast
\bb{\projcompl f -\curl\vvel\wedge\wvel -\curl\wvel\wedge\vvel+\projcompl\np{\curl\wvel\wedge\wvel}}.
\label{eq:RepresentationVelocity_CurluWedgeu_pp}
\end{align}
Moreover, we have
\begin{equation}\label{eq:RepresentationVorticity_CurluWedgeu}
\curl \uvel(t,x)
=\int_{\grp} \grad\fundsolvort(t-s,x-y) \wedge \bb{f-\curl\uvel\wedge\uvel}(s,y)\,\dsy,
\end{equation}
and
\begin{equation}
\curl\vvel(x)
=\int_{\R^3} \grad\fundsolvortss(x-y) \wedge \bb{\proj f - \curl\vvel\wedge\vvel
-\proj\np{\curl\wvel\wedge\wvel}}(y)\,\dy,
\label{eq:RepresentationVorticity_CurluWedgeu_ss}
\end{equation}
as well as
\begin{equation}
\begin{aligned}
\curl\wvel(t,x)
&=\int_{\grp} \grad\fundsolvortpp(t-s,x-y) \wedge \bb{\projcompl f -\curl\vvel\wedge\wvel \\
&\qquad\qquad\qquad-\curl\wvel\wedge\vvel-\projcompl\np{\curl\wvel\wedge\wvel}}(s,y)\,\dsy.
\end{aligned}
\label{eq:RepresentationVorticity_CurluWedgeu_pp}
\end{equation}
\end{prop}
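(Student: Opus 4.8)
The plan is to recast the nonlinear system \eqref{sys:NavierStokesTP} as the linear time-periodic Oseen problem with a suitable source and then apply the fundamental solutions of Section \ref{sec:TPFundSol}. The key is the elementary identity $\uvel\cdot\grad\uvel=\curl\uvel\wedge\uvel+\tfrac12\grad\snorm{\uvel}^2$, valid because $\Div\uvel=0$. Substituting it into \eqref{sys:NavierStokesTP} shows that $\uvel$, paired with the modified pressure $\upres+\tfrac12\snorm{\uvel}^2$, is a solution of the \emph{linear} time-periodic Oseen problem \eqref{sys:NSlintp_TPFS} on $\torus\times\R^3$ with right-hand side $g\coloneqq f-\curl\uvel\wedge\uvel$. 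First I would verify that $g$ has enough spatial decay for the convolutions below to make sense: by Theorem \ref{thm:VelocityDecay} the fields $\uvel$, $\grad\uvel$, hence $\curl\uvel\wedge\uvel$, are bounded and decay polynomially in $x$ — writing $\uvel=\vvel+\wvel$, the slowest summand $\curl\vvel\wedge\vvel$ is $O\np{\snorm{x}^{-5/2}}$ in the worst direction — while $\supp f$ is compact; together with the interior regularity of $\uvel$ (cf. \cite{GaldiKyed_TPSolNS3D_AsymptoticProfile} and the whole-space case of Lemma \ref{lem:NavierStokesTP_Regularity}) this makes $g$ a tempered distribution and renders the convolutions with $\fundsolvel$, $\fundsolvort$ and their first-order derivatives below absolutely convergent.

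Next I would prove \eqref{eq:RepresentationVelocity_CurluWedgeu}. Since $\fundsolvel$ is the velocity fundamental solution of \eqref{sys:NSlintp_TPFS}, the convolution $\fundsolvel\ast g$ solves the same linear system with the same data, so the difference $\uvel-\fundsolvel\ast g$ is a tempered distributional solution of the \emph{homogeneous} time-periodic Oseen system on $\grp=\torus\times\R^3$. A Liouville-type argument on the Fourier side — the symbol $i\perf k+\snorm{\xi}^2-i\rey\xi_1$ together with the divergence constraint forces the Fourier support of the velocity part to reduce to $\set{(0,0)}$ — shows that $\uvel-\fundsolvel\ast g$ is a spatially constant, time-independent vector. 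Since both $\uvel$ and $\fundsolvel\ast g$ tend to $0$ as $\snorm{x}\to\infty$ (the former by Theorem \ref{thm:VelocityDecay}, the latter by the decay of $\fundsolvel$ applied to $g$), that constant vanishes, i.e. $\uvel=\fundsolvel\ast g$; the version with $D_x^\alpha$, $\snorm{\alpha}\le1$, follows because $D_x^\alpha\fundsolvel\ast g$ converges absolutely and equals $D_x^\alpha\np{\fundsolvel\ast g}$.

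The ``in particular'' formulas then come out by applying $\proj$ and $\projcompl$ to \eqref{eq:RepresentationVelocity_CurluWedgeu} and using the splitting $\fundsolvel=\fundsolvelss\otimes\onedist_\torus+\fundsolvelpp$ from \eqref{eq:tpfundsol_decompVel}: the factor $1-\delta_\Z(k)$ gives $\proj\np{\fundsolvelpp\ast g}=0$, whereas $\np{\fundsolvelss\otimes\onedist_\torus}\ast g=\fundsolvelss\ast\proj g$ is time-independent; hence $\vvel=\fundsolvelss\ast\proj g$ and $\wvel=\fundsolvelpp\ast g$, and it only remains to evaluate $\proj g$ and $\projcompl g$ using the bilinearity of $(a,b)\mapsto\curl a\wedge b$ together with $\proj\vvel=\vvel$ and $\proj\wvel=0$, which produces precisely the right-hand sides of \eqref{eq:RepresentationVelocity_CurluWedgeu_ss}--\eqref{eq:RepresentationVelocity_CurluWedgeu_pp}. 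Finally, the vorticity identities \eqref{eq:RepresentationVorticity_CurluWedgeu}--\eqref{eq:RepresentationVorticity_CurluWedgeu_pp} are obtained by taking $\curl$ of the velocity representations and invoking the algebraic reduction $\varepsilon_{mhj}\partial_h\fundsolvelssjl=\varepsilon_{mh\ell}\partial_h\fundsolvortss$ and its purely periodic analogue — the very computation already used to pass to \eqref{eq:vorticityOseen} — which rewrites $\curl\np{\fundsolvel\ast g}$ as $\grad\fundsolvort\wedge g$; one last use of $\fundsolvort=\fundsolvortss\otimes\onedist_\torus+\fundsolvortpp$ together with $\proj$, $\projcompl$ then separates the steady-state and purely periodic vorticity formulas.

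I expect the main obstacle to be the rigorous justification of $\uvel=\fundsolvel\ast g$ for a merely \emph{weak} solution. This needs, first, the integrability-and-decay bookkeeping of the first paragraph so that every convolution — in particular those with the borderline-integrable kernels $\grad\fundsolvelss$ and $\grad\fundsolvortss$ — converges absolutely and may be differentiated under the integral, and, second, a uniqueness (Liouville) statement for the linear time-periodic Oseen problem in the class of distributions that vanish at infinity. Both ingredients are available from \cite{GaldiKyed_TPSolNS3D_AsymptoticProfile,EiterKyed_etpfslns} combined with Theorem \ref{thm:VelocityDecay}, but matching the various integrability exponents, and transferring the regularity of Lemma \ref{lem:NavierStokesTP_Regularity} (stated for exterior domains) to the case $\Omega=\R^3$, will require some care.
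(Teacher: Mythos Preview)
Your proposal is correct and follows essentially the same route as the paper: use the identity $\uvel\cdot\grad\uvel=\curl\uvel\wedge\uvel+\tfrac12\grad\snorm{\uvel}^2$, invoke the velocity representation via $\fundsolvel$, split with $\proj$ and $\projcompl$, and take $\curl$ using the computation leading to \eqref{eq:vorticityOseen}. The only cosmetic differences are that the paper disposes of $\tfrac12\grad\snorm{\uvel}^2$ via $\fundsolvel\ast\grad\np{\snorm{\uvel}^2}=\Div\np{\fundsolvel\ast\snorm{\uvel}^2}=0$ rather than absorbing it into the pressure, and that the paper outsources the Liouville/uniqueness step you sketch to \cite[Proposition~4.8]{Eiter_SpatiallyAsymptoticStructureTPNS_2020}.
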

\begin{proof}
Since $\uvel\cdot\grad\uvel=\frac{1}{2}\grad\bp{\snorm{\uvel}^2}+\curl\uvel\wedge\uvel$ and
$\fundsolvel\ast\grad\bp{\snorm{\uvel}^2}=\Div\bp{\fundsolvel\ast\snorm{\uvel}^2}=0$,
the equations \eqref{eq:RepresentationVelocity_CurluWedgeu},
\eqref{eq:RepresentationVelocity_CurluWedgeu_ss} and \eqref{eq:RepresentationVelocity_CurluWedgeu_pp} 
are direct consequences of \cite[Proposition 4.8]{Eiter_SpatiallyAsymptoticStructureTPNS_2020}.
The remaining identities follow by 
applying the $\curl$ operator to both sides of these formulas and repeating the computations from Section \ref{sec:TPFundSol}.
\end{proof}

\begin{rem}\label{rem:VortDecay_pp_NotAsInLinear}
In view of Proposition \ref{prop:RepresentationVorticity}
and the pointwise estimates of $\fundsolvort$ from
Theorem \ref{thm:FundsolVortss_est} and Theorem \ref{thm:FundsolVortpp_est},
 we can explain, at this point, the origin of the pointwise estimates 
stated in Theorem \ref{thm:VorticityDecay}.
Comparing \eqref{est:VorticityDecay_ss} and \eqref{est:FundsolVortss_grad},
we see that the asserted decay rates of the steady-state parts $\curl\vvel$ 
and $\grad\fundsolvortss$ coincide, 
which is the optimal result one can expect to derive 
from equation \eqref{eq:RepresentationVorticity_CurluWedgeu_ss}
for general $f\in\CRci(\grp)^3$.
In contrast, the asserted decay rates of the purely periodic parts 
$\curl\wvel$ and $\fundsolvelpp$ given in 
\eqref{est:VorticityDecay_pp} and \eqref{est:FundsolVortpp_grad}, respectively,
do not coincide.
The reason is due to the presence of the term $\curl\vvel\wedge\wvel$ 
in \eqref{eq:RepresentationVorticity_CurluWedgeu_pp}. 
By assuming the---to some extent---optimal decay rate \eqref{est:VorticityDecay_ss} for $\curl\vvel$,
the pointwise estimate of $\wvel$ from \eqref{est:VelocityDecay_fct_pp} implies
\[
\snorm{\curl\vvel\wedge\wvel}(t,x)\leq C \snorm{x}^{-9/2}\e^{- \alpha \wakefct{\rey x}}.
\]
In the end, this term dominates the decay of the right-hand side 
of \eqref{eq:RepresentationVorticity_CurluWedgeu_pp} 
and thus the pointwise estimates of $\curl\wvel$. As a result,
 the decay of $\curl\wvel$ is slower than that of $\grad\fundsolvortpp$ but, however, still faster than the decay rate of the steady-state vorticity field $\curl\vvel$.
\end{rem}

Proposition \ref{prop:RepresentationVorticity} yields fixed-point equations for $\uvel$
and $\curl\uvel$ and the respective steady-state and purely periodic parts,
which we now decompose in an appropriate way.
Let $\cutoff\in\CRci(\R;[0,1])$ 
with $\cutoff(s)=1$ for $\snorm{s}\leq 5/4$ 
and $\cutoff(s)=0$ for $\snorm{s}\geq 7/4$.
For $\radius>0$ define
$\cutoff_\radius\in\CRci(\R^3;[0,1])$ by $\cutoff_{\radius}(x)\coloneqq\cutoff(\radius^{-1}\snorm{x})$,
and fix $\radius_0>0$ such that $\supp f\subset \torus\times\ball_{S_0}$.
For $\radius\in[2\radius_0,\infty)$
we express \eqref{eq:RepresentationVelocity_CurluWedgeu} 
as the sum of two terms, namely
\[
\uvel
=\fundsolvel\ast\bb{-\np{1-\cutoff_\radius}\curl\uvel\wedge\uvel}
+\fundsolvel\ast\bb{f-\cutoff_\radius\curl\uvel\wedge\uvel}.
\]
Due to $\supp\np{1-\cutoff_\radius}\subset\ball^\radius$, 
this yields 
\begin{align}\label{eq:uFixedPoint_VortDecay}
\restriction{\uvel}{\torus\times\ball^\radius}=\calf_\radius(\restriction{\uvel}{\torus\times\ball^\radius})+\calh_\radius,
\end{align}
where
\[
\begin{aligned}
\calf_\radius(\zvel)
&\coloneqq\restriction{\bp{\fundsolvel\ast\bb{-\np{1-\cutoff_\radius}\curl\zvel\wedge\zvel}}}{\torus\times\ball^\radius}, \\
\calh_\radius
&\coloneqq\restriction{\bp{\fundsolvel\ast\bb{f-\cutoff_\radius\curl\uvel\wedge\uvel}}}{\torus\times\ball^\radius}.
\end{aligned}
\]
We set $\nonlin(\zvel)\coloneqq-\curl\zvel\wedge\zvel$ and
\begin{align}
\nonlinss(\zvel)
\coloneqq\proj\nonlin(\zvel)
&=-\curl\zvelss\wedge\zvelss-\proj\np{\curl\zvelpp\wedge\zvelpp}, 
\label{eq:nonlinCurl_ss}\\
\nonlinpp(\zvel)
\coloneqq\projcompl\nonlin(\zvel)
&=-\curl\zvelss\wedge\zvelpp-\curl\zvelpp\wedge\zvelss
-\projcompl\np{\curl\zvelpp\wedge\zvelpp}, 
\label{eq:nonlinCurl_pp}
\end{align}
with $\zvelss\coloneqq\proj\zvel$ and $\zvelpp\coloneqq\projcompl\zvel$.
For $(t,x)\in\torus\times\ball^\radius$ from Proposition \ref{prop:RepresentationVorticity} we then obtain
\begin{align}
D_x^\alpha\proj\calf_\radius(\zvel)(x)
&=D_x^\alpha\fundsolvelss\ast\bb{\np{1-\cutoff_\radius}
\nonlinss(\zvel)}(x), 
\label{eq:Representation_Fss}\\
D_x^\alpha\projcompl\calf_\radius(\zvel)(t,x)
&=D_x^\alpha\fundsolvelpp\ast\bb{\np{1-\cutoff_\radius}\nonlinpp(\zvel)}(t,x), 
\label{eq:Representation_Fpp}\\ 
\curl\proj\calf_\radius(\zvel)(x)
&=\int_{\R^3} \grad\fundsolvortss(x-y) \wedge 
\bb{\np{1-\cutoff_\radius}\nonlinss(\zvel)}(y)\,\dy,
\label{eq:Representation_CurlFss}\\
\curl\projcompl\calf_\radius(\zvel)(t,x)
&=\int_{\torus\times\R^3} \grad\fundsolvortpp(t-s,x-y)
\wedge\bb{\np{1-\cutoff_\radius}\nonlinpp(\zvel)}(s,y)\,\dsy,
\label{eq:Representation_CurlFpp}
\end{align}
and
\begin{align}
D_x^\alpha\proj\calh_\radius(x)
&=D_x^\alpha\fundsolvelss\ast\bb{\proj f+\cutoff_\radius\nonlinss(\uvel)}(x), 
\label{eq:Representation_Hss}\\
D_x^\alpha\projcompl\calh_\radius(t,x)
&=D_x^\alpha\fundsolvelpp\ast\bb{\projcompl f+\cutoff_\radius\nonlinpp(\uvel)}(t,x), 
\label{eq:Representation_Hpp}\\ 
\curl\proj\calh_\radius(x)
&=\int_{\R^3} \grad\fundsolvortss(x-y) \wedge 
\bb{\proj f+\cutoff_\radius\nonlinss(\uvel)}(y)\,\dy,
\label{eq:Representation_CurlHss}\\
\curl\projcompl\calh_\radius(t,x)
&=\int_{\torus\times\R^3} \grad\fundsolvortpp(t-s,x-y) 
\wedge\bb{\projcompl f+\cutoff_\radius\nonlinpp(\uvel)}(s,y)\,\dsy.
\label{eq:Representation_CurlHpp}
\end{align}

In the next step we introduce the functional framework
for the analysis of the fixed-point equation \eqref{eq:uFixedPoint_VortDecay}.
Let $\varepsilon\in\np{0,\frac{1}{4}}$ and fix a radius $\radius>\radius_0$.
We define the following (semi-)norms,
which take into account different decay rates of the steady-state and the purely periodic parts:
\begin{align*}
\velnorm{\zvel}
&\coloneqq
\esssup_{x\in\ball^{\radius}}\Bb{\snorm{x}\np{1+\wakefct{x}}\snorm{\proj\zvel(x)}
+\bb{\snorm{x}\np{1+\wakefct{x}}}^{3/2}\snorm{\grad\proj\zvel(x)}}
\\
&\qquad\qquad\qquad\qquad
+\esssup_{\np{t,x}\in\torus\times\ball^{\radius}}\Bb{\snorm{x}^3\snorm{\projcompl\zvel(t,x)}
+\snorm{x}^4\snorm{\grad\projcompl\zvel(t,x)}}, 
\\
\vortnorm{\zvel}
&\coloneqq\esssup_{x\in\ball^{\radius}}\snorm{x}^{3/2}
\e^{\frac{\wakefct{\Kconst x}}{1+\radius}}\snorm{\curl\proj\zvel(x)} 
\\
&\qquad\qquad\qquad\qquad
+\esssup_{\np{t,x}\in\torus\times\ball^{\radius}}\snorm{x}^{9/2-\varepsilon}
\e^{\frac{\wakefct{\Kconst x}}{1+ \radius}}\snorm{\curl\projcompl\zvel(t,x)},
\end{align*}
where $\Kconst\coloneqq\frac{1}{4}\min\set{\rey, \Cexpfs}$
with $ \Cexpfs$ from Theorem \ref{thm:FundsolVortpp_est}.
The function spaces associated to these (semi-)norms are given by
\begin{align*}
\velspace&\coloneqq\setcl{\zvel\in\WSRloc{1}{1}(\torus\times\ball^\radius)}{\velnorm{\zvel}<\infty},
\\
\vortspace&\coloneqq\setcl{\zvel\in\velspace}{\vortnorm{\zvel}<\infty},
\end{align*}
which are Banach spaces with respect to the norms
\[
\norm{\zvel}_{\velspace}\coloneqq\velnorm{\zvel}, 
\qquad
\norm{\zvel}_{\vortspace}\coloneqq\velnorm{\zvel}+\vortnorm{\zvel},
\]
respectively.

\begin{rem}
Let us explain the terms appearing in these definitions.
The definition of $\velnorm{\zvel}$ is chosen to capture the asymptotic behavior of $\uvel$ and $\grad\uvel$
described in Theorem \ref{thm:VelocityDecay}.
A justification for the denominator $1+\radius$ 
in the exponential term in the definition of $\vortnorm{\zvel}$ 
is given by 
Lemma \ref{lem:Hs_EstExpFct} below. 
The choice of the constant $\Kconst$ ensures the validity of the inequalities
\begin{align}\label{est:Kconst}
\e^{2\wakefct{\Kconst x}}
\leq \e^{\wakefct{\rey x}/2}, \qquad\qquad
\e^{2\wakefct{\Kconst x}}
\leq \e^{ \Cexpfs\snorm{x}},
\end{align}
so that the exponential term can be related with the exponential terms 
in the decay rates of $\grad\fundsolvortss$ and $\grad\fundsolvortpp$
from Theorem \ref{thm:FundsolVortss_est} and Theorem \ref{thm:FundsolVortpp_est}, respectively.
Moreover, the second term in the definition of $\vortnorm{\zvel}$
contains the factor $\snorm{x}^{9/2-\varepsilon}$ instead of $\snorm{x}^{9/2}$,
which one would expect, in view of the asserted estimate \eqref{est:VorticityDecay_pp}.
Later on we shall see that this discrepancy is necessary to ensure that $\calf_\radius$ 
is a contraction in the underlying function space. 
\end{rem}

\section{Estimates}
\label{sec:Estimates}

In this section, we collect estimates of $\calh_\radius$ and $\calf_\radius(z)$
with respect to the (semi-)norms introduced above,
which ensure that $\zvel\mapsto\calf_\radius(\zvel)+\calh_\radius$
is a contractive self-mapping when we choose $\radius$ sufficiently large.
We begin with the following elementary lemma,
which explains the term $1+\radius$
in the definition of $\vortnorm{\zvel}$.

\begin{lem}\label{lem:Hs_EstExpFct}
Let $a,\,\radius>0$.
If $x,y\in\R^3$ with $\snorm{y}\leq2\radius$, then 
\begin{align}
\e^{-\wakefct{a\np{x-y}}}
&\leq\e^{4a}\e^{-\frac{\wakefct{a x}}{1+\radius}},
\label{est:Hs_vortest_ExpWakefct}
\\
\e^{-a\snorm{x-y}}
&\leq\e^{2a}\e^{-\frac{a\snorm{x}}{1+\radius}}.
\label{est:Hs_vortest_ExpAbs}
\end{align}
\end{lem}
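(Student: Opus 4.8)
The plan is to reduce everything to two elementary inequalities: the triangle inequality for $\snorm{\cdot}$ and the observation that $\wakefct{z}=\snorm{z}+z_1$ is nonnegative and satisfies a triangle-type bound. First I would prove \eqref{est:Hs_vortest_ExpAbs}, which is the simpler one. Since $\snorm{y}\le 2\radius$, the ordinary triangle inequality gives $\snorm{x}\le\snorm{x-y}+\snorm{y}\le\snorm{x-y}+2\radius$, hence $\snorm{x-y}\ge\snorm{x}-2\radius$. Exponentiating (with the sign flip) yields $\e^{-a\snorm{x-y}}\le\e^{2a\radius}\e^{-a\snorm{x}}$. To convert the bare $\e^{-a\snorm{x}}$ into $\e^{-a\snorm{x}/(1+\radius)}$ and the prefactor $\e^{2a\radius}$ into $\e^{2a}$, I would split into the two cases $\snorm{x}\le 2(1+\radius)$ and $\snorm{x}> 2(1+\radius)$. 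Actually a cleaner route: write $a\snorm{x-y}\ge \frac{a\snorm{x}}{1+\radius}$ whenever $\snorm{x-y}(1+\radius)\ge\snorm{x}$, i.e. whenever $\snorm{x-y}\ge \snorm{x}/(1+\radius)$; and $\snorm{x}-\snorm{x-y}\le\snorm{y}\le 2\radius$ forces $\snorm{x-y}\ge\snorm{x}-2\radius\ge\snorm{x}\bigl(1-\tfrac{2\radius}{\snorm{x}}\bigr)$, so the desired inequality $\snorm{x-y}\ge\snorm{x}/(1+\radius)$ holds once $\snorm{x}\ge 2\radius\cdot\frac{1+\radius}{\radius}=2(1+\radius)$; in that regime we even get $\e^{-a\snorm{x-y}}\le\e^{-a\snorm{x}/(1+\radius)}$ with no prefactor. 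In the complementary regime $\snorm{x}<2(1+\radius)$ we simply bound $\e^{-a\snorm{x-y}}\le 1$ and need $\e^{-a\snorm{x}/(1+\radius)}\ge\e^{-2a}$, which is exactly $\snorm{x}/(1+\radius)<2$; so the factor $\e^{2a}$ covers this case. Combining, \eqref{est:Hs_vortest_ExpAbs} follows with constant $\e^{2a}$.

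For \eqref{est:Hs_vortest_ExpWakefct} I would run the identical two-case argument with $\wakefct{\cdot}$ in place of $\snorm{\cdot}$. The key sub-estimate needed is a triangle-type inequality $\wakefct{x}\le\wakefct{x-y}+2\snorm{y}$: indeed $\wakefct{x}=\snorm{x}+x_1\le(\snorm{x-y}+\snorm{y})+(x_1-y_1)+y_1=\wakefct{x-y}+\snorm{y}+y_1\le\wakefct{x-y}+2\snorm{y}$, using $\snorm{y_1}\le\snorm{y}$. With $\snorm{y}\le 2\radius$ this gives $\wakefct{a(x-y)}=a\,\wakefct{x-y}\ge a\,\wakefct{x}-4a\radius$, and since $\wakefct{\cdot}\ge 0$ the same case split (threshold $\wakefct{x}\ge 4(1+\radius)$ versus $\wakefct{x}<4(1+\radius)$, using $\wakefct{x-y}\ge 0$ and $\wakefct{x-y}\ge\wakefct{x}-4\radius$) produces $\e^{-\wakefct{a(x-y)}}\le\e^{4a}\e^{-\wakefct{ax}/(1+\radius)}$. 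One should note $\wakefct{ax}=a\wakefct{x}$ since $a>0$, so the statement as written is consistent.

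I do not expect a genuine obstacle here; the only thing to get right is the bookkeeping of constants and the choice of threshold in the case distinction, together with recording the elementary fact $\wakefct{x}\le\wakefct{x-y}+2\snorm{y}$ (and its analogue with $\snorm{\cdot}$). If one prefers to avoid the case split entirely, an alternative is: from $\wakefct{x}\le\wakefct{x-y}+4\radius$ write $\wakefct{x-y}\ge\wakefct{x}-4\radius$; then since $\frac{\radius}{1+\radius}<1$ one has $\wakefct{x}-4\radius\ge\wakefct{x}-\frac{\radius}{1+\radius}\wakefct{x}-4\radius=\frac{\wakefct{x}}{1+\radius}-4\radius$ is \emph{not} quite enough, so the two-case argument is cleanest and is the route I would write up. The bound $\snorm{y}\le 2\radius$ is used only through the numerical value $4\radius$ (resp. $2\radius$), and any larger multiple of $\radius$ would merely enlarge the harmless constant $\e^{4a}$ (resp. $\e^{2a}$).
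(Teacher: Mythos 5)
Your proof is correct, but the paper takes a slicker route that avoids the case split you found necessary. The key observation you were missing when you tried (and abandoned) the direct approach is the order of operations: rather than applying the reverse triangle inequality \emph{first} and then trying to compare $\wakefct{x}-4\radius$ with $\wakefct{x}/(1+\radius)$, the paper \emph{first} uses the nonnegativity of $\wakefct{a(x-y)}$ and the fact that $1+\radius\ge 1$ to write $\e^{-\wakefct{a(x-y)}}\le\e^{-\wakefct{a(x-y)}/(1+\radius)}$, and only \emph{then} applies the reverse triangle inequality $\wakefct{a(x-y)}\ge\wakefct{ax}-\wakefct{ay}$ inside the shrunk exponent, yielding
\[
\e^{-\wakefct{a(x-y)}}
\le\e^{-\frac{\wakefct{a(x-y)}}{1+\radius}}
\le\e^{-\frac{\wakefct{ax}}{1+\radius}}\e^{\frac{\wakefct{ay}}{1+\radius}}
\le\e^{-\frac{\wakefct{ax}}{1+\radius}}\e^{4a},
\]
the last step using $\wakefct{ay}\le 2a\snorm{y}\le 4a\radius\le 4a(1+\radius)$. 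The same three-line chain with $\snorm{\cdot}$ in place of $\wakefct{\cdot}$ gives \eqref{est:Hs_vortest_ExpAbs}. Both arguments are elementary and yield the same constants; the paper's version is shorter because the factor $1/(1+\radius)$ is introduced by a one-line shrinking of the exponent rather than by a threshold argument, and it makes transparent why the constant $\e^{4a}$ depends only on $a$ and not on $\radius$. Your version is perfectly valid as written, and your observation that the constant $\e^{4a}$ is "harmless" and insensitive to the precise multiple of $\radius$ is also correct.
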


\begin{proof}
For $\snorm{y}\leq2\radius$ we have
$\wakefct{a y}/\np{1+\radius}\leq 2a\snorm{y}/\np{1+\radius}\leq4a$.
Together with $\wakefct{a\np{x-y}}\geq\wakefct{ax}-\wakefct{ay}$, this implies
\[
\e^{-\wakefct{a\np{x-y}}}\leq\e^{-\frac{\wakefct{a\np{x-y}}}{1+\radius}}
\leq\e^{-\frac{\wakefct{a x}}{1+\radius}}\e^{\frac{\wakefct{a y}}{1+\radius}}
\leq\e^{-\frac{\wakefct{a x}}{1+\radius}}\e^{4a}.
\]
Similarly, we have
$\snorm{y}/(1+\radius)\leq 2$, 
which implies
\[
\e^{-a\snorm{x-y}}\leq\e^{-\frac{a\snorm{x-y}}{1+\radius}}
\leq\e^{-\frac{a\snorm{x}}{1+\radius}}\e^{\frac{a\snorm{y}}{1+\radius}}
\leq\e^{-\frac{a\snorm{x}}{1+\radius}}\e^{2a}.
\]
This completes the proof.
\end{proof}

We further employ the following lemma in order to estimate convolutions 
of functions with anisotropic decay behavior.

\begin{lem}\label{lem:ConvEst_WakeFct_ThreeHalfs}
Let $A\in(2,\infty)$, $B\in[0,\infty)$ with $A+\min\set{1,B}>3$.
Then there exists $C=C\np{A,B}>0$ such that 
for all $x\in\R^3$ it holds
\[
\int_{\R^3}\bb{\np{1+\snorm{x-y}}\np{1+\wakefct{x-y}}}^{-3/2}
\np{1+\snorm{y}}^{-A}\np{1+\wakefct{y}}^{-B}\,\dy 
\leq C\np{1+\snorm{x}}^{-3/2}.
\]
\end{lem}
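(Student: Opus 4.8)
The statement is a standard convolution estimate for functions with Oseen-type anisotropic decay, so the plan is to follow the classical splitting of the domain of integration according to where the integrand localizes. Write $I(x)$ for the integral and denote by $g_1(z) := [(1+\snorm{z})(1+\wakefct{z})]^{-3/2}$ the "Oseen kernel" factor and by $g_2(z) := (1+\snorm{z})^{-A}(1+\wakefct{z})^{-B}$ the "source" factor. First I would reduce to $\snorm{x}$ large, since for $\snorm{x}$ in a bounded set the bound is trivial: $g_1$ is locally integrable (the singularity $\snorm{z}^{-3/2}$ near $z=0$ is integrable in $\R^3$, and the factor $\wakefct{z}^{-3/2}$ is integrable on bounded sets because $\wakefct{z}\ge 0$ vanishes only on the negative $x_1$-axis, a set of measure zero, with $\int_{\snorm{z}\le R}\wakefct{z}^{-3/2}\dz<\infty$ for $3/2<2$), and $g_2\in\LR{\infty}$, so $I(x)$ stays bounded there.

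For $\snorm{x}$ large, split $\R^3 = \Omega_1\cup\Omega_2\cup\Omega_3$ with $\Omega_1:=\{\snorm{y}\le\snorm{x}/2\}$, $\Omega_2:=\{\snorm{x-y}\le\snorm{x}/2\}$, and $\Omega_3$ the complement of both. On $\Omega_1$ one has $\snorm{x-y}\ge\snorm{x}/2$ and, more importantly, the anisotropic gain: $\wakefct{x-y}$ is comparable to $\wakefct{x}$ up to lower-order terms on a large portion of $\Omega_1$, so $g_1(x-y)\le C(1+\snorm{x})^{-3/2}(1+\wakefct{x})^{-3/2+\text{something}}$; the remaining integral $\int_{\Omega_1} g_2(y)\,\dy$ is finite precisely because $A+\min\{1,B\}>3$ — this is the condition guaranteeing $g_2\in\LR{1}(\R^3)$ after accounting for the anisotropy (the $\wakefct{}^{-B}$ factor helps only up to exponent $1$, hence $\min\{1,B\}$). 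This yields the bound $C(1+\snorm{x})^{-3/2}$ on $\Omega_1$, and in fact with a bit of room to spare. On $\Omega_2$, symmetrically, $\snorm{y}\ge\snorm{x}/2$ so $g_2(y)\le C(1+\snorm{x})^{-A}(1+\wakefct{y})^{-B}$, and one integrates $g_1(x-y)$ over $\snorm{x-y}\le\snorm{x}/2$; here the worst case (on or near the $x_1$-axis, where $\wakefct{x-y}$ can be small) gives $\int g_1 \le C(1+\snorm{x})^{?}$, and since $A$ is large relative to $3/2$ the product is controlled by $(1+\snorm{x})^{-3/2}$. On $\Omega_3$ both $\snorm{y}\gtrsim\snorm{x}$ and $\snorm{x-y}\gtrsim\snorm{x}$, so both kernels decay like a power of $\snorm{x}$, and the integral converges and is dominated by $(1+\snorm{x})^{-3/2}$ using $A>2$.

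**Main obstacle.** The delicate point is the anisotropy handling on $\Omega_1$: one must show that the region of $y$ for which $\wakefct{x-y}$ fails to be comparable to $\wakefct{x}$ — roughly, $y$ lying in a thin paraboloid around the positive $x_1$-axis shifted by $x$ — contributes only a lower-order amount, and this is exactly where the exponent balance $A+B\ge$ (something) together with $A+\min\{1,B\}>3$ must be used carefully; in particular one cannot afford to lose more than a power $3/2$ total. The cleanest route is to quote the general anisotropically-weighted convolution lemmas of Farwig \cite{Farwig_habil} or Kra\v cmar--Novotn\'y--Pokorn\'y \cite{KracmarNovotnyPokorny2001} — indeed this lemma is essentially a special/limiting case of the estimates underlying Theorem \ref{thm:ConvFundsolss}, with the kernel $[(1+\snorm{x})(1+\wakefct{x})]^{-3/2}$ playing the role of (a pointwise majorant of) $\snorm{\grad\fundsolvelss}$ — and then only the elementary convergence of $\int g_2$ under the stated hypothesis needs to be checked by hand. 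I would therefore present the proof as: (i) reduce to large $\snorm{x}$; (ii) invoke the known anisotropic convolution estimate with the two exponents chosen so that the output exponent is $3/2$; (iii) verify the hypothesis $A+\min\{1,B\}>3$ is exactly the integrability condition required, and note that no upper restriction on $A,B$ is needed because larger decay of $g_2$ only helps.
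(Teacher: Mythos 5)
The paper's proof is a one-line citation to Deuring and Galdi \cite[Theorem~5]{DeuringGaldi_ExpDecayVorticity_2016}, which states exactly this estimate; so strictly speaking your route (decompose by hand, or cite Farwig/Kra\v cmar--Novotn\'y--Pokorn\'y) is genuinely different. Your direct decomposition does work, but the ``main obstacle'' you flag on $\Omega_1$ is a mirage: since the asserted output bound $C(1+\snorm{x})^{-3/2}$ carries \emph{no} wake factor, on $\Omega_1$ you may simply drop $(1+\wakefct{x-y})^{-3/2}\le 1$ and use $\snorm{x-y}\ge\snorm{x}/2$ to get $g_1(x-y)\le C(1+\snorm{x})^{-3/2}$; the rest is $\int_{\R^3}g_2\,\dy<\infty$, which is precisely the condition $A+\min\set{1,B}>3$. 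There is no need to compare $\wakefct{x-y}$ with $\wakefct{x}$ anywhere. The actually load-bearing computation is on $\Omega_2$: one must check $\int_{\snorm{z}\le R}g_1(z)\,\dz\lesssim R^{1/2}$ (the surface integral of $(1+\wakefct{z})^{-3/2}$ over $\snorm{z}=\rho$ grows like $\rho$, and $\int^R\rho(1+\rho)^{-3/2}\,\dr\sim R^{1/2}$), and then $A>2$ is used to close $-A+\tfrac{1}{2}\le-\tfrac32$. You gesture at this with ``$(1+\snorm{x})^{?}$ and $A$ large relative to $3/2$'' but never pin down the exponent.

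Two further points. First, you invoke a constraint ``$A+B\ge$ (something)'' that does not appear in the hypotheses of this lemma; you are conflating it with the stronger assumption $A+B\ge 7/2$ in Theorem~\ref{thm:ConvFundsolss}(2), which is \emph{not} assumed here (take e.g.\ $A=5/2$, $B=3/5$). This is exactly why the conclusion is the weaker isotropic bound $(1+\snorm{x})^{-3/2}$ rather than the wake-weighted $\bb{(1+\snorm{x})(1+\wakefct{x})}^{-3/2}$. Consequently, your alternative plan of quoting Theorem~\ref{thm:ConvFundsolss}(2) directly does not apply under the stated hypotheses; one would need a more general variant from \cite{KracmarNovotnyPokorny2001}, or simply the Deuring--Galdi reference the paper uses. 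If you present the direct decomposition, be sure to separate cleanly the role of $A+\min\set{1,B}>3$ (global integrability of $g_2$, used on $\Omega_1$ and $\Omega_3$) from the role of $A>2$ (controlling the $R^{1/2}$ growth on $\Omega_2$).
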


\begin{proof}
See \cite[Theorem 5]{DeuringGaldi_ExpDecayVorticity_2016}.
\end{proof}

The next lemma
treats convolutions of functions that are homogeneous in space.

\begin{lem}\label{lem:ConvolutionExp}
Let $A\in(0,3)$, $B\in(0,\infty)$, $\alpha\in(0,\infty)$. 
Then there exists a constant $C=C\np{A,B,\alpha}>0$
such that for all $x\in\R^3$ it holds
\[
\int_{\R^3}\snorm{x-y}^{-A}
\e^{-\alpha\snorm{x-y}}\np{1+\snorm{y}}^{-B}\,\dy
\leq C\np{1+\snorm{x}}^{-B}.
\]
\end{lem}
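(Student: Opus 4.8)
The plan is to split the domain of integration into the two natural regions where one of the factors $\snorm{x-y}$ or $\snorm{y}$ is comparable to $\snorm{x}$, and to use the exponential factor to make the remaining integrals converge despite the low power $A<3$. Concretely, fix $x\in\R^3$ with $\snorm{x}\geq 1$ (the case $\snorm{x}\leq 1$ is trivial, since the integral is then bounded by a fixed constant by local integrability of $\snorm{z}^{-A}\e^{-\alpha\snorm z}$ for $A<3$ and the exponential decay at infinity). Write the integral as $\int_{\{\snorm{y}\leq\snorm{x}/2\}}+\int_{\{\snorm{y}>\snorm{x}/2\}}=:I_1+I_2$.

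In the region $\snorm{y}\leq\snorm{x}/2$ we have $\snorm{x-y}\geq\snorm{x}/2$, so $\e^{-\alpha\snorm{x-y}}\leq\e^{-\alpha\snorm{x}/4}\,\e^{-\alpha\snorm{x-y}/2}$, and moreover $\snorm{x-y}^{-A}\leq C\snorm{x}^{-A}$ is bounded; hence
\[
I_1\leq C\snorm{x}^{-A}\,\e^{-\alpha\snorm{x}/4}\int_{\R^3}\e^{-\alpha\snorm{x-y}/2}\,\dy
\leq C\snorm{x}^{-A}\e^{-\alpha\snorm{x}/4},
\]
which is $\leq C(1+\snorm{x})^{-B}$ because the exponential beats every polynomial. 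In the region $\snorm{y}>\snorm{x}/2$ we have $(1+\snorm{y})^{-B}\leq C(1+\snorm{x})^{-B}$, so
\[
I_2\leq C(1+\snorm{x})^{-B}\int_{\R^3}\snorm{x-y}^{-A}\e^{-\alpha\snorm{x-y}}\,\dy
= C(1+\snorm{x})^{-B}\int_{\R^3}\snorm{z}^{-A}\e^{-\alpha\snorm{z}}\,\dz,
\]
and the last integral is finite since $A<3$ (integrability near the origin in $\R^3$) and the exponential ensures convergence at infinity. Combining the two bounds gives the claim.

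I do not expect a genuine obstacle here: the only point requiring minor care is that $A<3$ is exactly what makes $\snorm{z}^{-A}$ locally integrable in $\R^3$, so the last displayed integral converges; and $B>0$, $\alpha>0$ are used only to absorb lower-order terms and the factor $\e^{-\alpha\snorm{x}/4}$ into $(1+\snorm{x})^{-B}$. The constant $C$ produced this way depends only on $A$, $B$, $\alpha$ as asserted. (If one prefers, $I_2$ can alternatively be handled by Young's inequality, writing the integral as $(\snorm{\cdot}^{-A}\e^{-\alpha\snorm{\cdot}})\ast(1+\snorm{\cdot}^{-B})$ and noting the first factor lies in $\LR{1}(\R^3)$; but the split above keeps everything elementary and self-contained.)
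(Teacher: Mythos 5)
Your proof is correct and follows essentially the same strategy as the paper: split $\R^3$ into a region where $\snorm{y}\gtrsim\snorm{x}$ (there bound $(1+\snorm{y})^{-B}\lesssim(1+\snorm{x})^{-B}$ and use $A<3$ for convergence) and a region where $\snorm{x-y}\gtrsim\snorm{x}$ (there let the exponential absorb everything). The paper partitions by the size of $\snorm{x-y}$ rather than $\snorm{y}$, which is a cosmetic difference, and you handle the small-$\snorm{x}$ regime a bit more explicitly than the paper, which only reduces to $x\neq 0$.
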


\begin{proof}
For $x=0$ the integral is finite,
so that it remains to consider $x\neq0$.
We split the integral into two parts
\begin{align*}
I_1
&\coloneqq\int_{\ball_{\snorm{x}/2}(x)}\snorm{x-y}^{-A}
\e^{-\alpha\snorm{x-y}}\np{1+\snorm{y}}^{-B}\,\dy,\\
I_2
&\coloneqq\int_{\ball^{\snorm{x}/2}(x)}\snorm{x-y}^{-A}
\e^{-\alpha\snorm{x-y}}\np{1+\snorm{y}}^{-B}\,\dy,
\end{align*}
which we estimate separately.
On the one hand, 
since $\snorm{x-y}\leq\snorm{x}/2$ implies $\snorm{y}\geq\snorm{x}-\snorm{x-y}\geq\snorm{x}/2$,
we have
\[
I_1
\leq C\np{1+\snorm{x}}^{-B}\int_{\R^3}\snorm{x-y}^{-A}\e^{-\alpha\snorm{x-y}}\,\dy
\leq C\np{1+\snorm{x}}^{-B},
\]
where the integral is finite due to $A<3$.
On the other hand, we obtain 
\[
I_2
\leq C\e^{-\alpha\snorm{x}/4}
\int_{\R^3}\e^{-\alpha\snorm{x-y}/2}\,\dy
\leq C\e^{-\alpha\snorm{x}/4}
\leq C\np{1+\snorm{x}}^{-B}.
\]
This completes the proof.
\end{proof}

Since our assumptions do not provide pointwise information
on the body force $f$,
we estimate the convolutions of the fundamental 
solutions with $f$ in a different way,
which leads to the following lemma.

\begin{lem}\label{lem:Conv_fLp}
There exists a constant $C>0$ such that 
for $\alpha\in\N_0$, $\snorm{\alpha}\leq1$,
we have
\begin{align}
\snorml{D_x^\alpha\fundsolvelss\ast{\proj f}(x)}
&\leq C\bb{\snorm{x}\bp{1+\wakefct{\rey x}}}^{-1-\frac{\snorm{\alpha}}{2}}, 
\label{est:Conv_fLp_velss}
\\
\snorml{D_x^\alpha\fundsolvelpp\ast{\projcompl f}(t,x)}
&\leq C\snorm{x}^{-3-\snorm{\alpha}}, 
\label{est:Conv_fLp_velpp}
\\
\snorml{\int_{\R^3} \grad\fundsolvortss(x-y) \wedge {\proj f}(y)\,\dy}
&\leq C\snorm{x}^{-3/2}\e^{-\frac{\wakefct{\rey x}}{4\np{1+\radius_0}}},
\label{est:Conv_fLp_vortss}
\\
\snorml{\int_{\torus\times\R^3} \grad\fundsolvortpp(t-s,x-y) 
\wedge{\projcompl f}(s,y)\,\dsy}
&\leq C\snorm{x}^{-9/2}\e^{-\frac{\Cexpfs\snorm{x}}{2\np{1+\radius_0}}}.
\label{est:Conv_fLp_vortpp}
\end{align}
for all $t\in\torus$ and $\snorm{x}\geq 2\radius_0$.
\end{lem}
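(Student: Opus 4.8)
The plan is to prove the four estimates \eqref{est:Conv_fLp_velss}--\eqref{est:Conv_fLp_vortpp} by exploiting the fact that $f$ has spatially bounded support, so that the convolutions reduce to integrals over the fixed ball $\ball_{\radius_0}$ (in space). Since we only control $f$ in $\LR{q}$-norms and not pointwise, the idea is to move all the regularity onto the fundamental solution and apply Hölder's inequality on the support of $f$.

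First I would treat \eqref{est:Conv_fLp_velss}. For $\snorm{x}\geq2\radius_0$ and $\snorm{y}\leq\radius_0$ we have $\snorm{x-y}\geq\snorm{x}/2$, and one checks the elementary inequality $\wakefct{\rey(x-y)}\geq c\,\wakefct{\rey x}-C$ for $y$ in the bounded support, so that \eqref{est:fundsolss_Decay} gives $\snorm{D_x^\alpha\fundsolvelss(x-y)}\leq C[\snorm{x}(1+\wakefct{\rey x})]^{-1-\snorm{\alpha}/2}$ uniformly in $y\in\ball_{\radius_0}$ (shrinking the constant in the wake factor is harmless since we want an upper bound). Then
\[
\snorml{D_x^\alpha\fundsolvelss\ast\proj f(x)}
\leq C\bb{\snorm{x}(1+\wakefct{\rey x})}^{-1-\frac{\snorm{\alpha}}{2}}
\int_{\ball_{\radius_0}}\snorm{\proj f(y)}\,\dy,
\]
and the last integral is finite by Hölder since $f\in\LR{q}$ for some $q>1$ and $\ball_{\radius_0}$ is bounded; note $\proj f\in\LR{q}$ as $\proj$ is bounded on $\LR{q}$. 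The estimate \eqref{est:Conv_fLp_velpp} is proved the same way using \eqref{est:tpfundsol_ComplPointwiseEst} with $r=q$, replacing the pointwise bound on $\fundsolvelpp$ by $\norm{D_x^\alpha\fundsolvelpp(\cdot,x-y)}_{\LR{q}(\torus)}\leq C\snorm{x}^{-3-\snorm{\alpha}}$ for $\snorm{y}\leq\radius_0$, and then estimating $\snorm{D_x^\alpha\fundsolvelpp\ast_\grp\projcompl f(t,x)}$ via Hölder in both the time and the bounded spatial variable against $\norm{\projcompl f}_{\LR{q'}(\torus\times\ball_{\radius_0})}<\infty$.

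For the vorticity estimates \eqref{est:Conv_fLp_vortss} and \eqref{est:Conv_fLp_vortpp} the argument is identical in structure but uses the gradient bounds on the vorticity fundamental solutions. For \eqref{est:Conv_fLp_vortss}: for $\snorm{x}\geq2\radius_0$ and $\snorm{y}\leq\radius_0$, Theorem \ref{thm:FundsolVortss_est} together with $\snorm{x-y}\geq\snorm{x}/2$ and $\wakefct{\rey(x-y)}\geq\frac{\wakefct{\rey x}}{1+\radius_0}-C$ (the denominator $1+\radius_0$ coming from Lemma \ref{lem:Hs_EstExpFct} with $\radius=\radius_0$, $a=\rey$, applied to $\snorm{y}\le 2\radius_0$) shows that $\snorm{\grad\fundsolvortss(x-y)}\leq C\snorm{x}^{-3/2}\e^{-\wakefct{\rey x}/(4(1+\radius_0))}$ uniformly for $y\in\ball_{\radius_0}$, where we have absorbed the polynomially growing prefactor $\wakefct{\rey x}^{1/2}$ and the extra constant into a weaker exponential rate. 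Pulling this bound out and using $\int_{\ball_{\radius_0}}\snorm{\proj f}\,\dy<\infty$ gives \eqref{est:Conv_fLp_vortss}. The estimate \eqref{est:Conv_fLp_vortpp} follows in the same way from Theorem \ref{thm:FundsolVortpp_est}: choosing $\gamma\in(0,1)$ large enough that $1+2\gamma>9/2$ would be impossible, so instead I would combine the $\LR{q}(\torus)$ bound $\norm{\grad\fundsolvortpp(\cdot,x-y)}_{\LR{q}(\torus)}\leq C\snorm{x-y}^{-(2+2\gamma)}\e^{-\Cexpfs\snorm{x-y}}$ with the purely exponential decay to trade polynomial order for exponential rate: for $\snorm{x}\ge 2\radius_0$ we bound $\snorm{x-y}^{-(2+2\gamma)}\e^{-\Cexpfs\snorm{x-y}/2}\le C\snorm{x}^{-9/2}$ (valid for all $\gamma\in(0,1)$ once $\snorm{x}$ is large, since exponential beats any polynomial) and keep the remaining factor $\e^{-\Cexpfs\snorm{x-y}/2}\le C\e^{-\Cexpfs\snorm{x}/(2(1+\radius_0))}$ via \eqref{est:Hs_vortest_ExpAbs}. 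Then Hölder in $t$ and in the bounded variable $y$ against $\norm{\projcompl f}_{\LR{q'}(\torus\times\ball_{\radius_0})}$ finishes the proof.

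The only mildly delicate point is the bookkeeping of the exponential rates: one has to make sure that after replacing $\wakefct{\rey(x-y)}$ (resp. $\snorm{x-y}$) by a fraction of $\wakefct{\rey x}$ (resp. $\snorm{x}$) and absorbing polynomial prefactors, the surviving exponential rate is still positive and matches the constants $4(1+\radius_0)$, $2(1+\radius_0)$ claimed in the statement. This is exactly the role of Lemma \ref{lem:Hs_EstExpFct} applied with $\radius=\radius_0$, and the polynomial absorption is harmless because the support of $f$ keeps us in the region $\snorm{x}\geq 2\radius_0$ where $\snorm{x}$ is bounded below; there is no genuine analytic obstacle here, just the need to state the elementary wake-function inequality $\wakefct{a(x-y)}\ge \frac{1}{1+\radius_0}\wakefct{ax}-4a$ for $\snorm{y}\le 2\radius_0$ and quote it cleanly.
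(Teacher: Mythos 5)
Your proposal is correct and follows essentially the same strategy as the paper's proof: use the bounded support of $f$ to reduce to an integral over $\ball_{\radius_0}$, bound the shifted fundamental solution uniformly via $\snorm{x-y}\geq\snorm{x}/2$ and the wake-function inequality of Lemma \ref{lem:Hs_EstExpFct}, absorb polynomial prefactors into a weakened exponential rate, and close with Hölder's inequality against the $\LR{q}$-norm of $f$. The only cosmetic difference is that you state the wake inequality in the looser additive form $\wakefct{a(x-y)}\geq\tfrac{1}{1+\radius_0}\wakefct{ax}-4a$ rather than the multiplicative form $(1+2\rey\radius_0)(1+\wakefct{\rey(x-y)})\geq 1+\wakefct{\rey x}$ used by the paper for \eqref{est:Conv_fLp_velss}; both are fine.
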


\begin{proof}
For $\snorm{x}\geq 2\radius_0\geq 2\snorm{y}$ 
we have
\[
(1+2\rey\radius_0)\np{1+\rey\wakefct{x-y}}
\geq 1+\rey\wakefct{x}+2\rey\radius_0-\rey\wakefct{y}
\geq 1+\rey\wakefct{x}
\]
and $\snorm{x-y}\geq\snorm{x}-\snorm{y}\geq\snorm{x}/2\geq \radius_0$.
Therefore, \eqref{est:fundsolss_Decay} and 
$\supp f \subset \torus\times\ball_{\radius_0}$
imply
\[
\begin{aligned}
\snorml{D_x^\alpha\fundsolvelss\ast{\proj f}(x)}
&\leq C\int_{\ball_{\radius_0}}
\bb{\snorm{x-y}\bp{1+\rey\wakefct{x-y}}}^{-1-\frac{\snorm{\alpha}}{2}}
\snorm{\proj f(y)}\,\dy
\\
&\leq C 
\bb{\snorm{x}\bp{1+\rey\wakefct{x}}}^{-1-\frac{\snorm{\alpha}}{2}}
\int_{\ball_{\radius_0}} \snorm{\proj f(y)}\,\dy,
\end{aligned}
\]
which yields \eqref{est:Conv_fLp_velss}.
Using H\"older's inequality in time
and \eqref{est:tpfundsol_ComplPointwiseEst}, for any $q\in(1,\infty)$
we obtain in a similar way
\[
\begin{aligned}
\snorml{D_x^\alpha\fundsolvelpp\ast{\projcompl f}(t,x)}
&\leq \int_{\ball_{\radius_0}}
\Bp{\int_\torus \snorm{D_x^\alpha\fundsolvelpp(s,x-y)}^{\frac{q}{q-1}}\,\ds}^{\frac{q-1}{q}}
\Bp{\int_\torus\snorm{\projcompl f(s,y)}^q\,\ds}^{1/q}\,\dy
\\
&\leq C\int_{\ball_{\radius_0}}
\snorm{x-y}^{-3-\snorm{\alpha}}
\Bp{\int_\torus\snorm{\projcompl f(s,y)}^q\,\ds}^{1/q}\,\dy
\\
&\leq C 
\snorm{x}^{-3-\snorm{\alpha}}
\Bp{\int_\torus\int_{\ball_{\radius_0}}\snorm{\projcompl f(s,y)}^q\,\ds\dy}^{1/q},
\end{aligned}
\]
which shows \eqref{est:Conv_fLp_velpp}.
In virtue of the estimates \eqref{est:FundsolVortss_grad}
and \eqref{est:FundsolVortpp_grad} and
Lemma \ref{lem:Hs_EstExpFct},
for $\snorm{x}\geq2\radius_0\geq2\snorm{y}$ we further derive
\[
\begin{aligned}
\snorm{\grad\fundsolvortss(x-y)}
&\leq C\snorm{x-y}^{-3/2}
\e^{-\wakefct{\rey\np{x-y}}/4} 
\leq C \snorm{x}^{-3/2}\e^{-\frac{\wakefct{\rey x}}{4\np{1+\radius_0}}}, 
\\
\norm{\grad\fundsolvortpp(\cdot,x-y)}_{\LR{q}(\torus)}
&\leq C\snorm{x-y}^{-9/2}\e^{- \Cexpfs\snorm{x-y}/2}
\leq C \snorm{x}^{-9/2}\e^{-\frac{-\Cexpfs\snorm{x}}{2\np{1+\radius_0}}}.
\end{aligned}
\]
From these estimates 
we conclude \eqref{est:Conv_fLp_vortss} and \eqref{est:Conv_fLp_vortpp}
with the same argument as above.
\end{proof}

After these preparations,
we show in the next two lemmas that the norm of $\calh_\radius$ 
in both $\velspace$ and $\vortspace$ is bounded by a constant 
independent of $\radius\geq 2\radius_0$.

\begin{lem}\label{lem:Hs_velest}
There exists a constant $ \CHsvelnorm>0$ such that for all $\radius\in[2\radius_0,\infty)$ 
we have
\[
\velnorm{\calh_\radius}\leq \CHsvelnorm.
\]
\end{lem}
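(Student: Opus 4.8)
The plan is to estimate separately the two contributions to $\calh_\radius$ exhibited in the representation formulas \eqref{eq:Representation_Hss} and \eqref{eq:Representation_Hpp}, namely the convolution of $\fundsolvelss$ (resp.\ $\fundsolvelpp$) with the body force $\proj f$ (resp.\ $\projcompl f$), and with the cut-off nonlinearity $\cutoff_\radius\nonlinss(\uvel)$ (resp.\ $\cutoff_\radius\nonlinpp(\uvel)$). For the body-force terms, Lemma \ref{lem:Conv_fLp} does the work directly: for $\snorm{\alpha}\le1$ and $\snorm{x}\ge2\radius_0$, estimates \eqref{est:Conv_fLp_velss} and \eqref{est:Conv_fLp_velpp} give $\snorml{D_x^\alpha\fundsolvelss\ast\proj f(x)}\le C\bb{\snorm{x}\np{1+\wakefct{\rey x}}}^{-1-\snorm{\alpha}/2}$ and $\snorml{D_x^\alpha\fundsolvelpp\ast\projcompl f(t,x)}\le C\snorm{x}^{-3-\snorm{\alpha}}$, with $C$ independent of $\radius$; since $1+\wakefct{\rey x}\ge\min\set{1,\rey}\np{1+\wakefct{x}}$, these are precisely the four decay rates weighted in the definition of $\velnorm{\cdot}$.

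For the nonlinear terms the key observation is that the factor $0\le\cutoff_\radius\le1$ makes the relevant pointwise bounds uniform in $\radius$. Indeed, $\snorm{\curl\zvel}\le C\snorm{\grad\zvel}$ together with Theorem \ref{thm:VelocityDecay} yields $\snorm{\curl\vvel(x)}\le C\bb{\np{1+\snorm{x}}\np{1+\wakefct{\rey x}}}^{-3/2}$ and $\snorm{\curl\wvel(t,x)}\le C\np{1+\snorm{x}}^{-4}$ for all $(t,x)$ (so, in particular, $\curl\uvel$ and $\uvel$ are bounded, which is what makes the convolution theorems applicable). Hence, using $\np{1+\snorm{x}}^{-7}\le C\np{1+\snorm{x}}^{-5/2}\np{1+\wakefct{x}}^{-5/2}$ (a consequence of $1+\wakefct{x}\le2\np{1+\snorm{x}}$) and $1+\wakefct{\rey x}\ge1$, the identities \eqref{eq:nonlinCurl_ss} and \eqref{eq:nonlinCurl_pp} with $\zvel=\uvel$ give
\[
\snorm{\nonlinss(\uvel)(x)}\le M\,\bb{\np{1+\snorm{x}}\np{1+\wakefct{x}}}^{-5/2},
\qquad
\snorm{\nonlinpp(\uvel)(t,x)}\le M\,\np{1+\snorm{x}}^{-9/2},
\]
with $M$ depending only on $\rey$ and on norms of $\uvel$. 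Since $0\le\cutoff_\radius\le1$, the functions $\cutoff_\radius\nonlinss(\uvel)$ and $\cutoff_\radius\nonlinpp(\uvel)$ satisfy the same bounds with the same $M$. Applying Theorem \ref{thm:ConvFundsolss} with $A=B=5/2$ (so that $A+\min\set{1,B}=7/2>3$ and $A+B=5\ge7/2$) to the steady-state convolutions, and Theorem \ref{thm:ConvFundsolpp} with $A=9/2$ and, say, $\varepsilon=\radius_0$ to the purely periodic ones, we obtain for $\snorm{\alpha}\le1$ and $\snorm{x}>\radius\ge2\radius_0$ that $\snorml{D_x^\alpha\fundsolvelss\ast[\cutoff_\radius\nonlinss(\uvel)](x)}\le CM\bb{\np{1+\snorm{x}}\np{1+\wakefct{\rey x}}}^{-1-\snorm{\alpha}/2}$ and $\snorml{D_x^\alpha\fundsolvelpp\ast[\cutoff_\radius\nonlinpp(\uvel)](t,x)}\le CM\np{1+\snorm{x}}^{-3-\snorm{\alpha}}$, again with $C$ independent of $\radius$.

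Adding the two contributions and using that on $\ball^{\radius}$ with $\radius\ge2\radius_0$ the quantities $\snorm{x}$ and $1+\snorm{x}$ are comparable (with constant depending only on $\radius_0$) and that $1+\wakefct{x}\le C\np{1+\wakefct{\rey x}}$ for some $C=C(\rey)$, one checks that each of the four terms in $\velnorm{\calh_\radius}$ is bounded by a constant depending only on $\radius_0$, $\rey$, $\per$ and on norms of the solution, but not on $\radius$; taking $\CHsvelnorm$ to be their sum proves the claim. The only genuinely delicate point is precisely this $\radius$-independence: it rests entirely on the bound $\cutoff_\radius\le1$ together with the fact that the constants in Lemma \ref{lem:Conv_fLp} and in Theorems \ref{thm:ConvFundsolss} and \ref{thm:ConvFundsolpp} do not depend on $\radius$. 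All the remaining steps are the routine convolution estimates supplied by those results.
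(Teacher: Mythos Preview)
Your proof is correct and follows essentially the same approach as the paper: derive the uniform pointwise bounds $\snorm{\cutoff_\radius\nonlinss(\uvel)}\le C\bb{\np{1+\snorm{x}}\np{1+\wakefct{x}}}^{-5/2}$ and $\snorm{\cutoff_\radius\nonlinpp(\uvel)}\le C\np{1+\snorm{x}}^{-9/2}$ from Theorem~\ref{thm:VelocityDecay} and $0\le\cutoff_\radius\le1$, then feed these, together with Lemma~\ref{lem:Conv_fLp} for the force terms, into Theorems~\ref{thm:ConvFundsolss} and~\ref{thm:ConvFundsolpp}. You are simply more explicit than the paper about the parameter choices in the convolution theorems and about why all constants are independent of $\radius$.
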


\begin{proof}
From the decay estimates of $\uvel$ and $\grad\uvel$ from 
Theorem \ref{thm:VelocityDecay} 
we conclude
\begin{align}
\snorml{\cutoff_\radius(x) \nonlinss\np{\uvel}(x)}
&\leq C \bb{\np{1+\snorm{x}}\np{1+\wakefct{x}}}^{-5/2},
\label{est:Hs_rhsss}\\
\snorml{\cutoff_\radius(x) \nonlinpp\np{\uvel}(t,x)}
&\leq C \np{1+\snorm{x}}^{-9/2}.
\label{est:Hs_rhspp}
\end{align}
By Theorem \ref{thm:ConvFundsolss} and Theorem \ref{thm:ConvFundsolpp},
these estimates and the formulas \eqref{eq:Representation_Hss} 
and \eqref{eq:Representation_Hpp}
together with Lemma \ref{lem:Conv_fLp}
imply
\[
\begin{aligned}
\snorm{\proj\calh_\radius(x)}
&\leq C \bb{\np{1+\snorm{x}}\np{1+\wakefct{x}}}^{-1},
\\
\snorm{\grad\proj\calh_\radius(x)}
&\leq C \bb{\np{1+\snorm{x}}\np{1+\wakefct{x}}}^{-3/2},
\\
\snorm{\projcompl\calh_\radius(t,x)}
&\leq C \np{1+\snorm{x}}^{-3},
\\
\snorm{\grad\projcompl\calh_\radius(t,x)}
&\leq C \np{1+\snorm{x}}^{-4}
\end{aligned}
\]
for all $t\in\torus$ and $\snorm{x}\geq\radius_0$.
Collecting these, we arrive at the claimed estimate.
\end{proof}

\begin{lem}\label{lem:Hs_vortest}
There exists a constant $ \CHsvortnorm>0$ such that for all $\radius\in[2\radius_0,\infty)$ we have
\[
\vortnorm{\calh_\radius}\leq \CHsvortnorm.
\]
\end{lem}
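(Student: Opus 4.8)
The plan is to feed the estimates of $\grad\fundsolvortss$ and $\grad\fundsolvortpp$ from Theorem~\ref{thm:FundsolVortss_est} and Theorem~\ref{thm:FundsolVortpp_est} into the representation formulas \eqref{eq:Representation_CurlHss} and \eqref{eq:Representation_CurlHpp} and to bound the resulting convolutions. As in the proof of Lemma~\ref{lem:Hs_velest}, the nonlinear source terms $\cutoff_\radius\nonlinss(\uvel)$ and $\cutoff_\radius\nonlinpp(\uvel)$ satisfy \eqref{est:Hs_rhsss} and \eqref{est:Hs_rhspp} and are supported in $\ball_{2\radius}$ (since $\supp\cutoff_\radius\subset\ball_{7\radius/4}$ and $7/4<2$), while $\proj f$ and $\projcompl f$ are supported in $\ball_{\radius_0}$. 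The $\proj f$- and $\projcompl f$-contributions to $\curl\proj\calh_\radius$ and $\curl\projcompl\calh_\radius$ are controlled directly by \eqref{est:Conv_fLp_vortss} and \eqref{est:Conv_fLp_vortpp}; since $\Kconst\leq\tfrac14\rey$, $\Kconst\leq\tfrac14\Cexpfs$ and $\radius\geq2\radius_0$, one has $\tfrac{\wakefct{\Kconst x}}{1+\radius}\leq\tfrac{\wakefct{\rey x}}{4(1+\radius_0)}$ and $\tfrac{\wakefct{\Kconst x}}{1+\radius}\leq\tfrac{\Cexpfs\snorm{x}}{2(1+\radius_0)}$, so the exponentials produced by Lemma~\ref{lem:Conv_fLp} dominate $\e^{-\wakefct{\Kconst x}/(1+\radius)}$, uniformly in $\radius\geq2\radius_0$. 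It therefore remains to treat the two nonlinear contributions.

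The main device for those is to split the exponential decay of each fundamental solution into two equal halves. On the support of the source one has $\snorm{y}\leq2\radius$, so Lemma~\ref{lem:Hs_EstExpFct} is applicable: in the steady-state case I would apply \eqref{est:Hs_vortest_ExpWakefct} with $a=\rey/4$ (using $\wakefct{(\rey/4)x}\geq\wakefct{\Kconst x}$, since $\Kconst\leq\rey/4$) to the factor $\e^{-\wakefct{\rey(x-y)}/4}$, and in the purely periodic case \eqref{est:Hs_vortest_ExpAbs} with $a=\Cexpfs/2$ (using $\wakefct{\Kconst x}\leq2\Kconst\snorm{x}\leq\tfrac12\Cexpfs\snorm{x}$) to the factor $\e^{-\Cexpfs\snorm{x-y}/2}$. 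In both cases this pulls a factor $\e^{-\wakefct{\Kconst x}/(1+\radius)}$ out of the integral, leaving the remaining half of the exponential together with the polynomial factors of \eqref{est:FundsolVortss_grad}, resp.\ \eqref{est:FundsolVortpp_grad}, inside.

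In the steady-state case there then remains $\int_{\R^3}\snorm{\grad\fundsolvortss(x-y)}\e^{\wakefct{\rey(x-y)}/4}\bb{(1+\snorm{y})(1+\wakefct{y})}^{-5/2}\,\dy$, to be bounded by $C\snorm{x}^{-3/2}$. On $\{\snorm{x-y}\geq1\}$ one combines \eqref{est:FundsolVortss_grad} with $\snorm{z}^{-2}\leq\snorm{z}^{-3/2}$, $\wakefct{\rey z}^{1/2}\e^{-\wakefct{\rey z}/8}\leq C$ and $\e^{-\wakefct{\rey z}/8}\leq C(1+\wakefct{z})^{-3/2}$ to get $\snorm{\grad\fundsolvortss(z)}\e^{\wakefct{\rey z}/4}\leq C\bb{(1+\snorm{z})(1+\wakefct{z})}^{-3/2}$, so that Lemma~\ref{lem:ConvEst_WakeFct_ThreeHalfs} with $A=B=5/2$ (note $A+\min\{1,B\}=7/2>3$) yields the claimed $C\snorm{x}^{-3/2}$. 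On the diagonal block $\{\snorm{x-y}<1\}$ one has $\snorm{x}\leq\snorm{y}+1<\tfrac74\radius+1$, hence $\wakefct{\Kconst x}/(1+\radius)$ is bounded by a constant depending only on $\radius_0$, so there the weight $\e^{-\wakefct{\Kconst x}/(1+\radius)}$ is harmless; it then suffices to use $\int_{\snorm{z}<1}\snorm{z}^{-2}\,\dz\leq C$ together with $\snorm{\cutoff_\radius\nonlinss(\uvel)(y)}\leq C(1+\snorm{y})^{-5/2}\leq C\snorm{x}^{-5/2}$, and $\snorm{x}^{-5/2}\leq C(\radius_0)\snorm{x}^{-3/2}\e^{-\wakefct{\Kconst x}/(1+\radius)}$ on this block because $\snorm{x}>\radius\geq2\radius_0$. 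Altogether this gives $\snorm{\curl\proj\calh_\radius(x)}\leq C\snorm{x}^{-3/2}\e^{-\wakefct{\Kconst x}/(1+\radius)}$ with $C$ independent of $\radius$.

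In the purely periodic case, H\"older's inequality in time with $q=6$, $q'=\tfrac65$ and $\gamma=\tfrac14$ (so that $q'<\tfrac1{1-\gamma}$, whence Theorem~\ref{thm:FundsolVortpp_est} applies, and $2+2\gamma<3$), together with $\norm{\cutoff_\radius\nonlinpp(\uvel)(\cdot,y)}_{\LR{6}(\torus)}\leq C(1+\snorm{y})^{-9/2}$ from \eqref{est:Hs_rhspp}, reduces the nonlinear term to $C\e^{-\wakefct{\Kconst x}/(1+\radius)}\int_{\R^3}\snorm{x-y}^{-5/2}\e^{-\Cexpfs\snorm{x-y}/2}(1+\snorm{y})^{-9/2}\,\dy$, which Lemma~\ref{lem:ConvolutionExp} (with $A=5/2<3$, $B=9/2$, $\alpha=\Cexpfs/2$) bounds by $C\snorm{x}^{-9/2}\e^{-\wakefct{\Kconst x}/(1+\radius)}$. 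Hence $\snorm{\curl\projcompl\calh_\radius(t,x)}\leq C\snorm{x}^{-9/2}\e^{-\wakefct{\Kconst x}/(1+\radius)}\leq C\snorm{x}^{-9/2+\varepsilon}\e^{-\wakefct{\Kconst x}/(1+\radius)}$, uniformly in $\radius\geq2\radius_0$. Combining the two bounds with $\snorm{x}>\radius\geq2\radius_0$ and the definition of $\vortnorm{\cdot}$ yields $\vortnorm{\calh_\radius}\leq\CHsvortnorm$. The step I expect to be most delicate is the steady-state diagonal block: there $x-y$ is of order one, so the weight $\e^{-\wakefct{\Kconst x}/(1+\radius)}$ cannot be generated from decay of $\grad\fundsolvortss(x-y)$, and one must instead use that $\snorm{x}$ is comparable to $\radius$ on that block so that the weight is bounded below — which is exactly the role of the denominator $1+\radius$ in the definition of $\vortnorm{\cdot}$, of $\radius\geq2\radius_0$, and of $\supp\cutoff_\radius\subset\ball_{2\radius}$; keeping every constant uniform in $\radius$ (while permitting dependence on $\radius_0$) is the bookkeeping one has to be careful with.
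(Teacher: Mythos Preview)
Your argument is correct and reaches the same bound as the paper, but the steady-state part is organized differently. The paper does not split the $y$-integral at $\snorm{x-y}=1$. Instead it splits in $x$: for $\snorm{x}\geq 2\radius$ one has $\snorm{x-y}\geq\snorm{x}/8\geq\radius_0/2$ on $\supp\cutoff_\radius$, so the singularity of $\grad\fundsolvortss$ never appears and Lemma~\ref{lem:Hs_EstExpFct} together with Lemma~\ref{lem:ConvEst_WakeFct_ThreeHalfs} gives \eqref{est:Hs_vortest_ss} directly; for $\radius\leq\snorm{x}\leq 2\radius$ the paper simply invokes the already-established bound $\snorm{\curl\proj\calh_\radius}\leq C\snorm{\grad\proj\calh_\radius}$ from Lemma~\ref{lem:Hs_velest} and observes that $\wakefct{\Kconst x}/(1+\radius)\leq 4\Kconst$ there. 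This avoids your diagonal-block analysis entirely at the price of appealing to Lemma~\ref{lem:Hs_velest}. Your route is a little more self-contained (it does not need the velocity estimate), but it requires checking the local integrability $\int_{\snorm{z}<1}\snorm{z}^{-2}\,\dz<\infty$ and the uniform lower bound on $\e^{-\wakefct{\Kconst x}/(1+\radius)}$ in the region $\radius<\snorm{x}<\tfrac{7}{4}\radius+1$; both are fine, and your remark that this is precisely where the denominator $1+\radius$ matters is the right intuition.

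For the purely periodic part your argument coincides with the paper's, except that the paper uses the simplest H\"older pairing $q=1$ for $\grad\fundsolvortpp$ (still with $\gamma=1/4$) and the $L^\infty$ bound \eqref{est:Hs_rhspp} for the source; your choice $q'=6/5$, $q=6$ works equally well but is not needed.
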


\begin{proof}
At first, let $x\in\R^3$ with $\snorm{x}\geq2\radius$.
For $\snorm{y}\leq7\radius/4$ we have
\[
\snorm{x-y}\geq\snorm{x}-\snorm{y}\geq\snorm{x}-7\radius/4
\geq\snorm{x}-7\snorm{x}/8
=\snorm{x}/8
\geq\radius/4\geq\radius_0/2.
\]
From \eqref{est:FundsolVortss_grad} and Lemma \ref{lem:Hs_EstExpFct}, we then conclude
\begin{align*}
\snorm{\grad\fundsolvortss(x-y)}
&\leq C \bp{\snorm{x-y}^{-2}+\snorm{x-y}^{-3/2}\wakefct{\rey \np{x-y}}^{1/2}}
\e^{-\wakefct{\rey \np{x-y}}/2}\\
&\leq C \bp{1+\snorm{x-y}^{-3/2}\bp{1+\wakefct{\rey\np{x-y}}}^{-3/2}}
\e^{-\wakefct{\rey \np{x-y}}/4}\\
&\leq C \bb{\np{1+\snorm{x-y}}\bp{1+\wakefct{\rey\np{x-y}}}}^{-3/2}
\e^{-\frac{\wakefct{\rey x}}{4\np{1+\radius}}}.
\end{align*}
In virtue of 
\eqref{eq:Representation_CurlHss}, \eqref{est:Conv_fLp_vortss} 
and \eqref{est:Hs_rhsss}
we thus obtain
\begin{align*}
&\snorm{\curl\proj\calh_\radius(x)}
\leq C\int_{\ball_{7\radius/4}} \snorml{\grad\fundsolvortss(x-y)} \,
\snorml{\proj f+\cutoff_\radius\nonlinss(\uvel)}(y)\,\dy\\
&\leq C\snorm{x}^{-3/2}\e^{-\frac{\wakefct{\rey x}}{4\np{1+\radius_0}}} \\
&\qquad + C \e^{-\frac{\wakefct{\rey x}}{4\np{1+\radius}}}
\int_{\R^3}\bb{\bp{1+\snorm{x-y}}\bp{1+\wakefct{x-y}}}^{-3/2}
\bb{\np{1+\snorm{y}}\np{1+\wakefct{y}}}^{-5/2}\,\dy
\end{align*}
for $\snorm{x}\geq2\radius\geq4\radius_0$.
By estimating the remaining integral with the help of Lemma \ref{lem:ConvEst_WakeFct_ThreeHalfs} 
and employing \eqref{est:Kconst}, we deduce 
\begin{equation}\label{est:Hs_vortest_ss}
\snorm{\curl\proj\calh_\radius(x)}
\leq C \e^{-\frac{\wakefct{\Kconst x}}{1+\radius}}\snorm{x}^{-3/2}
\end{equation}
for $\snorm{x}\geq2\radius$.
If $\radius\leq\snorm{x}\leq2\radius$,
then Lemma \ref{lem:Hs_velest} yields
\[
\snorm{\curl\proj\calh_\radius(x)}
\leq C \snorm{\grad\proj\calh_\radius(x)}
\leq C \bb{\np{1+\snorm{x}}\np{1+\wakefct{x}}}^{-3/2}
\leq C \snorm{x}^{-3/2}.
\]
Since $\snorm{x}\leq2\radius$ implies  
$
\wakefct{\Kconst x}/\np{1+\radius}
\leq 2\snorm{\Kconst x}/\np{1+\radius}
\leq 4\Kconst\radius/\np{1+\radius}
\leq 4\Kconst,
$
we have 
$1\leq\e^{4\Kconst}\e^{-\wakefct{\Kconst x}/\np{1+\radius}}$,
so that \eqref{est:Hs_vortest_ss} also holds for $\radius\leq\snorm{x}\leq2\radius$. 

Now let us turn to $\curl\projcompl\calh_\radius$. 
From \eqref{est:FundsolVortpp_grad}
and \eqref{est:Hs_vortest_ExpAbs}, for $\snorm{y}\leq2\radius$
we conclude
\[
\int_\torus\snorm{\grad\fundsolvortpp(t-s,x-y)}\,\ds
\leq C \snorm{x-y}^{-5/2} \e^{-\frac{ \Cexpfs\snorm{x-y}}{2}} 
\e^{-\frac{ \Cexpfs\snorm{x}}{2\np{1+\radius}}},
\]
so that  \eqref{eq:Representation_CurlHpp}, \eqref{est:Conv_fLp_vortpp}
and \eqref{est:Hs_rhspp} lead to
\begin{align*}
&\snorm{\curl\projcompl\calh_\radius(t,x)}
\leq C \int_{\ball_{7\radius/4}}\int_\torus\snorml{\grad\fundsolvortpp(t-s,x-y)}
\,\snorml{\projcompl f+\cutoff_\radius\nonlinpp\np{\uvel}}(s,y) \,\ds\dy\\
&\quad\qquad
\leq C\snorm{x}^{-9/2}\e^{-\frac{\Cexpfs\snorm{x}}{2\np{1+\radius_0}}}
+C \e^{-\frac{ \Cexpfs\snorm{x}}{2\np{1+\radius}}}
\int_{\R^3}\snorm{x-y}^{-5/2}\e^{- \Cexpfs\snorm{x-y}/2}
\np{1+\snorm{y}}^{-9/2}\,\dy.
\end{align*}
The remaining integral can be estimated with Lemma \ref{lem:ConvolutionExp}.
Further using \eqref{est:Kconst}, we end up with
\[
\snorm{\curl\projcompl\calh_\radius(t,x)}
\leq C \e^{-\frac{ \Cexpfs\snorm{x}}{2\np{1+\radius}}}
\snorm{x}^{-9/2}
\leq C \e^{-\frac{\wakefct{\Kconst x}}{1+\radius}}
\snorm{x}^{-9/2+\varepsilon}
\]
for $\snorm{x}\geq\radius\geq2\radius_0$ and $t\in\torus$.
A combination of this estimate with \eqref{est:Hs_vortest_ss} finishes the proof.
\end{proof}

In the next two lemmas we provide appropriate estimates of $\calf_\radius(\zvel)$.
Observe that, in contrast to  $\calh_\radius$, this term depends on the (unknown) function
$\zvel$. 
In order to eventually obtain a contraction for large $\radius$, 
we factor out the term $\radius^{-\varepsilon}$ in the estimates.

\begin{lem}\label{lem:Fs_velest}
There exists a constant $ \CFsvelnorm>0$ such that for all $\radius\in[2\radius_0,\infty)$
and all $\zvel_1,\zvel_2\in\velspace$ we have
\begin{align}
\velnorm{\calf_\radius(\zvel_1)}
&\leq \CFsvelnorm\radius^{-\varepsilon}\velnorm{\zvel_1}^2,
\label{est:Fs_quadratic}
\\
\velnorm{\calf_\radius(\zvel_1)-\calf_\radius(\zvel_2)}
&\leq \CFsvelnorm\radius^{-\varepsilon}\bp{\velnorm{\zvel_1}+\velnorm{\zvel_2}}
\velnorm{\zvel_1-\zvel_2}.
\label{est:Fs_contraction}
\end{align}
\end{lem}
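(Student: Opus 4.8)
The plan is to derive both inequalities from the representation formulas \eqref{eq:Representation_Fss} and \eqref{eq:Representation_Fpp}, which write $D_x^\alpha\proj\calf_\radius(\zvel)$ as a convolution of $D_x^\alpha\fundsolvelss$ with $\np{1-\cutoff_\radius}\nonlinss(\zvel)$ and $D_x^\alpha\projcompl\calf_\radius(\zvel)$ as a convolution of $D_x^\alpha\fundsolvelpp$ with $\np{1-\cutoff_\radius}\nonlinpp(\zvel)$, for $\snorm{\alpha}\le1$. Once the two ``source'' functions $\np{1-\cutoff_\radius}\nonlinss(\zvel)$ and $\np{1-\cutoff_\radius}\nonlinpp(\zvel)$ are bounded pointwise in the form required by Theorems~\ref{thm:ConvFundsolss} and~\ref{thm:ConvFundsolpp}, respectively, with a constant carrying an explicit factor $\radius^{-\varepsilon}$, the bound \eqref{est:Fs_quadratic} for $\calf_\radius(\zvel)$ follows at once; estimate \eqref{est:Fs_contraction} is then obtained by the same computation, using the bilinearity of $\zvel\mapsto\curl\zvel\wedge\zvel$.

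First I would record the pointwise bounds that follow directly from the definition of $\velnorm{\cdot}$ for $\zvel\in\velspace$: on $\ball^\radius$ one has $\snorm{\proj\zvel(x)}\le\velnorm{\zvel}\bb{\snorm{x}\np{1+\wakefct{x}}}^{-1}$, $\snorm{\curl\proj\zvel(x)}\le C\velnorm{\zvel}\bb{\snorm{x}\np{1+\wakefct{x}}}^{-3/2}$, $\snorm{\projcompl\zvel(t,x)}\le\velnorm{\zvel}\snorm{x}^{-3}$ and $\snorm{\curl\projcompl\zvel(t,x)}\le C\velnorm{\zvel}\snorm{x}^{-4}$ (using $\snorm{\curl w}\le C\snorm{\grad w}$). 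Inserting these into \eqref{eq:nonlinCurl_ss}, \eqref{eq:nonlinCurl_pp} and multiplying, the slowest-decaying contribution to $\nonlinss(\zvel)$ is $\curl\proj\zvel\wedge\proj\zvel$ and the slowest-decaying contribution to $\nonlinpp(\zvel)$ is $\curl\proj\zvel\wedge\projcompl\zvel$; all remaining terms decay strictly faster once one uses $1+\wakefct{x}\le C\snorm{x}$ on $\ball^\radius$. Hence, on $\ball^\radius$,
\begin{align*}
\snorm{\np{1-\cutoff_\radius}\nonlinss(\zvel)(x)}
&\le C\velnorm{\zvel}^2\bb{\snorm{x}\np{1+\wakefct{x}}}^{-5/2}
\le C\velnorm{\zvel}^2\np{1+\snorm{x}}^{-5/2}\np{1+\wakefct{x}}^{-5/2},\\
\snorm{\np{1-\cutoff_\radius}\nonlinpp(\zvel)(t,x)}
&\le C\velnorm{\zvel}^2\snorm{x}^{-9/2}\le C\velnorm{\zvel}^2\np{1+\snorm{x}}^{-9/2},
\end{align*}
where $\snorm{x}$ and $1+\snorm{x}$ are comparable on $\ball^\radius$ since $\radius\ge2\radius_0$. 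As $1-\cutoff_\radius$ is supported in $\ball^\radius$, I would finally trade one factor $\np{1+\snorm{x}}^{-\varepsilon}$ for $\np{1+\radius}^{-\varepsilon}\le\radius^{-\varepsilon}$, so that $\np{1-\cutoff_\radius}\nonlinss(\zvel)$ satisfies the hypothesis of Theorem~\ref{thm:ConvFundsolss} with $M=C\radius^{-\varepsilon}\velnorm{\zvel}^2$, $A=\tfrac{5}{2}-\varepsilon$, $B=\tfrac{5}{2}$, and $\np{1-\cutoff_\radius}\nonlinpp(\zvel)$ satisfies the hypothesis of Theorem~\ref{thm:ConvFundsolpp} with $M=C\radius^{-\varepsilon}\velnorm{\zvel}^2$, $A=\tfrac{9}{2}-\varepsilon$. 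Both functions lie in the relevant $\LR{\infty}$ spaces because $\zvel\in\velspace$, and the conditions $A\ge2$, $A+\min\set{1,B}>3$, $A+B\ge\tfrac{7}{2}$ (respectively $A>4$) hold since $\varepsilon<\tfrac{1}{4}$.

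Applying Theorems~\ref{thm:ConvFundsolss} and~\ref{thm:ConvFundsolpp} (part~1 for $\snorm{\alpha}=0$, part~2 for $\snorm{\alpha}=1$) then gives
\begin{align*}
\snorm{D_x^\alpha\proj\calf_\radius(\zvel)(x)}
&\le C\radius^{-\varepsilon}\velnorm{\zvel}^2\bb{\np{1+\snorm{x}}\np{1+\wakefct{\rey x}}}^{-1-\frac{\snorm{\alpha}}{2}},\\
\snorm{D_x^\alpha\projcompl\calf_\radius(\zvel)(t,x)}
&\le C\radius^{-\varepsilon}\velnorm{\zvel}^2\np{1+\snorm{x}}^{-3-\snorm{\alpha}},
\end{align*}
for $\snorm{x}\ge\radius$. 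Since $1+\wakefct{\rey x}=1+\rey\wakefct{x}\ge\min\set{1,\rey}\np{1+\wakefct{x}}$ and $1+\snorm{x}\ge\snorm{x}$, these are exactly the weights entering $\velnorm{\cdot}$, so summing the four contributions yields $\velnorm{\calf_\radius(\zvel)}\le C\radius^{-\varepsilon}\velnorm{\zvel}^2$, i.e. \eqref{est:Fs_quadratic}. For \eqref{est:Fs_contraction} I would use $\nonlin(\zvel_1)-\nonlin(\zvel_2)=-\curl(\zvel_1-\zvel_2)\wedge\zvel_1-\curl\zvel_2\wedge(\zvel_1-\zvel_2)$; projecting onto steady-state and purely periodic parts and repeating the estimates above, now with one factor controlled by $\velnorm{\zvel_1-\zvel_2}$ and the other by $\velnorm{\zvel_1}$ or $\velnorm{\zvel_2}$, gives \eqref{est:Fs_contraction}. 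The only genuinely delicate step is the bookkeeping in the second paragraph: one must verify that after forming the products in \eqref{eq:nonlinCurl_ss}--\eqref{eq:nonlinCurl_pp} the slowest term still carries enough spatial decay---and, for $\nonlinss$, enough anisotropic decay in the wake variable $\wakefct{x}$---to meet the hypotheses of Theorems~\ref{thm:ConvFundsolss} and~\ref{thm:ConvFundsolpp}, and that the surplus power $\snorm{x}^{-\varepsilon}$ can be transferred onto $\radius^{-\varepsilon}$ without violating those hypotheses, which is precisely where $\varepsilon\in(0,\tfrac{1}{4})$ is used.
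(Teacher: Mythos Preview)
Your proposal is correct and follows essentially the same approach as the paper's proof: you bound $\np{1-\cutoff_\radius}\nonlinss(\zvel)$ and $\np{1-\cutoff_\radius}\nonlinpp(\zvel)$ pointwise, extract the factor $\radius^{-\varepsilon}$ from the support condition, and apply Theorems~\ref{thm:ConvFundsolss} and~\ref{thm:ConvFundsolpp} via the representation formulas \eqref{eq:Representation_Fss}--\eqref{eq:Representation_Fpp}. Your verification of the exponent conditions and the bilinearity argument for \eqref{est:Fs_contraction} are also in line with the paper, which simply states that the second inequality ``is proved in the same fashion.''
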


\begin{proof}
For $\zvel\in\velspace$ 
we immediately deduce
\begin{align*}
\snorml{\np{1-\cutoff_\radius(x)}\nonlinss(\zvel)(x)}
&\leq C \velnorm{\zvel}^2\np{1-\cutoff_\radius(x)}\bb{\np{1+\snorm{x}}\np{1+\wakefct{x}}}^{-5/2} \\
&\leq C \radius^{-\varepsilon}\velnorm{\zvel}^2\np{1+\snorm{x}}^{-5/2+\varepsilon}\np{1+\wakefct{x}}^{-5/2},
\\
\snorml{\np{1-\cutoff_\radius(x)} \nonlinpp\np{\zvel}(t,x)}
&\leq C \velnorm{\zvel}^2\np{1-\cutoff_\radius(x)}\np{1+\snorm{x}}^{-9/2}\\
&\leq C \radius^{-\varepsilon}\velnorm{\zvel}^2\np{1+\snorm{x}}^{-9/2+\varepsilon}
\end{align*}
for $\snorm{x}\geq\radius$.
By Theorem \ref{thm:ConvFundsolss} and Theorem \ref{thm:ConvFundsolpp},
from these estimates and the formulas \eqref{eq:Representation_Fss}
and \eqref{eq:Representation_Fpp}
we conclude
\[
\begin{aligned}
\snorm{\proj\calf_\radius(\zvel)(x)}
&\leq C \radius^{-\varepsilon}\velnorm{\zvel}^2\bb{\np{1+\snorm{x}}\np{1+\wakefct{x}}}^{-1},
\\
\snorm{\grad\proj\calf_\radius(\zvel)(x)}
&\leq C \radius^{-\varepsilon}\velnorm{\zvel}^2\bb{\np{1+\snorm{x}}\np{1+\wakefct{x}}}^{-3/2},
\\
\snorm{\projcompl\calf_\radius(\zvel)(t,x)}
&\leq C \radius^{-\varepsilon}\velnorm{\zvel}^2\np{1+\snorm{x}}^{-3},
\\
\snorm{\grad\projcompl\calf_\radius(\zvel)(t,x)}
&\leq C \radius^{-\varepsilon}\velnorm{\zvel}^2\np{1+\snorm{x}}^{-4}.
\end{aligned}
\]
Collecting these estimates, we obtain \eqref{est:Fs_quadratic}.
 The inequality \eqref{est:Fs_contraction} is proved in the same fashion.
\end{proof}

\begin{lem}\label{lem:Fs_vortest}
There exists a constant $ \CFsvortnorm>0$ such that for all $\radius\in[2\radius_0,\infty)$
and all $\zvel_1,\zvel_2\in\vortspace$ we have
\begin{align}
\vortnorm{\calf_\radius(\zvel_1)}&\leq \CFsvortnorm\radius^{-\varepsilon}\velnorm{\zvel_1}\vortnorm{\zvel_1}, 
\label{est:Fs_curlquadratic}\\
\vortnorm{\calf_\radius(\zvel_1)-\calf_\radius(\zvel_2)}
&\leq \CFsvortnorm\radius^{-\varepsilon}
\bp{\norm{\zvel_1}_{\vortspace}+\norm{\zvel_2}_{\vortspace}}
\norm{\zvel_1-\zvel_2}_{\vortspace}.
\label{est:Fs_curlcontraction}
\end{align}
\end{lem}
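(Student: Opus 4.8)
The plan is to follow the scheme of the proof of Lemma~\ref{lem:Hs_vortest}, with one structural change: the right-hand sides of the representation formulas \eqref{eq:Representation_CurlFss} and \eqref{eq:Representation_CurlFpp} are now supported in $\ball^\radius$ and decay \emph{exponentially}, so the exponential weight will have to be transported from the source point to the target point rather than pulled out of the kernel. First I would bound the nonlinear terms pointwise. For $\zvel\in\vortspace$ and $\snorm{y}\geq\radius$, every summand of $\nonlinss(\zvel)$ and $\nonlinpp(\zvel)$ is a product of a curl-factor, controlled by $\vortnorm{\zvel}$ and carrying the weight $\e^{-\wakefct{\Kconst y}/(1+\radius)}$, and a velocity-factor controlled by $\velnorm{\zvel}$; combining these with $\np{1-\cutoff_\radius(y)}\leq\radius^{-\varepsilon}\snorm{y}^{\varepsilon}$ on $\supp\np{1-\cutoff_\radius}$, the slowest-decaying contributions are
\[
\snorml{\np{1-\cutoff_\radius(y)}\nonlinss(\zvel)(y)}\leq C\radius^{-\varepsilon}\velnorm{\zvel}\vortnorm{\zvel}\,\snorm{y}^{-5/2+\varepsilon}\np{1+\wakefct{y}}^{-1}\e^{-\frac{\wakefct{\Kconst y}}{1+\radius}},
\]
\[
\snorml{\np{1-\cutoff_\radius(y)}\nonlinpp(\zvel)(t,y)}\leq C\radius^{-\varepsilon}\velnorm{\zvel}\vortnorm{\zvel}\,\snorm{y}^{-9/2+\varepsilon}\e^{-\frac{\wakefct{\Kconst y}}{1+\radius}},
\]
all remaining terms decaying strictly faster in $\snorm{y}$. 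The factor $\radius^{-\varepsilon}$ extracted here is what eventually produces the contraction, and it is the reason the exponent $9/2-\varepsilon$ rather than $9/2$ is built into $\vortnorm{\cdot}$.

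For the steady-state part I would insert the first bound into \eqref{eq:Representation_CurlFss} and transport the weight using subadditivity of $\wakefct{\cdot}$, namely $\e^{\wakefct{\Kconst x}/(1+\radius)}\snorm{\grad\fundsolvortss(x-y)}\e^{-\wakefct{\Kconst y}/(1+\radius)}\leq\e^{\wakefct{\Kconst(x-y)}/(1+\radius)}\snorm{\grad\fundsolvortss(x-y)}$. Since $\Kconst\leq\rey/4$ and $1+\radius\geq1$, the exponential in \eqref{est:FundsolVortss_grad} dominates $\e^{\Kconst\wakefct{x-y}/(1+\radius)}$, and a routine computation then gives $\e^{\wakefct{\Kconst(x-y)}/(1+\radius)}\snorm{\grad\fundsolvortss(x-y)}\leq C\bb{\np{1+\snorm{x-y}}\np{1+\wakefct{x-y}}}^{-3/2}$ for $\snorm{x-y}\geq1$ and $\leq C\snorm{x-y}^{-2}$ for $\snorm{x-y}\leq1$. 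On $\set{\snorm{x-y}\geq1}$ I would invoke Lemma~\ref{lem:ConvEst_WakeFct_ThreeHalfs} with $A=5/2-\varepsilon$ and $B=1$ (admissible since $\varepsilon<1/2$) to obtain a bound $C\snorm{x}^{-3/2}$; on $\set{\snorm{x-y}<1}$, where necessarily $\snorm{y}\geq\snorm{x}-1\geq\snorm{x}/2$, the local singularity $\snorm{x-y}^{-2}$ is integrable and contributes only $C\snorm{x}^{-5/2+\varepsilon}\leq C\snorm{x}^{-3/2}$. Hence $\snorm{x}^{3/2}\e^{\wakefct{\Kconst x}/(1+\radius)}\snorm{\curl\proj\calf_\radius(\zvel)(x)}\leq C\radius^{-\varepsilon}\velnorm{\zvel}\vortnorm{\zvel}$ for $\snorm{x}\geq\radius$.

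For the purely periodic part I would estimate \eqref{eq:Representation_CurlFpp} by H\"older's inequality in time against $\norm{\grad\fundsolvortpp(\cdot,x-y)}_{\LR{q}(\torus)}$, taking $q=1$ and $\gamma=\tfrac14$ in Theorem~\ref{thm:FundsolVortpp_est} so that this norm is $\leq C\snorm{x-y}^{-5/2}\e^{-\Cexpfs\snorm{x-y}}$. Transporting the weight exactly as before and using $\Kconst\leq\Cexpfs/4$, the factor $\e^{-\Cexpfs\snorm{x-y}}$ absorbs $\e^{\Kconst\wakefct{x-y}/(1+\radius)}$, leaving the kernel $\snorm{x-y}^{-5/2}\e^{-\Cexpfs\snorm{x-y}/2}$; Lemma~\ref{lem:ConvolutionExp} with $A=5/2<3$ and $B=9/2-\varepsilon$, applied against $\snorm{y}^{-9/2+\varepsilon}$, then yields $C\snorm{x}^{-9/2+\varepsilon}$, i.e.\ $\snorm{x}^{9/2-\varepsilon}\e^{\wakefct{\Kconst x}/(1+\radius)}\snorm{\curl\projcompl\calf_\radius(\zvel)(t,x)}\leq C\radius^{-\varepsilon}\velnorm{\zvel}\vortnorm{\zvel}$. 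Combining the two parts gives \eqref{est:Fs_curlquadratic}. For \eqref{est:Fs_curlcontraction} I would write $\nonlin(\zvel_1)-\nonlin(\zvel_2)=-\curl\np{\zvel_1-\zvel_2}\wedge\zvel_1-\curl\zvel_2\wedge\np{\zvel_1-\zvel_2}$ and repeat the above with one factor in each product replaced by the difference; the full norm $\norm{\cdot}_{\vortspace}=\velnorm{\cdot}+\vortnorm{\cdot}$ enters because the two factors of each product are controlled by the two different seminorms.

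I expect the one genuinely delicate point to be exactly this weight transport: in contrast to Lemma~\ref{lem:Hs_vortest}, the source is not compactly supported, so $\snorm{x-y}$ is not bounded below and Lemma~\ref{lem:Hs_EstExpFct} is unavailable; the exponential decay must instead be supplied by $\vortnorm{\zvel}$ and matched against that of the vorticity fundamental solution. This matching works precisely because $\Kconst=\tfrac14\min\set{\rey,\Cexpfs}$ was chosen so that $\Kconst\leq\rey/4$ and $\Kconst\leq\Cexpfs/4$, cf.\ \eqref{est:Kconst}, while the $\varepsilon$-loss in $\vortnorm{\cdot}$ is what keeps the exponents in Lemmas~\ref{lem:ConvEst_WakeFct_ThreeHalfs} and~\ref{lem:ConvolutionExp} admissible after the $\radius^{-\varepsilon}$ has been factored out.
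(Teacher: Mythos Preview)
Your proposal is correct and follows essentially the same route as the paper's proof: the same pointwise bounds on $\np{1-\cutoff_\radius}\nonlinss(\zvel)$ and $\np{1-\cutoff_\radius}\nonlinpp(\zvel)$, the same exponential ``weight transport'' via subadditivity of $\wakefct{\cdot}$ combined with \eqref{est:Kconst} (the paper records this as $\e^{-\wakefct{\rey(x-y)}/4}\e^{-\wakefct{\Kconst y}/(1+\radius)}\leq\e^{-\wakefct{\Kconst x}/(1+\radius)}$, which is just your inequality rearranged), and the same appeals to Lemma~\ref{lem:ConvEst_WakeFct_ThreeHalfs} and Lemma~\ref{lem:ConvolutionExp} for the far and purely periodic pieces respectively. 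The only cosmetic difference is that the paper splits the steady-state integral at $\snorm{x-y}=\radius_0$ rather than at $\snorm{x-y}=1$; either works.
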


\begin{proof}
For $\zvel\in\vortspace$ we have
\begin{align}
&\begin{aligned}
\snorml{\np{1-\cutoff_\radius(x)}\nonlinss(\zvel)(x)}
&\leq C \velnorm{\zvel}\vortnorm{\zvel}\np{1-\cutoff_\radius(x)}
\snorm{x}^{-5/2}\np{1+\wakefct{x}}^{-1}\e^{-\frac{\wakefct{\Kconst x}}{1+\radius}}  \\
&\leq C \radius^{-\varepsilon}\velnorm{\zvel}\vortnorm{\zvel}
\snorm{x}^{-5/2+\varepsilon}\np{1+\wakefct{x}}^{-1}\e^{-\frac{\wakefct{\Kconst x}}{1+\radius}},
\end{aligned}
\label{est:FsCurl_rhsss}
\\
&\begin{aligned}
\snorml{\np{1-\cutoff_\radius(x)} \nonlinpp\np{\zvel}(t,x)}
&\leq C \velnorm{\zvel}\vortnorm{\zvel}\np{1-\cutoff_\radius(x)}\snorm{x}^{-9/2}
\e^{-\frac{\wakefct{\Kconst x}}{1+\radius}}\\
&\leq C \radius^{-\varepsilon}\velnorm{\zvel}\vortnorm{\zvel}\snorm{x}^{-9/2+\varepsilon}
\e^{-\frac{\wakefct{\Kconst x}}{1+\radius}}
\end{aligned}
\label{est:FsCurl_rhspp}
\end{align}
for $\snorm{x}\geq\radius$.
Exploiting the representation formula \eqref{eq:Representation_CurlFss},
we can employ \eqref{est:FundsolVortss_grad} and \eqref{est:FsCurl_rhsss} to estimate
\begin{align*}
\snorm{\curl\proj\calf_\radius(\zvel)(x)}
&\leq C\int_{\R^3}\snorml{\grad\fundsolvortss(x-y)}\,\snorml{
\np{1-\cutoff_\radius(y)}\nonlinss(\zvel)(y)}\,\dy\\
&\leq C \radius^{-\varepsilon}\velnorm{\zvel}\vortnorm{\zvel}\bp{I_1+I_2},
\end{align*}
where
\[
\begin{aligned}
I_1
&\coloneqq\int_{\ball^\radius\cap\ball_{\radius_0}(x)}
\snorm{x-y}^{-2}\e^{-\frac{\wakefct{\rey\np{x-y}}}{4}}
\snorm{y}^{-5/2+\varepsilon}\np{1+\wakefct{y}}^{-1}\e^{-\frac{\wakefct{\Kconst y}}{1+\radius}}\,\dy, 
\\
I_2
&\coloneqq\int_{\ball^\radius\cap\ball^{\radius_0}(x)}
\bb{\snorm{x-y}\wakefct{\rey \np{x-y}}}^{-3/2}\e^{-\frac{\wakefct{\rey\np{x-y}}}{4}} \snorm{y}^{-5/2+\varepsilon}\np{1+\wakefct{y}}^{-1}
\e^{-\frac{\wakefct{\Kconst y}}{1+\radius}}\,\dy.
\end{aligned}
\]
To give estimates of these integrals, we first note that by
$\wakefct{\rey(x-y)}\geq\wakefct{\rey x}-\wakefct{\rey y}$
and \eqref{est:Kconst}, we have
\begin{align}\label{est:Fs_vortest_ExpConv}
\e^{-\frac{\wakefct{\rey\np{x-y}}}{4}}\e^{-\frac{\wakefct{\Kconst y}}{1+\radius}}
\leq\e^{-\frac{\wakefct{\rey x}}{4\np{1+\radius}}}
\e^{\frac{\wakefct{\rey y}}{4\np{1+\radius}}}
\e^{-\frac{\wakefct{\Kconst y}}{1+\radius}}
\leq\e^{-\frac{\wakefct{\Kconst x}}{1+\radius}}
\end{align}
for all $x,\,y\in\R^3$.
On the one hand, exploiting this estimate and
that $\snorm{x-y}\leq\radius_0\leq\snorm{x}/2$ implies
$\snorm{y}\geq\snorm{x}-\snorm{x-y}
\geq\snorm{x}-\radius_0
\geq\snorm{x}/2$,
we conclude
\[
I_1\leq C \e^{-\frac{\wakefct{\Kconst x}}{1+\radius}}\snorm{x}^{-5/2+\varepsilon}
\int_{\ball_{\radius_0}(x)}\snorm{x-y}^{-2}\,\dy
\leq C \e^{-\frac{\wakefct{\Kconst x}}{1+\radius}}\snorm{x}^{-3/2}
\]
for $\snorm{x}\geq\radius\geq2\radius_0$.
On the other hand, due to \eqref{est:Fs_vortest_ExpConv}
and the fact that $\snorm{y}\geq\radius\geq2\radius_0$ implies 
$\snorm{y}\geq C \np{1+\snorm{y}}$, we obtain
\begin{align*}
I_2
&\leq C 
\e^{-\frac{\wakefct{\Kconst x}}{1+\radius}}
\int_{\R^3}\bb{\np{1+\snorm{x-y}}\wakefct{x-y}}^{-3/2}
\np{1+\snorm{y}}^{-5/2+\varepsilon}\np{1+\wakefct{y}}^{-1}\,\dy\\
&\leq C \e^{-\frac{\wakefct{\Kconst x}}{1+\radius}}\snorm{x}^{-3/2}
\end{align*}
by Lemma \ref{lem:ConvEst_WakeFct_ThreeHalfs}.
From the estimates of $I_1$ and $I_2$
we deduce
\[
\snorm{\curl\proj\calf_\radius(\zvel)(x)}
\leq C \radius^{-\varepsilon}\velnorm{\zvel}\vortnorm{\zvel}
\e^{-\frac{\wakefct{\Kconst x}}{1+\radius}}
\snorm{x}^{-3/2}.
\]
Now let us turn to the purely periodic part $\projcompl\calf_\radius(\zvel)$.
From \eqref{est:FundsolVortpp_grad} (with $q=1$ and $\gamma=1/4$) we conclude
\[
\int_\torus\snorm{\grad\fundsolvortpp(t-s,x-y)}\,\ds
\leq C \snorm{x-y}^{-5/2} \e^{- \Cexpfs\snorm{x-y}}.
\]
With formula \eqref{eq:Representation_CurlFpp} and estimate \eqref{est:FsCurl_rhspp}
we thus obtain
\begin{align*}
\snorm{\curl\projcompl\calf_\radius(\zvel)(t,x)}
&\leq C\int_\torus\int_{\R^3}\snorml{\grad\fundsolvortpp(t-s,x-y)}\,\snorml{
\np{1-\cutoff_\radius(y)}\nonlinpp(\zvel)(s,y)}\,\dy\ds\\
&\leq C \radius^{-\varepsilon}\velnorm{\zvel}\vortnorm{\zvel}
\int_{\ball^\radius}\snorm{x-y}^{-5/2} \e^{- \Cexpfs\snorm{x-y}}
\snorm{y}^{-9/2+\varepsilon}\e^{-\frac{\wakefct{\Kconst y}}{1+\radius}}\,\dy.
\end{align*}
By \eqref{est:Kconst} we have
\[
\e^{-\frac{ \Cexpfs\snorm{x-y}}{2}}\e^{-\frac{\wakefct{\Kconst y}}{1+\radius}}
\leq\e^{-\wakefct{\Kconst\np{x-y}}}\e^{-\frac{\wakefct{\Kconst y}}{1+\radius}}
\leq\e^{-\frac{\wakefct{\Kconst\np{x-y}}}{1+\radius}}\e^{-\frac{\wakefct{\Kconst y}}{1+\radius}}
\leq\e^{-\frac{\wakefct{\Kconst x}}{1+\radius}}.
\]
This yields
\[
\begin{aligned}
&\snorm{\curl\projcompl\calf_\radius(\zvel)(t,x)}\\
&\qquad\leq C \radius^{-\varepsilon}\velnorm{\zvel}\vortnorm{\zvel}
\e^{-\frac{\wakefct{\Kconst x}}{1+\radius}}
\int_{\R^3}\snorm{x-y}^{-5/2} \e^{-\frac{ \Cexpfs\snorm{x-y}}{2}}
\np{1+\snorm{y}}^{-9/2+\varepsilon}\,\dy.
\end{aligned}
\]
Employing Lemma \ref{lem:ConvolutionExp} to estimate the remaining integral,
we end up with
\[
\snorm{\curl\projcompl\calf_\radius(\zvel)(t,x)}
\leq C \radius^{-\varepsilon}\velnorm{\zvel}\vortnorm{\zvel}
\e^{-\frac{\wakefct{\Kconst x}}{1+\radius}}\snorm{x}^{-9/2+\varepsilon}.
\]
In total, we have thus shown \eqref{est:Fs_curlquadratic}. 
Estimate \eqref{est:Fs_curlcontraction} is derived in the same way.
\end{proof}

\section{Conclusion of the proof}
\label{sec:ConclusionProof}

After the preparatory results from the previous section, 
we now prove the existence of a function $\zvel\in\vortspace$
satisfying the fixed-point equation
\[
\zvel=\calf_\radius(\zvel)+\calh_\radius
\]
provided $\radius\geq2\radius_0$ is chosen sufficiently large.
Afterwards, we show uniqueness of this fixed point in the function class $\velspace$.
Since $\restriction{\uvel}{\torus\times\ball^\radius}$ is another solution to
this fixed-point equation and belongs to $\velspace$ by Theorem \ref{thm:VelocityDecay},
we then conclude that $\zvel$ coincides with $\restriction{\uvel}{\torus\times\ball^\radius}$. 
This yields the decay rate of the vorticity field asserted in Theorem \ref{thm:VorticityDecay}
up to a factor $\snorm{x}^{-\varepsilon}$ for the purely periodic part.
Returning to the representation formula \eqref{eq:RepresentationVorticity_CurluWedgeu_pp},
we finally omit this factor and complete the proof of Theorem \ref{thm:VorticityDecay}.

To begin with, for $\radius\in[2\radius_0,\infty)$ consider the closed subset
\[
\calb_\radius\coloneqq\setcl{\zvel\in\vortspace}
{\norm{\zvel}_{\vortspace}\leq \CHsvelnorm+ \CHsvortnorm+1}
\]
of the Banach space $\vortspace$. 
Choose $\radius_1\in[2\radius_0,\infty)$ so large that 
for all $\radius\in[S_1,\infty)$ we have
\begin{align*}
\np{ \CFsvelnorm+ \CFsvortnorm}
\np{ \CHsvelnorm+ \CHsvortnorm+1}^2\radius^{-\varepsilon}
&\leq 1, 
\\
\np{ \CFsvelnorm+ \CFsvortnorm}
\np{ \CHsvelnorm+ \CHsvortnorm+1}\radius^{-\varepsilon} 
&\leq\fourth.
\end{align*}
Thus, we obtain the existence of a fixed point of $\zvel\mapsto\calf_\radius(\zvel)+\calh_\radius$.

\begin{cor}\label{cor:ExistenceFixedPoint}
For any $\radius\in[\radius_1,\infty)$ there is a function $\zvel_\radius\in\calb_\radius$ with
$\zvel_\radius=\calf_\radius(\zvel_\radius)+\calh_\radius$.
\end{cor}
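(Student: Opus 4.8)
The plan is to realise $\zvel_\radius$ as the unique fixed point of the affine‑type map
\[
T_\radius\colon\vortspace\to\vortspace,\qquad T_\radius(\zvel)\coloneqq\calf_\radius(\zvel)+\calh_\radius,
\]
by applying the contraction mapping principle on $\calb_\radius$, which is a complete metric space since it is a closed subset of the Banach space $\vortspace$. So the whole argument reduces to verifying (i) $T_\radius(\calb_\radius)\subset\calb_\radius$ and (ii) that $T_\radius$ is a strict contraction on $\calb_\radius$, for every $\radius\geq\radius_1$.

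For (i), first I would recall that Lemma \ref{lem:Hs_velest} and Lemma \ref{lem:Hs_vortest} give $\norm{\calh_\radius}_{\vortspace}=\velnorm{\calh_\radius}+\vortnorm{\calh_\radius}\leq\CHsvelnorm+\CHsvortnorm$ for all $\radius\geq2\radius_0$. Next, using $\velnorm{\zvel}\leq\norm{\zvel}_{\vortspace}$ and $\vortnorm{\zvel}\leq\norm{\zvel}_{\vortspace}$, the quadratic bounds \eqref{est:Fs_quadratic} and \eqref{est:Fs_curlquadratic} add up to
\[
\norm{\calf_\radius(\zvel)}_{\vortspace}=\velnorm{\calf_\radius(\zvel)}+\vortnorm{\calf_\radius(\zvel)}\leq\CFsvelnorm\radius^{-\varepsilon}\velnorm{\zvel}^2+\CFsvortnorm\radius^{-\varepsilon}\velnorm{\zvel}\vortnorm{\zvel}\leq(\CFsvelnorm+\CFsvortnorm)\radius^{-\varepsilon}\norm{\zvel}_{\vortspace}^2 .
\]
For $\zvel\in\calb_\radius$ this is bounded by $(\CFsvelnorm+\CFsvortnorm)(\CHsvelnorm+\CHsvortnorm+1)^2\radius^{-\varepsilon}$, hence $\leq1$ for $\radius\geq\radius_1$ by the first smallness condition imposed on $\radius_1$. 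Therefore $\norm{T_\radius(\zvel)}_{\vortspace}\leq\norm{\calf_\radius(\zvel)}_{\vortspace}+\norm{\calh_\radius}_{\vortspace}\leq\CHsvelnorm+\CHsvortnorm+1$, i.e. $T_\radius(\zvel)\in\calb_\radius$.

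For (ii), since $T_\radius(\zvel_1)-T_\radius(\zvel_2)=\calf_\radius(\zvel_1)-\calf_\radius(\zvel_2)$, adding the Lipschitz bounds \eqref{est:Fs_contraction} and \eqref{est:Fs_curlcontraction} yields, for $\zvel_1,\zvel_2\in\calb_\radius$,
\[
\norm{T_\radius(\zvel_1)-T_\radius(\zvel_2)}_{\vortspace}\leq(\CFsvelnorm+\CFsvortnorm)\radius^{-\varepsilon}\bp{\norm{\zvel_1}_{\vortspace}+\norm{\zvel_2}_{\vortspace}}\norm{\zvel_1-\zvel_2}_{\vortspace}\leq2(\CFsvelnorm+\CFsvortnorm)(\CHsvelnorm+\CHsvortnorm+1)\radius^{-\varepsilon}\norm{\zvel_1-\zvel_2}_{\vortspace},
\]
and the second smallness condition on $\radius_1$ bounds the prefactor by $\tfrac12$ for $\radius\geq\radius_1$. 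Hence $T_\radius$ is a strict contraction on $\calb_\radius$, and Banach's fixed‑point theorem produces a (unique) $\zvel_\radius\in\calb_\radius$ with $\zvel_\radius=\calf_\radius(\zvel_\radius)+\calh_\radius$. There is no genuine obstacle here, since all the analytic substance is already packaged into Lemmas \ref{lem:Hs_velest}--\ref{lem:Fs_vortest}; the only point to keep in mind is that the ball $\calb_\radius$ is centred at $\calh_\radius$ and has radius $\CHsvelnorm+\CHsvortnorm+1$ \emph{independent of} $\radius$, so that the $\radius^{-\varepsilon}$ gain in the $\calf_\radius$‑estimates can be used to absorb the fixed constants $\CFsvelnorm,\CFsvortnorm$ once $\radius$ is chosen large enough.
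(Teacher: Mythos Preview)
Your proposal is correct and follows the same approach as the paper: both apply Banach's fixed-point theorem to $T_\radius=\calf_\radius+\calh_\radius$ on the closed ball $\calb_\radius$, invoking Lemmas \ref{lem:Hs_velest}--\ref{lem:Fs_vortest} together with the two smallness conditions defining $\radius_1$. Your write-up simply spells out the arithmetic that the paper leaves implicit. One small inaccuracy in your closing remark: $\calb_\radius$ is the closed ball of radius $\CHsvelnorm+\CHsvortnorm+1$ centred at the \emph{origin} of $\vortspace$, not at $\calh_\radius$; the point is rather that $\calh_\radius$ lies well inside this ball (with margin $1$), which is what allows the self-mapping step to go through.
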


\begin{proof}
By the Lemma \ref{lem:Hs_velest}, Lemma \ref{lem:Hs_vortest}, 
Lemma \ref{lem:Fs_velest} and Lemma \ref{lem:Fs_vortest}
and the choice of $\radius_1$, the mapping
\[
F_\radius\colon\calb_\radius\to\calb_\radius,\qquad
F_\radius(\zvel)\coloneqq\calf_\radius(\zvel)+\calh_\radius
\]
is a well-defined contractive self-mapping for any $\radius\geq\radius_1$. 
The contraction mapping principle thus implies 
the existence of the asserted fixed point $\zvel_\radius\in\calb_\radius$ 
of $F_\radius$.
\end{proof}

Next we show that $\zvel_\radius$ coincides with 
$\restriction{\uvel}{\torus\times\ball^\radius}$ 
for $\radius$ sufficiently large.
This yields pointwise estimates of $\uvel$.

\begin{lem}\label{lem:SlowerDecay}
There exists $\radius_2\in[\radius_1,\infty)$ such that for all $\radius\in[\radius_2,\infty)$ we have
\begin{align*}
\snorm{\curl\proj\uvel(x)}
&\leq\np{ \CHsvelnorm+ \CHsvortnorm+1}
\snorm{x}^{-3/2}\e^{-\frac{\wakefct{\Kconst x}}{1+\radius}}, \\
\snorm{\curl\projcompl\uvel(t,x)}
&\leq\np{ \CHsvelnorm+ \CHsvortnorm+1}
\snorm{x}^{-9/2+\varepsilon}\e^{-\frac{\wakefct{\Kconst x}}{1+\radius}}
\end{align*}
for all $t\in\torus$ and $x\in\ball^{\radius}$.
\end{lem}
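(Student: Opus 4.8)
The plan is to identify the fixed point $\zvel_\radius$ furnished by Corollary \ref{cor:ExistenceFixedPoint} with the restriction $\restriction{\uvel}{\torus\times\ball^\radius}$, and then to read off the claimed bounds from the inclusion $\zvel_\radius\in\calb_\radius$. First, by \eqref{eq:uFixedPoint_VortDecay} the restriction $\restriction{\uvel}{\torus\times\ball^\radius}$ solves the very same fixed-point equation $\zvel=\calf_\radius(\zvel)+\calh_\radius$. Moreover, Theorem \ref{thm:VelocityDecay} yields $\restriction{\uvel}{\torus\times\ball^\radius}\in\velspace$ together with a bound
\[
\velnorm{\restriction{\uvel}{\torus\times\ball^\radius}}\leq M_0,
\]
where $M_0=M_0(\rey,\radius_0)$ is \emph{independent of} $\radius$: the estimates \eqref{est:VelocityDecay_fct_ss}--\eqref{est:VelocityDecay_grad_pp} hold for all $x\in\R^3$, so enlarging $\radius$ only shrinks the domain of the essential suprema defining $\velnorm{\cdot}$, while finiteness at $\radius=\radius_0$ is a direct consequence of Theorem \ref{thm:VelocityDecay}, the relevant weights being comparable for $\snorm{x}\geq\radius_0$ up to constants depending only on $\rey$ and $\radius_0$.

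The heart of the argument is uniqueness of the fixed point of $\zvel\mapsto\calf_\radius(\zvel)+\calh_\radius$ in the \emph{larger} space $\velspace$; one must work in $\velspace$ rather than $\vortspace$, since membership of $\restriction{\uvel}{\torus\times\ball^\radius}$ in $\vortspace$ is exactly what is to be proved. If $\zvel_1,\zvel_2\in\velspace$ both solve the fixed-point equation, then $\zvel_1-\zvel_2=\calf_\radius(\zvel_1)-\calf_\radius(\zvel_2)$, so \eqref{est:Fs_contraction} gives
\[
\velnorm{\zvel_1-\zvel_2}\leq\CFsvelnorm\,\radius^{-\varepsilon}\bp{\velnorm{\zvel_1}+\velnorm{\zvel_2}}\velnorm{\zvel_1-\zvel_2}.
\]
I would apply this with $\zvel_1=\zvel_\radius$, for which $\velnorm{\zvel_\radius}\leq\norm{\zvel_\radius}_{\vortspace}\leq\CHsvelnorm+\CHsvortnorm+1$ by the definition of $\calb_\radius$, and $\zvel_2=\restriction{\uvel}{\torus\times\ball^\radius}$, for which $\velnorm{\zvel_2}\leq M_0$. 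Choosing $\radius_2\in[\radius_1,\infty)$ so large that
\[
\CFsvelnorm\bp{\CHsvelnorm+\CHsvortnorm+1+M_0}\,\radius^{-\varepsilon}\leq\tfrac{1}{2}\qquad\text{for all }\radius\geq\radius_2,
\]
we obtain $\velnorm{\zvel_\radius-\restriction{\uvel}{\torus\times\ball^\radius}}\leq\tfrac{1}{2}\velnorm{\zvel_\radius-\restriction{\uvel}{\torus\times\ball^\radius}}$, hence $\zvel_\radius=\restriction{\uvel}{\torus\times\ball^\radius}$ almost everywhere on $\torus\times\ball^\radius$.

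Finally, the inclusion $\zvel_\radius\in\calb_\radius$ then transfers to $\uvel$: $\vortnorm{\restriction{\uvel}{\torus\times\ball^\radius}}=\vortnorm{\zvel_\radius}\leq\norm{\zvel_\radius}_{\vortspace}\leq\CHsvelnorm+\CHsvortnorm+1$, and unravelling the definition of $\vortnorm{\cdot}$ gives
\[
\snorm{x}^{3/2}\e^{\frac{\wakefct{\Kconst x}}{1+\radius}}\snorm{\curl\proj\uvel(x)}\leq\CHsvelnorm+\CHsvortnorm+1,\qquad
\snorm{x}^{9/2-\varepsilon}\e^{\frac{\wakefct{\Kconst x}}{1+\radius}}\snorm{\curl\projcompl\uvel(t,x)}\leq\CHsvelnorm+\CHsvortnorm+1
\]
for a.e.\ $(t,x)\in\torus\times\ball^\radius$; since $\curl\uvel$ is continuous thanks to the regularity established in Lemma \ref{lem:NavierStokesTP_Regularity}, the two inequalities hold for every $t\in\torus$ and $x\in\ball^{\radius}$, which is the assertion. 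The step requiring the most care is the $\radius$-uniformity of the bound $M_0$, since one and the same threshold $\radius_2$ must serve all $\radius\geq\radius_2$ simultaneously; everything else is a direct combination of \eqref{est:Fs_contraction}, Corollary \ref{cor:ExistenceFixedPoint}, and the definition of $\calb_\radius$.
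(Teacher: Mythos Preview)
Your proof is correct and follows essentially the same approach as the paper: both identify $\restriction{\uvel}{\torus\times\ball^\radius}$ with the fixed point $\zvel_\radius$ from Corollary \ref{cor:ExistenceFixedPoint} via the contraction estimate \eqref{est:Fs_contraction} in $\velspace$, using that $\velnorm{\restriction{\uvel}{\torus\times\ball^\radius}}$ is bounded independently of $\radius$ by Theorem \ref{thm:VelocityDecay} (the paper's bound is the constant $\CVelocityDecay$, your $M_0$). Your additional remarks on the $\radius$-uniformity of $M_0$ and on passing from a.e.\ to pointwise bounds via continuity are more careful than the paper's argument, which leaves these points implicit.
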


\begin{proof}
For $\radius\geq2\radius_0$ we set $\Uvel_\radius\coloneqq\restriction{\uvel}{\torus\times\ball^\radius}$.
By Theorem \ref{thm:VelocityDecay} we know $\Uvel_\radius\in\velspace$ 
with $\velnorm{\Uvel}\leq \CVelocityDecay$, 
and by \eqref{eq:uFixedPoint_VortDecay} we have
$\Uvel_\radius=\calf_\radius(\Uvel_\radius)+\calh_\radius$ for any $\radius\geq 2\radius_0$. 
Now let $\radius\geq\radius_1$ and let $\zvel_\radius\in\calb_\radius$ be the function from 
Corollary \ref{cor:ExistenceFixedPoint}.
Then Lemma \ref{lem:Fs_velest} implies
\[
\begin{aligned}
\velnorm{\zvel_\radius-\Uvel_\radius}
=\velnorm{\calf_\radius(\zvel_\radius)-\calf_\radius(\Uvel_\radius)}
&\leq \CFsvelnorm\radius^{-\varepsilon}
\bp{\velnorm{\zvel_\radius}+\velnorm{\Uvel_\radius}}
\velnorm{\zvel_\radius-\Uvel_\radius}\\
&\leq \CFsvelnorm\radius^{-\varepsilon}
\bp{ \CHsvelnorm+ \CHsvortnorm+1+ \CVelocityDecay}
\velnorm{\zvel_\radius-\Uvel_\radius}.
\end{aligned}
\]
Choosing $\radius_2\in[\radius_1,\infty)$ such that for all $\radius\in[\radius_2,\infty)$ we have
\[
 \CFsvelnorm
\bp{ \CHsvelnorm+ \CHsvortnorm+1+ \CVelocityDecay}\radius^{-\varepsilon}
\leq\frac{1}{2},
\]
we conclude $\velnorm{\zvel_\radius-\Uvel_\radius}\leq\velnorm{\zvel_\radius-\Uvel_\radius}/2$
and hence $\velnorm{\zvel_\radius-\Uvel_\radius}=0$ for all $\radius\in[\radius_2,\infty)$.
This implies $\zvel_\radius=\Uvel_\radius=\restriction{\uvel}{\torus\times\ball^\radius}$.
In particular, we have
$\vortnorm{\restriction{\uvel}{\torus\times\ball^\radius}}=\vortnorm{\zvel_\radius}
\leq \CHsvelnorm+ \CHsvortnorm+1$ 
for all $\radius\in[\radius_2,\infty)$.
This completes the proof.
\end{proof}

Another application of the convolution formula \eqref{eq:RepresentationVorticity_CurluWedgeu_pp}
enables us to omit the term $\varepsilon$ in the estimate of $\curl\projcompl\uvel$,
which yields the estimates from Theorem \ref{thm:VorticityDecay} in the case $\Omega=\R^3$. 

\begin{thm}\label{thm:VorticityDecay_WholeSpace}
Let $\rey> 0$ and let $f\in\LR{q}(\torus\times\R^3)^3$ for all $q\in(1,\infty)$
have bounded support. 
Let $\uvel$ be a weak time-periodic solution to \eqref{sys:NavierStokesTP}
in the sense of Definition \ref{def:WeakSolution_NStp}, which satisfies
\eqref{el:VelocityAdditionalIntegrability}.
Then there exist constants $\CVorticityDecay>0$ and 
$\alpha=\alpha(\rey,\per)>0$ such that
the estimates
\eqref{est:VorticityDecay_ss} and 
\eqref{est:VorticityDecay_pp}
hold for all $(t,x)\in\torus\times\R^3$. 
\end{thm}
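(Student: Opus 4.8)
The plan is to combine the fixed-point result of Corollary \ref{cor:ExistenceFixedPoint} with the identification carried out in Lemma \ref{lem:SlowerDecay}, and then to run one further pass through the representation formula \eqref{eq:RepresentationVorticity_CurluWedgeu_pp}, which removes the superfluous factor $\snorm{x}^{\varepsilon}$ from the estimate of the purely periodic part and turns the exponential $\e^{-\wakefct{\Kconst x}/(1+\radius)}$ into the required $\e^{-\alpha\wakefct{x}}$.

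First I would fix $\radius=\radius_2$ and set $\alpha_0\coloneqq\Kconst/(1+\radius_2)>0$, noting that $\wakefct{\Kconst x}/(1+\radius_2)=\alpha_0\wakefct{x}$. By Lemma \ref{lem:SlowerDecay}, the restriction $\restriction{\uvel}{\torus\times\ball^{\radius_2}}$ coincides with the fixed point $\zvel_{\radius_2}\in\calb_{\radius_2}$, whence
\[
\snorm{\curl\proj\uvel(x)}\leq C\snorm{x}^{-3/2}\e^{-\alpha_0\wakefct{x}},\qquad
\snorm{\curl\projcompl\uvel(t,x)}\leq C\snorm{x}^{-9/2+\varepsilon}\e^{-\alpha_0\wakefct{x}}
\]
for all $t\in\torus$ and $\snorm{x}\geq\radius_2$. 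For $\snorm{x}\leq\radius_2$ the field $\curl\uvel$ is bounded by Lemma \ref{lem:NavierStokesTP_Regularity}, so both right-hand sides, which blow up at the origin, trivially dominate it there. Thus \eqref{est:VorticityDecay_ss} already holds with $\alpha=\alpha_0$, and it remains only to upgrade the exponent $9/2-\varepsilon$ to $9/2$ for $\curl\projcompl\uvel$.

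To that end I would feed into \eqref{eq:RepresentationVorticity_CurluWedgeu_pp} the decay just collected together with the velocity bounds \eqref{est:VelocityDecay_fct_ss}--\eqref{est:VelocityDecay_fct_pp}, splitting the spatial integration at $\snorm{y}=\radius_2$. On $\snorm{y}\leq\radius_2$ all factors are bounded by Lemma \ref{lem:NavierStokesTP_Regularity}, and this part, together with the $\projcompl f$-term, is treated exactly as in Lemma \ref{lem:Conv_fLp} via H\"older's inequality in time and $\snorm{x}\geq\thalf\wakefct{x}$; on $\snorm{y}\geq\radius_2$ the slowest-decaying term is $\curl\vvel\wedge\wvel$, which satisfies $\snorm{(\curl\vvel\wedge\wvel)(s,y)}\leq C\snorm{y}^{-9/2}\e^{-\alpha_0\wakefct{y}}$, whereas $\curl\wvel\wedge\vvel$ and $\projcompl(\curl\wvel\wedge\wvel)$ decay like $\snorm{y}^{-11/2+\varepsilon}\e^{-\alpha_0\wakefct{y}}$ and $\snorm{y}^{-15/2+\varepsilon}\e^{-\alpha_0\wakefct{y}}$, hence are negligible since $\varepsilon<1/4$. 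Applying \eqref{est:FundsolVortpp_grad} with $q=1$, $\gamma=1/4$ gives $\int_\torus\snorm{\grad\fundsolvortpp(t-s,x-y)}\,\ds\leq C\snorm{x-y}^{-5/2}\e^{-\Cexpfs\snorm{x-y}}$, so everything reduces to bounding the convolution of $\snorm{x-y}^{-5/2}\e^{-\Cexpfs\snorm{x-y}}$ with $(1+\snorm{y})^{-9/2}\e^{-\alpha_0\wakefct{y}}$.

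The only genuinely new computation is this last convolution. Setting $\alpha\coloneqq\min\set{\alpha_0,\Cexpfs/4}$ and using $\wakefct{x}\leq\wakefct{y}+\wakefct{x-y}\leq\wakefct{y}+2\snorm{x-y}$, one obtains $\e^{-\alpha\wakefct{y}}\leq\e^{-\alpha\wakefct{x}}\e^{2\alpha\snorm{x-y}}$, which pulls the anisotropic exponential out of the integral at the cost of replacing $\Cexpfs$ by $\Cexpfs-2\alpha>0$ in the isotropic one; Lemma \ref{lem:ConvolutionExp} (with $A=5/2<3$, $B=9/2$) then bounds the remaining integral by $C(1+\snorm{x})^{-9/2}$. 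Hence $\snorm{\curl\projcompl\uvel(t,x)}\leq C\snorm{x}^{-9/2}\e^{-\alpha\wakefct{x}}$ for $\snorm{x}\geq\radius_2$, and the range $\snorm{x}\leq\radius_2$ is again trivial by boundedness; this is \eqref{est:VorticityDecay_pp}. The main obstacle I anticipate is bookkeeping rather than conceptual: one must keep the rates $\alpha_0$, $\Cexpfs$, $\rey$ mutually compatible so that the wake exponential survives every manipulation, and absorb the near-field contribution cleanly — otherwise the argument is essentially ``one more iteration'' of the vorticity representation formula.
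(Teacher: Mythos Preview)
Your proposal is correct and follows essentially the same route as the paper: invoke Lemma \ref{lem:SlowerDecay} to obtain the preliminary decay with exponent $9/2-\varepsilon$ and rate $\alpha_0=\Kconst/(1+\radius_2)$, then run the representation formula \eqref{eq:RepresentationVorticity_CurluWedgeu_pp} once more, bounding $\grad\fundsolvortpp$ via \eqref{est:FundsolVortpp_grad} and closing with Lemma \ref{lem:ConvolutionExp}. Your exponential manipulation via $\wakefct{x}\leq\wakefct{y}+2\snorm{x-y}$ is a slight rephrasing of the paper's use of \eqref{est:Kconst}, and since $\Kconst\leq\Cexpfs/4$ your $\min\set{\alpha_0,\Cexpfs/4}$ in fact equals $\alpha_0$, matching the paper's constant.
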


\begin{proof}
We decompose $\uvel=\vvel+\wvel$ into steady-state part $\vvel\coloneqq\proj\uvel$ 
and purely periodic part $\wvel\coloneqq\projcompl\uvel$. 
Since $\curl\uvel$ is bounded 
by Theorem \ref{thm:VelocityDecay}, 
Lemma \ref{lem:SlowerDecay} implies
\begin{equation}\label{est:VorticityDecay_BadEst}
\begin{aligned}
\snorm{\curl\vvel(x)}
&\leq C 
\np{1+\snorm{x}}^{-3/2}\e^{-\alpha\wakefct{x}}, \\
\snorm{\curl\wvel(t,x)}
&\leq C 
\np{1+\snorm{x}}^{-9/2+\varepsilon}\e^{-\alpha\wakefct{x}}
\end{aligned}
\end{equation}
for all $(t,x)\in\torus\times\R^3$, where $\alpha=\np{1+\radius_2}^{-1}\Kconst$.
In particular, this implies \eqref{est:VorticityDecay_ss},
and for \eqref{est:VorticityDecay_pp} 
it remains to remove $\varepsilon$ in the second inequality.
Due to Theorem \ref{thm:VelocityDecay}, 
the estimates \eqref{est:VorticityDecay_BadEst} further yield
\[
\snorml{
\curl\vvel(y)\wedge\wvel(s,y) 
+\curl\wvel(s,y)\wedge\vvel(y)
+\projcompl\nb{\curl\wvel\wedge\wvel}(s,y)
}
\leq  C \np{1+\snorm{y}}^{-9/2}\e^{-\alpha\wakefct{\rey y}}
\]
for all $\np{t,x}\in\torus\times\R^3$.
Moreover, by Theorem \ref{thm:FundsolVortpp_est} we have
\[
\int_\torus\snorm{\grad\fundsolvortpp(t-s,x-y)}\,\ds
\leq C \snorm{x-y}^{-5/2} \e^{- \Cexpfs\snorm{x-y}}.
\]
Using these estimates and \eqref{est:Conv_fLp_vortpp}
in the representation formula
\eqref{eq:RepresentationVorticity_CurluWedgeu_pp}, we conclude
\[
\snorm{\curl\wvel(t,x)}
\leq C\snorm{x}^{-9/2}\e^{-\frac{\Cexpfs\snorm{x}}{2\np{1+\radius_0}}}
+C \int_{\R^3}\snorm{x-y}^{-5/2} \e^{- \Cexpfs\snorm{x-y}}
\np{1+\snorm{y}}^{-9/2}\e^{-\alpha\wakefct{\rey y}}\,\dy.
\]
Due to $2\wakefct{\Kconst x}\leq \Cexpfs\snorm{x}$, we have 
\[
\half \Cexpfs\snorm{x-y}+\alpha\wakefct{\rey y}
\geq\wakefct{\Kconst\np{x-y}}+\frac{\wakefct{\Kconst y}}{1+\radius_2}
\geq\frac{\wakefct{\Kconst x}}{1+\radius_2}
=\alpha\wakefct{x},
\]
and we obtain
\[
\begin{aligned}
\snorm{\curl\wvel(t,x)}
&\leq 
C\np{1+\snorm{x}}^{-9/2}\e^{-\alpha\wakefct{x}}\\
&\qquad+C \e^{-\alpha\wakefct{x}}\int_{\R^3}\snorm{x-y}^{-5/2} 
\e^{- \Cexpfs\snorm{x-y}/2}\np{1+\snorm{y}}^{-9/2}\,\dy,
\end{aligned}
\]
where we used \eqref{est:Kconst}.
We estimate the remaining integral with Lemma \ref{lem:ConvolutionExp},
which leads to
\eqref{est:VorticityDecay_pp} 
and completes the proof.
\end{proof}

Finally, we employ a classical cut-off argument
to extend the result to an exterior domain and to
finish the proof of Theorem \ref{thm:VorticityDecay}.

\begin{proof}[Proof of Theorem \ref{thm:VorticityDecay}]
First of all, Lemma \ref{lem:NavierStokesTP_Regularity}
implies the existence of a pressure field $\upres$
such that $\np{\uvel,\upres}$ is a strong solution 
to \eqref{sys:NavierStokesTP_ExteriorDomain}
satisfying 
\eqref{el:NavierStokesTP_Regularity_ss}--\eqref{el:NavierStokesTP_Regularity_pressure}.
Fix radii $R>r>0$ such that $\partial\Omega\subset\ball_r$,
and let $\cutoff\in\CRi(\R^3)$ be a cut-off function such that
$\cutoff(x)=0$ for $\snorm{x}\leq r$ and 
$\cutoff(x)=1$ for $\snorm{x}\geq R$.
By the divergence theorem, $\Div\uvel=0$ and \eqref{cond:BoundaryData},
we have
\[
\begin{aligned}
\int_{\ball_{r,R}}\uvel\cdot\grad\cutoff\,\dx
&=\int_{\ball_R} \Div\bp{\uvel\np{\cutoff-1}}\,\dx
=-\int_{\partial\Omega}\uvel\cdot\nvec\,\dS
=0.
\end{aligned}
\]
Therefore, there exists a function $\Vvel$ with
$\Vvel\in\WSR{1,2}{q}(\torus\times\R^3)^3$ for all $q\in(1,\infty)$ 
and $\supp\Vvel\subset\torus\times\ball_{r,R}$
such that
$\Div\Vvel=\uvel\cdot\grad\cutoff$;
see \cite[Section III.3]{GaldiBookNew} for example.
We define $\Uvel\coloneqq\cutoff\uvel-\Vvel$ and
$\Upres\coloneqq\cutoff\upres$.
Then $\Uvel\in\LR{r}(\torus\times\R^3)$ for some $r\in(5,\infty)$
and $\Uvel$ is a weak solution to
\begin{equation}\label{sys:NavierStokesTP_CutOff}
\begin{pdeq}
\partial_t\Uvel-\Delta\Uvel-\rey\partial_1\Uvel+\Uvel\cdot\grad\Uvel+\grad\Upres&=F
&&\tin\torus\times\R^3, \\
\Div\Uvel&=0 
&&\tin\torus\times\R^3, \\
\lim_{\snorm{x}\to\infty}\Uvel(t,x)&=0 
&&\tfor t\in\torus,
\end{pdeq}
\end{equation}
in the sense of Definition \ref{def:WeakSolution_NStp},
where $F\in\LR{q}(\torus\times\R^3)^3$ for all $q\in(1,\infty)$,
and $\supp F$ is compact. 
Now the assertion follows from Theorem \ref{thm:VorticityDecay_WholeSpace}
and the identity $\Uvel(t,x)=\uvel(t,x)$ for $\snorm{x}>R$.
\end{proof}

%%%%%%%%%%%%%%%%%%%%%%%%%%%%%%%%%%%%%%%%%%%%%%%%%%%%%%%%%%%%%%
%%          Bibliography                                    %%
%%%%%%%%%%%%%%%%%%%%%%%%%%%%%%%%%%%%%%%%%%%%%%%%%%%%%%%%%%%%%%
%\bibliographystyle{abbrv}
%\bibliography{literature}

%%%%%%%%%%%%%%%%%%%%%%%%%%%%%%%%%%%%%%%%%%%%%%%%%%%%%%%%%%%%%%
%%          Authors' Data                                   %%
%%%%%%%%%%%%%%%%%%%%%%%%%%%%%%%%%%%%%%%%%%%%%%%%%%%%%%%%%%%%%%
\smallskip\par\noindent
Weierstrass Institute for Applied Analysis and Stochastics\\ 
Mohrenstra\ss{}e 39, 10117 Berlin, Germany\\
Email: {\texttt{eiter@wias-berlin.de}}
\medskip\smallskip\par\noindent
Department of Mechanical Engineering and Materials Science\\
University of Pittsburgh\\
Pittsburgh, PA 15261, USA\\
Email: {\texttt{galdi@pitt.edu}}

\end{document}